\documentclass[12pt,letterpaper]{amsart}

\allowdisplaybreaks
\usepackage{mathpazo}
\parskip=5pt


\linespread{1.1}




\setlength{\headsep}{20pt}

\usepackage{amsmath,amsfonts,amssymb}
\usepackage{amsrefs}
\usepackage{amsthm}
\usepackage{latexsym,amsmath,amssymb,amsfonts}
\usepackage{rotating}
\usepackage{mathrsfs}
\usepackage{xypic} \xyoption{all}
\usepackage{amscd}
\usepackage{hyperref} 
\usepackage{euscript}
\usepackage{hhline}
\usepackage{graphicx,epstopdf}
\usepackage{epsfig}
\usepackage{xcolor}
\usepackage{textcomp}
\usepackage[all,color]{xy}

\newlength{\fighskip} \fighskip=2pt
\newlength{\figvskip} \figvskip=3pt

\usepackage{hyperref}
\newcommand*{\figbox}[2]{{
  \def\figscale{#1}
  \def\arraystretch{0.8}
  \arraycolsep=0pt
  \begin{array}{c}
    \vbox{\vskip\figscale\figvskip
      \hbox{\hskip\figscale\fighskip
        \includegraphics[scale=\figscale]{#2}}}
  \end{array}}}

\topmargin= -5mm
\textheight 45\baselineskip
\advance\textheight by \topskip
\oddsidemargin 0 in      
\evensidemargin 0 in
\marginparwidth 0.75 in
\textwidth 6.5 true in 

\numberwithin{equation}{section}

\setcounter{tocdepth}{2}


\newcommand{\C}{\mathbb{C}}

\newcommand{\Z}{\mathbb{Z}}


\newcommand{\W}{\mathcal{W}}


\DeclareMathOperator{\End}{\text{End}}

\DeclareMathOperator{\Sym}{\text{Sym}}

\theoremstyle{plain}
\newtheorem{thm}{Theorem}[section]
\newtheorem{thm-defn}{Theorem/Definition}[section]
\newtheorem{lem}[thm]{Lemma}
\newtheorem{question}[thm]{Question}
\newtheorem{lem-defn}[thm]{Lemma/Definition}
\newtheorem{prop}[thm]{Proposition}
\newtheorem{cor}[thm]{Corollary}

\theoremstyle{definition}
\newtheorem{defn}[thm]{Definition}
\newtheorem{notn}[thm]{Notation}
\newtheorem{eg}[thm]{Example}

\theoremstyle{remark}
\newtheorem{rmk}[thm]{Remark}

\allowdisplaybreaks[4]  

\begin{document}
\title[Geometric representations of the BT quantization]{A geometric construction of representations of the Berezin-Toeplitz quantization}

\author[Chan]{Kwokwai Chan}
\address{Department of Mathematics\\ The Chinese University of Hong Kong\\ Shatin\\ Hong Kong}
\email{kwchan@math.cuhk.edu.hk}

\author[Leung]{Naichung Conan Leung}
\address{The Institute of Mathematical Sciences and Department of Mathematics\\ The Chinese University of Hong Kong\\ Shatin \\ Hong Kong}
\email{leung@math.cuhk.edu.hk}

\author[Li]{Qin Li}
\address{Shenzhen Institute for Quantum Science and Engineering\\ Southern University of Science and Technology\\ Shenzhen\\ China}
\email{liqin@sustech.edu.cn}


\subjclass[2010]{53D55 (58J20, 81T15, 81Q30)}
\keywords{Deformation quantization, geometric quantization, Berezin-Toeplitz star product, Toeplitz operator, peak section}
\thanks{}

\begin{abstract}
	For a K\"ahler manifold $X$ equipped with a prequantum line bundle $L$, we give a geometric construction of a family of representations of the Berezin-Toeplitz deformation quantization algebra $(C^\infty(X)[[\hbar]],\star_{BT})$ parametrized by points $z_0 \in X$. The key idea is to use peak sections to suitably localize the Hilbert spaces $H^{0}\left( X,L^{\otimes m}\right) $ around $z_{0}$ in the large volume limit.  
\end{abstract}

\maketitle


\section{Introduction}

Quantization plays important roles in both physics and in mathematics. Two outstanding approaches are the deformation quantization (\cite{DQ-I, DQ-II, Fed, Bordemann-Meinrenken, Ma-Ma, Kontsevich-DQ}) and geometric quantization \cite{Kirillov, Kostant, Souriau, Sniatycki, Woodhouse},\footnote{These lists of references are certainly not meant to be exhaustive.} which provide mathematical descriptions of the algebra of quantum observables and the Hilbert space associated to a quantum mechanical system respectively. This paper is an attempt to understand the intriguing relationship between these two quantization schemes. More precisely, we will construct Hilbert space representations of deformation quantization algebras. In particular, this gives an answer to an open problem in \cite[Sect. 9]{Bordemann-Waldmann}. 

To begin with,  let us consider a symplectic vector space $X = \mathbb{R}^{2n}$ equipped with the standard symplectic form $\omega =\sum_{j = 1}^n dx_{j}\wedge dy_{j}$. A complex polarization (i.e., a complex structure) identifies $X$ with $\mathbb{C}^n$ with coordinates $z_{j}=x_{j}+\sqrt{-1}y_{j}$'s. Geometric quantization of $\C^n$ gives the {\em Bargmann-Fock space} $\mathcal{H}L^2(\mathbb{C}^n,\mu_\hbar)$ consisting of $L^2$ integrable entire holomorphic functions with respect to the density $\mu_\hbar(z) := (\pi\hbar)^{-n}e^{-|z|^2/\hbar}$.
A smooth function $f = f(z, \bar{z}) \in C^{\infty}(X)$ acts on $\mathcal{H}L^2(\mathbb{C}^n,\mu_\hbar)$ as a Toeplitz operator $T_f$ defined by $T_f=\Pi\circ m_f$, where $\Pi$ is the orthogonal projection of smooth functions to $\mathcal{H}L^2(\C^n,\mu_\hbar)$ and $m_f$ is the multiplication by $f$. Typical examples are $T_{z_j}=m_{z_j}$ and $T_{\bar{z}_j}=\hbar\frac{\partial}{\partial{z_j}}$. Composition of these operators defines a {\em star product} via $T_f\circ T_g=T_{f\star g}$. This endows $C^{\infty}(X)[[\hbar]]$ with a noncommutative algebra structure, or a deformation quantization of $(X,\omega)$, and $\mathcal{H}L^2(\mathbb{C}^n,\mu_\hbar)$ is naturally its representation. An algebraic formulation of this deformation quantization and geometric quantization on $\C^n$ is given by the Wick algebra and its holomorphic Bargmann-Fock representation:
\begin{defn}\label{definition: Wick-algebra}
	The {\em Wick algebra} is $\mathcal{W}_{\mathbb{C}^n}:=\mathbb{C}[[y,\bar{y}]][[\hbar]]$ equipped with the multiplication: 
	\begin{equation}\label{equation: defn-Wick-product}
	f\star g:=\exp\left(-\hbar\sum_{i=1}^n\frac{\partial}{\partial y_i}\frac{\partial}{\partial \bar{y}'_i}\right)(f(y,\bar{y})g(y',\bar{y}'))|_{y=y'}.
	\end{equation}
\end{defn}

On the holomorphic Bargmann-Fock space 
$$\mathcal{F}_{\mathbb{C}^n}:=\mathbb{C}[[y_1,\cdots,y_n]][[\hbar]],$$
the Toeplitz operator associated to a monomial  $y_{i_1}\cdots y_{i_k}\bar{y}_{j_1}\cdots \bar{y}_{j_l}$ is the differential operator acting on $s\in\mathcal{F}_{\mathbb{C}^n}$ as
\begin{equation}\label{equation: representation-Wick-on-Fock}
(y_{i_1}\cdots y_{i_k}\bar{y}_{j_1}\cdots \bar{y}_{j_l})\circledast s:= \left(\hbar\cdot\frac{\partial}{\partial y_{j_1}}\right)\circ\cdots\circ\left(\hbar\cdot\frac{\partial}{\partial y_{j_l}}\right)\circ m_{y_{i_1}\cdots y_{i_k}}(s).
\end{equation}
In other words, holomorphic polynomials are mapped to {\em creation} operators and anti-holomorphic ones are mapped to {\em annihilation} operators. This assignment is also commonly known as the {\em Wick normal ordering}.

We also have the {\em anti-holomorphic} Bargmann-Fock representation of $\W_{\C^n}$ on $\bar{\mathcal{F}}_{\C^n}:=\mathbb{C}[[\bar{y}_1,\cdots,\bar{y}_n]][[\hbar]]$, where the operator associated to a monomial is given by
 \begin{equation}\label{equation: anti-holomorphic-Fock}
 (y_{i_1}\cdots y_{i_k}\bar{y}_{j_1}\cdots \bar{y}_{j_l})\bar{\circledast} s:=  m_{\bar{y}_{j_1}\cdots \bar{y}_{j_l}}\circ \left(-\hbar\cdot\frac{\partial}{\partial \bar{y}_{i_1}}\right)\circ\cdots\circ\left(-\hbar\cdot\frac{\partial}{\partial \bar{y}_{i_k}}\right)(s),
 \end{equation}
namely, holomorphic polynomials are now mapped to annihilation operators while anti-holomorphic ones give creation operators.

In \cite{Bordemann-Waldmann}, Bordemann and Waldmann showed that the representation $\bar{\mathcal{F}}_{\C^n}$ is isomorphic to the {\em GNS representation} of $\mathcal{W}_{\C^n}$. Let us briefly recall the construction of GNS states: the delta functional
$$\delta: \mathcal{W}_{\C^n}\rightarrow\C[[\hbar]]$$
defined by taking the constant term induces the {\em Gelfand ideal} 
$$
\mathcal{J}:=\{\alpha\in\mathcal{W}_{\C^n}: \delta(\bar{\alpha}\star\alpha)=0\}.
$$
Then the quotient $\mathcal{W}_{\C^n}/\mathcal{J}$ is naturally a representation of $\mathcal{W}_{\C^n}$ via left multiplication. It was shown in \cite[Proposition 7]{Bordemann-Waldmann} that $\mathcal{W}_{\C^n}/\mathcal{J}\cong\bar{\mathcal{F}}_{\C^n}$ as representations of $\W_{\C^n}$. 

It is a more interesting and difficult problem to find representations of deformation quantization algebras for general K\"ahler manifolds. Bordemann and Waldmann generalized their construction of the GNS representation (or the anti-holomorphic Bargmann-Fock representation) to an arbitrary K\"ahler manifold $X$ by using their previous construction of Wick type Fedosov star products \cite{Bordemann}. They obtained a family of GNS representations parametrized by points in $X$, and raised the following interesting question which motivates the work in this paper:
\begin{question}[problem iv in Sect. 9 in \cite{Bordemann-Waldmann}]\label{question: GNS-prequantum}
Are the prequantum line bundles of geometric quantization over a compact prequantizable K\"ahler manifold related to the GNS construction?
\end{question}

For the flat space $X=\C^n$, we have seen that the Toeplitz operators and the GNS construction correspond to ``conjugate'' representations of $\mathcal{W}_{\C^n}$ defined in \eqref{equation: representation-Wick-on-Fock} and \eqref{equation: anti-holomorphic-Fock} respectively. Note that these two representations are {\em not isomorphic}, since holomorphic polynomials act on $\mathcal{F}_{\C^n}$ and $\bar{\mathcal{F}}_{\C^n}$ as creators and annihilators respectively.
In this paper, we will see that it is actually the generalization of the {\em holomorphic} Bargmann-Fock representation to prequantizable K\"ahler manifolds which encode the geometry of prequantum line bundles.


On such a K\"ahler manifold $X$, there exists a prequantum line bundle $L$ whose curvature $F_L$ satisfies $\frac{\sqrt{-1}}{2\pi} F_L = \omega$. Geometric quantization of $(X, m\omega)$ produces the Hilbert space $H^0(X, L^{\otimes m})$, the space of holomorphic sections of $L^{\otimes m}$. To a smooth function $f\in C^{\infty}(X)$, we can, as in the flat case, associate the Toeplitz operator
\begin{equation*}
T_{f,m}:=\Pi_m\circ m_f: H^0(X,L^{\otimes m})\rightarrow H^0(X,L^{\otimes m}),
\end{equation*}
where $m_f$ is multiplication by $f$ and $\Pi_m$ is the orthogonal projection from the space of $L^2$ sections $L^2(X,L^{\otimes m})$ to $H^0(X,L^{\otimes m})$. 

An important result in {\em Berezin-Toeplitz quantization} is that this gives rise to a star product $\star_{BT}$, and hence the deformation quantization algebra $(C^\infty(X)[[\hbar]],\star_{BT})$ \cites{Bordemann-Meinrenken, Schlichenmaier, Karabegov}:
$$
f\star_{BT}g := \sum_{i\geq 0}\hbar^iC_i(f,g),
$$
such that the following estimates hold:
\begin{equation}\label{equation: asymptotic-expansion-composition-Toeplitz-operators}
||T_{f,m}\circ T_{g,m}-\sum_{i=0}^{N-1}\left(\frac{1}{m}\right)^iT_{C_i(f,g),m}||\leq K_N(f,g)\left(\frac{1}{m}\right)^N;
\end{equation}
here $C_i(-,-)$ are bi-differential operators, $||\cdot||$ is the operator norm, and $K_N(f,g)$ is a constant independent of $m$.
Unlike the flat case, however, the estimate \eqref{equation: asymptotic-expansion-composition-Toeplitz-operators} says that the difference
$$T_{f, m} \circ T_{g, m} - T_{f\star_{BT} g, m}$$
is only {\em asymptotically} zero when $m$ tends to infinity. So $(C^\infty(X)[[\hbar]],\star_{BT})$ does not quite act on $H^{0}( X,L^{\otimes m})$. In fact we do not even expect a representation of $(C^\infty(X)[[\hbar]],\star_{BT})$ on the product $\prod_{m}H^{0}( X,L^{\otimes m})$.

On the other hand, as $m \to \infty$, we are scaling $X$ to a {\em large volume limit}. Physically speaking, we would expect the physical system around any given point $z_{0}\in X$ to behave like one on a flat space. We are going to see that this is indeed the case. To be more precise, we will use \textit{peak sections} $S_{m,p,r} \in H^{0}\left( X,L^{\otimes m}\right)$ \cite{Tian} to appropriately localize the Hilbert spaces around $z_{0}$ and produce a representation $H_{z_{0}}$ of the Berezin-Toeplitz deformation quantization algebra $(C^\infty(X)[[\hbar]],\star_{BT})$. 

In a suitably chosen coordinates (and frame of $L$) around $z_{0}$, $S_{m,p,r}$ is equal to the monomial $z_{1}^{p_{1}}\cdot \cdot \cdot z_{n}^{p_{n}}$ up to order $2r-1$. Because of the error terms, the peak sections in $\prod_{m}H^{0}( X,L^{\otimes m})$ with a fixed $r$ behave in a compatible way with the actions of the operators $T_{z_j}=m_{z_{j}}$ and $T_{\bar{z}_j} = \hbar\frac{d}{dz_{j}}$ around $z_{0}$ only up to order $2r-1$, which is {\em not} enough to produce a representation of $(C^\infty(X)[[\hbar]],\star_{BT})$. To construct our representation $H_{z_{0}}$, we need to find a clever way to increase the order $r$ of Taylor expansions at $z_{0}$ to infinity when the choices of the peak sections $S_{m,p,r}$ are changing with $r$.  

To achieve this, we consider a sum
$$\sum_{m}\alpha _{m,r}\in \prod_{m} H^{0}(X,L^{\otimes m}), $$
or a {\em double sequence} $\left\{ \alpha_{m,r}\right\} $ of peak sections for {\em different} tensor powers $L^{\otimes m}$'s, for which we are only keeping track of the Taylor expansion around $z_{0}$ {\em up to order $2r-1$}. A key observation is that, using asymptotics of inner products of peak sections, we can show that $\left\{ \alpha_{m,r}\right\} $ defines an element in $\prod_{m}H^{0}(X,L^{\otimes m})$ with more and more terms of their Taylor expansions around $z_{0}$ becoming identical if the following condition holds:
there exists a sequence of complex numbers $\{a_{p,k}\}_{p,k\geq 0}$ such that, for each fixed $r>0$,  we have the estimates
\begin{equation}\label{equation: intro-asymptotic-general-Kahler}
\langle\alpha_{m,r}-\sum_{2k+|p|\leq r}a_{p,k}\cdot\frac{1}{m^k}\cdot S_{m,p,r+1},\quad S_{m,q,r+1}\rangle_m=O\left(\frac{1}{m^{r+1}}\right),
\end{equation}
for any multi-index $q$ with $|q|\leq r$.

We call such $\left\{ \alpha _{m,r}\right\} $ an {\em admissible sequence at $z_{0}$} (see Definition \ref{definition: asymptotic_sequence}). They span a linear subspace $V_{z_{0}}\subset \prod_{r}\left(\prod_{m} H^0(X,L^{\otimes m})\right)$. In fact, the coefficients $\left\{ a_{p,k}\right\} $ of $\left\{ \alpha _{m,r}\right\} $ would already record the
whole Taylor expansions at $z_{0}$, and this defines an equivalence relation $\sim$ on $V_{z_{0}}$. The desired Hilbert space can then be constructed as the sub-quotient
$$
H_{z_{0}} := V_{z_{0}}/\sim.
$$
\begin{thm}[=Theorem \ref{theorem: representation-Berezin-Toeplitz-on-asymptotic-sequences}]\label{theorem: intro_representation}
	The vector space $H_{z_0}$ is a representation of the Berezin-Toeplitz deformation quantization algebra $(C^\infty(X)[[\hbar]],\star_{BT})$. 
\end{thm}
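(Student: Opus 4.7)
The strategy is to define the action of $f \in C^\infty(X)[[\hbar]]$ on an admissible sequence $\alpha = \{\alpha_{m,r}\}$ by applying the Toeplitz operator $T_{f,m}$ at each level of the sequence, after absorbing the finite loss of Taylor accuracy into a shift of the $r$-parameter. Concretely, I will set
\[
(f \cdot \alpha)_{m,r} := T_{f,m}\bigl(\alpha_{m,\, r+N(f,r)}\bigr),
\]
where $f$ is truncated in $\hbar$ at order $r$ and then evaluated at $\hbar=1/m$, and $N(f,r)$ is chosen large enough to accommodate the maximal order of differentiation in the bi-differential operators $C_i(f,\cdot)$ appearing up to this order. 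The first and most substantive task is to show that $\{(f\cdot \alpha)_{m,r}\}$ is again admissible at $z_0$ in the sense of \eqref{equation: intro-asymptotic-general-Kahler}, with new coefficients $\{b_{p,k}\}$ determined explicitly by $f$ and $\{a_{p,k}\}$.

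For this admissibility check, the key identity is $\langle T_{f,m}\sigma,\tau\rangle_m = \langle \sigma, m_{\bar f}\tau\rangle_m$, which allows me to rewrite
\[
\Bigl\langle T_{f,m}\alpha_{m,r+N} - \sum_{2k+|p|\le r} b_{p,k}\tfrac{1}{m^k}S_{m,p,r+1},\; S_{m,q,r+1}\Bigr\rangle_m
\]
in terms of pairings of $\alpha_{m,r+N}$ against products $\bar f\cdot S_{m,q,r+1}$. Using the fact that $S_{m,p,r}$ agrees with the monomial $z^p$ up to order $2r-1$ in suitable normal coordinates and frame, the product $\bar f\cdot S_{m,q,r+1}$ decomposes into a finite combination of peak sections $S_{m,p',r'}$ with coefficients read off from the Taylor jet of $\bar f$ at $z_0$, plus an error controlled by the standard peak-section estimates. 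Combining this with the admissibility of $\alpha$ and the known large-$m$ asymptotics of the inner products $\langle S_{m,p,r'}, S_{m,q,r+1}\rangle_m$ yields a specific formula for $\{b_{p,k}\}$ in terms of $\{a_{p,k}\}$ and delivers the required $O(1/m^{r+1})$ bound.

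Because the output coefficients $\{b_{p,k}\}$ depend only on $\{a_{p,k}\}$ and $f$, and not on the particular representative $\alpha$, the action descends to the quotient $H_{z_0} = V_{z_0}/\sim$. To verify the representation property $(f\star_{BT} g)\cdot [\alpha] = f\cdot(g\cdot [\alpha])$, I will invoke the defining estimate \eqref{equation: asymptotic-expansion-composition-Toeplitz-operators}: truncating $f\star_{BT} g$ at order $r$ in $\hbar$ and choosing the shift $N$ sufficiently large, that estimate yields
\[
\bigl\| T_{f,m}\,T_{g,m}\,\alpha_{m,r+N} - T_{f\star_{BT} g,\,m}\,\alpha_{m,r+N}\bigr\|_m = O(1/m^{r+1}),
\]
which pairs against the peak sections $S_{m,q,r+1}$ to give an error dwarfed by the leading-order coefficient asymptotics. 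Hence both sides produce the same coefficients $\{b_{p,k}\}$ and represent the same class in $H_{z_0}$.

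I expect the main obstacle to be the admissibility step of the second paragraph: the peak-section inner products admit full $1/m$ asymptotic expansions whose subleading terms must be tracked accurately, so that contributions from higher-order Taylor coefficients of $f$ combine with the $1/m^k$-expansion of $\alpha$ to yield precisely $O(1/m^{r+1})$ rather than a weaker bound. Organizing this bookkeeping, most plausibly by exploiting the Wick-algebra action \eqref{equation: representation-Wick-on-Fock} as a local model at $z_0$, is the heart of the argument; once it is in place, well-definedness and the representation identity follow cleanly from the Berezin--Toeplitz asymptotics.
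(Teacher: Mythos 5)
Your proposal follows essentially the same route as the paper: the action is defined levelwise by $T_{f,m}$, the substantive step is showing that admissibility (and equivalence) is preserved, and the algebra relation is then read off from the operator-norm estimate \eqref{equation: asymptotic-expansion-composition-Toeplitz-operators} exactly as in the paper's Lemma \ref{lemma: asymptotitc-operator-asymptotic-sequence} and Theorem \ref{theorem: representation-Berezin-Toeplitz-on-asymptotic-sequences}. Two remarks: the shift $N(f,r)$ is unnecessary, since the admissibility condition \eqref{equation: asymptotic-general-Kahler} is imposed for each fixed $r$ separately and is preserved at the same level (and in any case the coefficients $a_{p,k}$ are independent of $r$); and the ``bookkeeping'' you defer --- which is indeed the heart of the matter, since $\bar f\cdot S_{m,q,r+1}$ is not holomorphic and cannot literally be expanded in peak sections --- is exactly what the paper carries out via the near/far splitting using \eqref{equation: norm-peak-section-outside-disk} together with the Feynman--Laplace formal-integral machinery of Section \ref{section:perturbation-Segal-Bargmann} (Theorems \ref{theorem: Toeplitz-expression} and \ref{theorem: asymptotics-local}), which is the Wick-algebra local model you anticipate.
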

One main technical tool we use is the formal integrals defined via the Feynman-Laplace expansions of oscillatory integrals. This technique was also applied in earlier studies of deformation quantization on K\"ahler manifolds \cites{Reshetikhin-Takhtajan, Karabegov19}. In our construction, the formal integrals arise naturally from the geometry of prequantum line bundles, with which we define an inner product on $H_{z_0}$ (with values in $\C[[\hbar]]$) and show that the action of $C^\infty(X)[[\hbar]]$ on $H_{z_0}$ is exactly by the formal Toeplitz operators with respect to this inner product.  

We will prove that our representation possesses various nice properties, as expected from the physical point of view. First of all, Theorem \ref{proposition: explicit-formula-asymptotic-representation} says that it is {\em local}, namely, for any smooth function $f \in C^\infty(X)$, the action of the Toeplitz operator $T_f$ on $H_{z_0}$ depends only on the infinite jets of $f$ at $z_0$. Also, for every real-valued function $f$, we will show that the operator $T_f$ on $H_{z_0}$ is self-adjoint in Proposition \ref{prop:self-adjoint}. Last but not the least, the representation is irreducible in a suitable sense, as we will see in Theorem \ref{proposition: weak-irreducibility}.

In a sequel to this paper \cite{CLL-PartII}, we extend Fedosov's quantization approach to construct a module sheaf over the sheaf of algebras of smooth formal functions under the Berezin-Toeplitz star product. The Hilbert space $H_{z_0}$ will be seen as a subspace of the stalk of this module sheaf at $z_0$ and we show that the $H_{z_0}$'s are related to each other via parallel transport by a Fedosov flat connection. In particular, the action of a smooth formal function $f$ on $H_{z_0}$ is the fiberwise holomorphic Bargmann-Fock action of the Taylor-Fedosov series of $f$ on $\W_{z_0}:=\widehat{\Sym} (T^*X_{z_0})[[\hbar]]$.

\subsection*{Acknowledgement}
\

We thank Si Li and Siye Wu for useful discussions. The first named author thanks Martin Schlichenmaier and Siye Wu for inviting him to attend the conference GEOQUANT 2019 held in September 2019 in Taiwan, in which he had stimulating and very helpful discussions with both of them as well as Jørgen Ellegaard Andersen, Motohico Mulase, Georgiy Sharygin and Steve Zelditch.

K. Chan was supported by grants of the Hong Kong Research Grants Council (Project No. CUHK14302617 \& CUHK14303019) and direct grants from CUHK.
N. C. Leung was supported by grants of the Hong Kong Research Grants Council (Project No. CUHK14301117 \& CUHK14303518) and direct grants from CUHK.
Q. Li was supported by grants from National Science Foundation of China (Project No. 12071204), and Guangdong Basic and Applied Basic Research Foundation (Project No. 2020A1515011220).

\section{The Feynman-Laplace Theorem and perturbations of the Bargmann-Fock space}\label{section:perturbation-Segal-Bargmann}

In this section, we perform local computations which will be needed for proving Theorem \ref{theorem: intro_representation}. We will first introduce an extension of the Wick algebra by allowing negative powers of $\hbar$, and later consider a perturbation of the holomorphic Bargmann-Fock representation.

Throughout this paper, we will use the following notation for multi-indices: let $I=(i_1,\cdots, i_n)$ and $J=(j_1,\cdots,j_m)$ then we set
$$
y^I := y_1^{i_1}\cdots y_n^{i_n},\quad \bar{y}^J := \bar{y}_1^{j_1}\cdots\bar{y}_n^{j_m}. 
$$
We also use the notations: $|I| := i_1+\cdots+i_n$ and $I! := i_1!\cdots i_n!$.

Let $I,J$ be multi-indices. We assign a $\Z$-grading on $\mathcal{W}_{\mathbb{C}^n}$ by letting the monomial $\hbar^k y^I\bar{y}^J$ to have degree $2k+|I|+|J|$. There is an associated decreasing filtration on $\W_{\mathbb{C}^n}$ given by the set $(\W_{\mathbb{C}^n})_k$ of power series in $\W_{\mathbb{C}}$ whose terms are all of degree $\geq k$. In a similar way, we can define a grading and filtration on both $\mathcal{F}_{\mathbb{C}^n}$ and $\mathbb{C}[[\hbar]]$. Note that this grading is preserved by both the Wick product and the Bargmann-Fock action. 

\begin{defn}\label{defn:extension-Wick-algebra}
	The {\em extended Wick algebra} $\mathcal{W}_{\mathbb{C}^n}^+$ is defined as follows:
	\begin{itemize}
		\item Elements of $\mathcal{W}_{\mathbb{C}^n}^+$ are given by power series, possibly with negative powers of $\hbar$,
		\item The degrees of terms of an element $U\in\mathcal{W}_{\mathbb{C}^n}^+$ have a uniform lower bound which could be negative; equivalently, there exists $k\geq 0$ so that every term in $\hbar^k\cdot U$ has non-negative degree. 
		\item For an element $U\in\mathcal{W}_{\mathbb{C}^n}^+$, there exists a finite number of terms for any given nonnegative total degree.
	\end{itemize}
\end{defn}
\begin{rmk}
	The definition of $\mathcal{W}_{\mathbb{C}^n}^+$ here is {\em different} from the one in \cite[p. 224]{Fed}; in that definition, monomials in $\mathcal{W}_{\mathbb{C}^n}^+$ must have non-negative total degrees. Our extension will be important later for proving Theorem \ref{proposition: weak-irreducibility}. 
\end{rmk}

Note that $\mathcal{W}_{\mathbb{C}^n}^+$ is closed under the Wick product. One reason for considering this extension is to allow the following exponentials:
\begin{eg}
	Let $H\in(\mathcal{W}_{\mathbb{C}^n})_3$, i.e., every term in $H$ is of degree at least $3$, then the following {\em classical} and {\em quantum} exponentials both live in $\mathcal{W}_{\mathbb{C}^n}^+$:
	\begin{align*}
	\exp(H/\hbar) & = 1+\frac{H}{\hbar}+\frac{1}{2!}\frac{H\cdot H}{\hbar^2}+\cdots,\\
	\exp^{\star}(H/\hbar) & = 1+\frac{H}{\hbar}+\frac{1}{2!}\frac{H\star H}{\hbar^2}+\cdots
	\end{align*}
\end{eg}
\begin{notn}
	In this paper, we will use the notation $e^{H/\hbar}$ to denote the classical exponential of $H/\hbar$.  
\end{notn}
We can define an extension $\mathcal{F}_{\C^n}^+$ of the holomorphic Bargmann-Fock space $\mathcal{F}_{\C^n}$ in a similar way. It is clear that there is a natural extended Bargmann-Fock action of $\mathcal{W}_{\C^n}^+$ on $\mathcal{F}_{\C^n}^+$. The following lemma shows that the subspace $\mathcal{F}_{\C^n}\subset\mathcal{F}_{\C^n}^+$ is closed under the action of elements in $\mathcal{W}_{\C^n}^+$ of a special form. 
\begin{lem}\label{lemma: extension-Bargmann-Fock-W-plus}
	Suppose $H=\sum_{k,|I|\geq 0,|J|\geq 0}\hbar^{k} a_{k, I, J} \cdot y^I\bar{y}^J\in(\W_{\mathbb{C}^n})_3$ has no purely holomorphic terms, i.e., $a_{k,I,0}=0$. Then $\mathcal{F}_{\mathbb{C}^n}$ is closed under the action of $\exp(H/\hbar)$ and $\exp^\star(H/\hbar)$.
\end{lem}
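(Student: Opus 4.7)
The plan is to track an auxiliary ``anti-holomorphic weight'' on monomials through both products and the Bargmann-Fock action. Define
\[
\op{aho}(\hbar^k y^I \bar{y}^J) := k + |J|
\]
on monomials of $\W_{\C^n}^+$. The first step will be to observe, from the Wick product formula \eqref{equation: defn-Wick-product}, that $\op{aho}$ is additive under $\star$: for any two monomials $h_1, h_2$, every monomial appearing in $h_1 \star h_2$ satisfies $\op{aho} = \op{aho}(h_1) + \op{aho}(h_2)$. Indeed, the $\ell$-th order term of $\exp(-\hbar \sum_i \partial_{y_i} \partial_{\bar{y}'_i})$ contributes a factor $(-\hbar)^\ell/\ell!$ while deleting $\ell$ letters from the $\bar{y}$-block of the second factor, trading one unit of $|J|$ for one unit of $k$ and so leaving the weight invariant. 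The same additivity holds trivially for the ordinary commutative product.

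With this tool in hand the rest will run quickly. The hypothesis $a_{k, I, 0} = 0$ forces $|J| \geq 1$ (and recall $k \geq 0$) for every monomial of $H$, so $\op{aho} \geq 1$ there. Dividing by $\hbar$ drops the weight by one, and by the additivity above every monomial of $(H/\hbar)^m$ and $(H/\hbar)^{\star m}$ still has $\op{aho} \geq 0$. Applying such a monomial $\hbar^k y^I \bar{y}^J$ to $\hbar^l y^K \in \mathcal{F}_{\C^n}$ via \eqref{equation: representation-Wick-on-Fock} yields a scalar multiple of $\hbar^{k + |J| + l} y^I \partial^J y^K$, whose $\hbar$-power is $\op{aho}(\hbar^k y^I \bar{y}^J) + l \geq 0$. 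Hence each term of $\exp(H/\hbar) \circledast s$ and $\exp^\star(H/\hbar) \circledast s$ lies in $\mathcal{F}_{\C^n}$ rather than merely in $\mathcal{F}_{\C^n}^+$.

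Finally I would check that the assembled formal sum is a well-defined element of $\C[[y_1, \ldots, y_n]][[\hbar]]$, i.e., that only finitely many contributions reach each monomial $\hbar^a y^B$ in the output. The total grading $2k + |I| + |J|$ does the job: it is preserved by both products and by the Bargmann-Fock action, and since $H \in (\W_{\C^n})_3$ every monomial of $(H/\hbar)^m$ has total degree at least $3m - 2m = m$, so a target degree $d = 2a + |B|$ receives contributions only from $m \leq d$ and from finitely many degree-bounded monomials of $s$. The one place that will require care is the quantum case: iterated Wick products can manufacture purely holomorphic terms out of non-holomorphic factors, so the input hypothesis on $H$ is \emph{not} preserved under $\star^m$, and a naive tracking of ``non-holomorphic support'' would falter. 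The $\op{aho}$ bookkeeping sidesteps this pitfall precisely because the weight is strictly preserved, rather than merely bounded below, under each contraction; beyond this I do not anticipate a genuine obstacle.
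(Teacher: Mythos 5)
Your proof is correct and rests on the same mechanism as the paper's one-line argument: each $\bar y_j$ in a monomial of $H/\hbar$ acts as $\hbar\,\partial/\partial y_j$, so the $\hbar$'s it produces cancel the $1/\hbar$'s in the denominator. Your weight $k+|J|$ is a clean formalization of this count; in particular, its invariance under Wick contractions (which trade a $\bar y$ of the right factor for a power of $\hbar$) is precisely the detail needed to cover the $\exp^\star(H/\hbar)$ case, which the paper's terse proof leaves implicit.
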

\begin{proof}
	Each $\bar{y}^j$ in every monomial of $H/\hbar$ acts as $\hbar\frac{\partial}{\partial y^j}$, and the $\hbar$ in this differential operator will cancel with the $\hbar$ in the denominator. So the output can only have nonnegative powers of $\hbar$.  
\end{proof}

\begin{lem}\label{lem:classical-exp-equals-quantum-exp}
	The classical exponential $\exp(H/\hbar)$, where $H\in(\mathcal{W}_{\mathbb{C}^n})_3$, can also be written as a quantum exponential:
	$$
	\exp(H/\hbar)=\exp^\star(H'/\hbar),
	$$
	with $H'\in\mathcal{W}_{\mathbb{C}^n}^+$. In particular, $\exp(H/\hbar)$ is invertible in $\W_{\mathbb{C}^n}^+$ and
	$$
	(\exp(H/\hbar))^{-1}=\exp^\star(-H'/\hbar).
	$$
\end{lem}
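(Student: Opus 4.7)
The approach is to produce $H'$ as the (scaled) star-logarithm of $E := \exp(H/\hbar)$, exploiting the completeness of $\mathcal{W}_{\mathbb{C}^n}^+$ with respect to the degree filtration. First I would verify that $E \in \mathcal{W}_{\mathbb{C}^n}^+$: since every term of $H$ has degree $\geq 3$, each monomial in $(H/\hbar)^k$ has degree $\geq 3k - 2k = k$, so only finitely many $k$ contribute to any fixed total degree. Writing $E = 1 + X$ with $X = \sum_{k \geq 1}(H/\hbar)^k/k!$, every term of $X$ has degree $\geq 1$. I would then define
\[
H' := \hbar \cdot \log^\star(1+X) := \hbar \cdot \sum_{k \geq 1} \frac{(-1)^{k+1}}{k} \, \underbrace{X \star \cdots \star X}_{k \text{ times}}.
\]

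The main technical hurdle is verifying that this series actually defines an element of $\mathcal{W}_{\mathbb{C}^n}^+$. This rests on the observation that the Wick star product preserves the degree grading: in the bidifferential operator of \eqref{equation: defn-Wick-product}, each factor $-\hbar\,\partial_{y_i}\partial_{\bar{y}'_i}$ has total degree $2 - 1 - 1 = 0$, so $\deg(f \star g) = \deg f + \deg g$ for homogeneous $f, g$. Consequently $\deg(X^{\star k}) \geq k$, so only finitely many $k$ contribute to each fixed total degree of $\log^\star(1+X)$, the sum converges in $\mathcal{W}_{\mathbb{C}^n}^+$, and its result lies in degree $\geq 1$. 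Multiplication by $\hbar$ then gives $H' \in \mathcal{W}_{\mathbb{C}^n}^+$ with all terms of degree $\geq 3$; negative powers of $\hbar$ generally arise, which is exactly why the enlargement to $\mathcal{W}_{\mathbb{C}^n}^+$ is essential here.

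For the identity $\exp^\star(H'/\hbar) = E$, I would invoke the formal identity $\exp^\star(\log^\star(1+X)) = 1 + X$. Since $X$ star-commutes with itself, every star-power manipulation in the single variable $X$ proceeds exactly as in the commutative ring $\mathbb{Q}[[Y]]$, where $\exp(\log(1+Y)) = 1 + Y$ is a familiar universal identity; convergence in the noncommutative setting is ensured by $X$ lying in positive degree. The same self-commutation observation gives $\exp^\star(H'/\hbar) \star \exp^\star(-H'/\hbar) = \exp^\star(0) = 1$, which yields both the invertibility claim and the inverse formula $(\exp(H/\hbar))^{-1} = \exp^\star(-H'/\hbar)$.
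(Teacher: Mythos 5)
Your proof is correct and follows essentially the same route as the paper: both define $H'$ via the formal star-logarithm of $A=\exp(H/\hbar)-1$, using that $A$ lies in positive filtration degree (and that $\star$ preserves the grading) to ensure the series converges in $\mathcal{W}_{\mathbb{C}^n}^+$. You are in fact slightly more careful than the paper about the normalization — the paper's displayed $H'$ is really $H'/\hbar$ in the notation of the statement — and about justifying the $\exp$--$\log$ identity via commutativity of star-powers of $A$, but these are refinements of the same argument.
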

\begin{proof}
	Let $A=\exp(H/\hbar)-1\in(\W_{\mathbb{C}^n})_1$. Then $H'$ is defined via the following formal logarithm with respect to the quantum product $\star$:
	$$
	H'=\sum_{k=1}^\infty\frac{(-1)^{k+1}}{k}A^k,
	$$
	where $A^k=A\star A\star \cdots \star A$ denotes the $k$-th power with respect to the quantum product. The fact that $A\in(\W_{\mathbb{C}^n})_1$ implies that each term of $H'$ is of positive degree, and in each degree there are only finitely many terms in $H'$, i.e., $H'\in\mathcal{W}_{\mathbb{C}^n}^+$.
\end{proof}

\subsection{Formal Hilbert spaces}\label{section:formal_Hilbert_spaces}
\


In the K\"ahler geometry setting, we cannot reduce to the local model of the Bargmann-Fock representation and will need to consider the effect of a non-flat metric. For this purpose, we need the following theorem:
\begin{thm}[Feynman-Laplace]\label{theorem: Feynman-Laplace-Gaussian-integral}
	Let $X$ be a compact $n$-dimensional manifold (possibly with boundary), and let $f$ be a smooth function attaining a unique minimum on $X$ at an interior point $x_0 \in X$, and assume that the Hessian of $f$ is non-degenerate at $x_0$; also, let $\mu=\alpha(x)\cdot e^{g(x)}d^nx$ be a top-degree form. Then the integral 
	$$
	I(\hbar):=\int_X \mu e^{-\frac{1}{\hbar}f(x)}=\int_X \alpha(x)\cdot e^{\frac{-f(x)+\hbar g(x)}{\hbar}}dx_1\cdots dx_n,
	$$
	has the following asymptotic expansion as $\hbar\rightarrow 0^{+}$:
	$$
	I(\hbar)\sim\sum_{k\geq 0}a_k\cdot\hbar^k,
	$$
	where each coefficient $a_k$ is a sum of Feynman weights which depends only on the infinite jets of the functions $f,g$ at the point $x_0$. 
\end{thm}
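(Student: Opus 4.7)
The plan is to follow the classical Laplace (real stationary-phase) method. Without loss of generality I assume $f(x_0)=0$, absorbing the constant $e^{-f(x_0)/\hbar}$ into the prefactor if needed. The first step is a standard localization: choose a coordinate chart $U$ centered at $x_0$ and a cutoff $\chi\in C_c^\infty(U)$ with $\chi\equiv 1$ on a smaller neighborhood, and split $I(\hbar)=I_1(\hbar)+I_2(\hbar)$ accordingly, where $I_2(\hbar)=\int_X(1-\chi)\mu e^{-f/\hbar}$. Since $x_0$ is the unique minimum of $f$ and $X$ is compact, there exists $c>0$ with $f\geq c$ on the support of $1-\chi$, giving $|I_2(\hbar)|=O(e^{-c/\hbar})$; this is smaller than every power of $\hbar$ and so contributes nothing to the asymptotic expansion.

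In the chosen coordinates I would write $f(x)=\tfrac{1}{2}H(x,x)+F(x)$, where $H$ is the (positive-definite) Hessian at $x_0$ and $F(x)=O(|x|^3)$, and similarly expand $\alpha(x)$ and $g(x)$ as Taylor series at $x_0$. Now perform the Gaussian rescaling $x=\sqrt{\hbar}\,u$, which introduces a Jacobian $\hbar^{n/2}$ and turns the quadratic part into an $\hbar$-independent Gaussian weight $e^{-H(u,u)/2}$. Because $F$ begins at cubic order, $F(\sqrt{\hbar}u)/\hbar=\sum_{k\geq 1}\hbar^{k/2}p_k(u)$ is a formal power series in $\sqrt{\hbar}$ with polynomial coefficients; likewise $g(\sqrt{\hbar}u)$ and $\alpha(\sqrt{\hbar}u)$ expand as formal power series in $\sqrt{\hbar}$. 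Expanding $e^{-F(\sqrt{\hbar}u)/\hbar}\cdot e^{g(\sqrt{\hbar}u)}\cdot\alpha(\sqrt{\hbar}u)$ in powers of $\sqrt{\hbar}$ reduces the problem, order by order, to Gaussian polynomial integrals
$$\int_{\mathbb{R}^n}P(u)\,e^{-\tfrac{1}{2}H(u,u)}\,d^n u.$$
Wick's theorem evaluates each such integral as a sum over pairings weighted by entries of $H^{-1}$; organizing the pairings against the vertices supplied by the Taylor coefficients of $F$, $g$, and $\alpha$ gives exactly the sum-over-Feynman-diagrams description, with propagator $H^{-1}$. Parity under $u\mapsto -u$ kills all half-integer powers of $\hbar$, leaving the expansion $I(\hbar)\sim\sum_{k\geq 0}a_k\hbar^k$ with each $a_k$ a finite sum of Feynman weights depending only on the infinite jets of $f$ and $g$ at $x_0$.

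The main technical obstacle is upgrading this formal manipulation into a genuine asymptotic statement. For this I would truncate the Taylor expansions of $F$, $\alpha$, and $g$ at an order $N_0(N)$ chosen large compared with $N$, apply the finite Taylor formula with integral remainder to each, and collect the errors into a remainder $R_N(u,\hbar)$ obeying a pointwise estimate of the form $|R_N(u,\hbar)|\leq C_N\hbar^{(N+1)/2}(1+|u|)^{M_N}$ on the support of $\chi(\sqrt{\hbar}u)$. Integration against the Gaussian weight, combined with the $\hbar^{n/2}$ Jacobian, then gives an error of size $O(\hbar^{N+1})$. A minor subtlety is that after rescaling, the support of $\chi(\sqrt{\hbar}u)$ grows like $\hbar^{-1/2}$, so one must check that polynomial growth factors remain controlled by the Gaussian on this expanding region; this is ensured by shrinking the original cutoff once so that the cubic remainder is small enough for positive-definiteness of $H$ to strictly dominate. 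Together with the $O(e^{-c/\hbar})$ bound on $I_2$, this yields the claimed asymptotic expansion.
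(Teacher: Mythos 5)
Your proposal is correct: it is the standard proof of the Laplace asymptotic expansion (localization by a cutoff with an $O(e^{-c/\hbar})$ tail, the rescaling $x=\sqrt{\hbar}\,u$, Taylor expansion and Wick's theorem with propagator $H^{-1}$, parity to eliminate half-integer powers, and Taylor-remainder estimates for rigor), which is precisely the route taken in the reference the paper cites for this result. The paper itself offers no proof of this theorem --- it only describes the graph-theoretic organization of the coefficients $a_k$ and refers to Mnev's exposition --- so there is nothing to contrast; the one point worth tidying in a final write-up is the overall prefactor $\hbar^{n/2}$ coming from the Jacobian (harmless here, since in all of the paper's applications the real dimension is even and the integral is normalized by $\hbar^{-n}$).
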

More explicitly, each $a_k$ is a sum over connected graphs of genus $k$. Recall that the genus of a graph $\gamma$ is the sum of the genera of the vertices in $\gamma$ (in our situation, each vertex has genus either $0$ or $1$, labeled by $f(x)$ and $g(x)$ respectively, since the integrand is $e^{\frac{f(x)+\hbar g(x)}{\hbar}}$), and $k = 1-\chi(\gamma)$ where $\chi(\gamma)$ denotes the Euler characteristic of $\gamma$. The propagator in the Feynman weights is given by the inverse of the Hessian of $f$ at $x_0$. The following picture shows a Feynman graph:
	$$
	\figbox{0.28}{graph}
	$$
	Here every vertex labeled by $f$ must be at least trivalent, and every vertex labeled by $\hbar\cdot g$ must be univalent. 
	For more details on the Feynman-Laplace Theorem, we refer the readers to Pavel Mnev's excellent exposition in \cite{Mnev}, and for a detailed exposition of Feynman graph computations, we refer the readers to \cite{Kevin-book}.

We will mainly apply the Feynman-Laplace Theorem to a function $f(z,\bar{z})$ on a closed disk $\mathbb{D}^{2n} \subset \mathbb{C}^n$ such that the origin $0 \in \mathbb{D}^{2n}$ is the unique minimum of $f$ and $f(0) = 0$.
So, by taking an appropriate complex coordinate system $z=(z_1,\cdots, z_n)$ centered at $0$, the Taylor expansion of $f$ at the origin is given by 
$$
f(z,\bar{z})=|z|^2+O(|z|^3).
$$
Theorem \ref{theorem: Feynman-Laplace-Gaussian-integral} gives an asymptotic expansion of the following integral:
$$
\frac{(\sqrt{-1})^n}{\hbar^n}\int_{\mathbb{D}^{2n}}h(z,\bar{z})e^{\frac{-f(z,\bar{z})+\hbar\cdot g(z,\bar{z})}{\hbar}}dz_1 d\bar{z}_1\cdots dz_n d\bar{z}_n.
$$
\begin{rmk}
	The above integral clearly depends on the radius of $\mathbb{D}^{2n}$, but its asymptotic expansion is actually independent of the radius. 
\end{rmk}

Theorem \ref{theorem: Feynman-Laplace-Gaussian-integral} implies that the asymptotic expansion of the above integral depends only on the Taylor expansions of the functions $f, g$ and $h$ at the origin. We can thus replace these functions by formal power series in $\mathcal{W}_{\mathbb{C}^n}$, and define a formal integral:
\begin{defn}\label{definition: formal_integral}
	For $\phi(y,\bar{y})\in(\W_{\mathbb{C}^n})_3$ and $h(y,\bar{y})\in\W_{\mathbb{C}^n}$, we define the following {\em formal integral}:
	$$
	\frac{1}{\hbar^n}\int h(y,\bar{y})\cdot e^{\frac{-|y|^2+\phi(y,\bar{y})}{\hbar}}\in\mathbb{C}[[\hbar]]
	$$
	via the Feynman rule in Theorem \ref{theorem: Feynman-Laplace-Gaussian-integral}. 
\end{defn}
\begin{rmk}
 	We omit the standard differential form $(\sqrt{-1})^ndz_1 d\bar{z}_1\cdots dz_nd\bar{z}_n$ in the notation of formal integral. 
\end{rmk}

\begin{lem}\label{lemma: formal-integral-preserves-filtration}
 	The formal integral preserves the decreasing filtration on $\W_{\mathbb{C}^n}$ and $\mathbb{C}[[\hbar]]$; more precisely, if $h(y,\bar{y})\in(\W_{\mathbb{C}^n})_k$, then the formal integral lies in $(\mathbb{C}[[\hbar]])_k$.
\end{lem}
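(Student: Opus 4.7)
The plan is to unpack the formal integral into its Feynman expansion and track the $\hbar$-degree diagram-by-diagram. By multilinearity, I can assume $h = \hbar^{a_h} y^{I_h}\bar y^{J_h}$ is a single monomial of weight $w_h = 2a_h+|I_h|+|J_h| \geq k$. Expanding $e^{\phi/\hbar}$ gives $\sum_{m\geq 0}\frac{1}{m!}(\phi/\hbar)^m$; after multilinearly expanding each $\phi$ as a sum of monomials, the formal integral becomes a sum over tuples $(m, \phi\text{-monomials of weights } w_1,\dots,w_m\geq 3)$ of Gaussian integrals of the form
\[
\frac{1}{m!\,\hbar^n}\int \hbar^{A}\, y^{I}\bar y^{J}\, e^{-|y|^2/\hbar}\,\cdot\hbar^{-m},
\]
with $A = a_h+\sum a_i$, $I = I_h+\sum I_i$, $J = J_h+\sum J_i$.

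Next I would apply Wick's theorem to evaluate $\frac{1}{\hbar^n}\int y^I\bar y^J e^{-|y|^2/\hbar}$. Because the Gaussian measure is diagonal, the integral vanishes unless $I=J$ component-wise, in which case a change of variables $y = \sqrt{\hbar}\,u$ gives precisely $\mathrm{const}\cdot\hbar^{|I|}$ (each $y_j$--$\bar y_j$ contraction yields one factor of $\hbar$). Combining the explicit $\hbar^{-m}$ from $(\phi/\hbar)^m$, the monomial factor $\hbar^A$, and the Wick contribution $\hbar^{|I|}$, the net exponent of $\hbar$ in this diagram is
\[
A + |I| - m.
\]
Now I would invoke the weight identity: since $I=J$ forces $|I|+|J| = 2|I|$, and by definition $w_h+\sum_{i=1}^m w_i = 2A + (|I|+|J|)$, we obtain $A+|I| = \tfrac{1}{2}(w_h+\sum_i w_i)$. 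Hence the net $\hbar$-exponent equals
\[
\frac{w_h + \sum_{i=1}^{m} w_i}{2} - m.
\]

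Finally I would use the trivalence hypothesis $\phi\in(\W_{\C^n})_3$, which forces $w_i\geq 3$ for each $i$, giving
\[
\frac{w_h + \sum_i w_i}{2} - m \;\geq\; \frac{w_h + 3m}{2} - m \;=\; \frac{w_h+m}{2}\;\geq\;\frac{w_h}{2}\;\geq\;\frac{k}{2}.
\]
This exponent is manifestly an integer (it equals $A+|I|-m$ with $A,|I|,m\in\Z_{\geq 0}$), so it is at least $\lceil k/2\rceil$. Since a power $\hbar^\ell$ has degree $2\ell$ in the grading on $\C[[\hbar]]$, a contribution $\hbar^{\lceil k/2\rceil}$ has degree at least $k$, which places the entire formal integral in $(\C[[\hbar]])_k$.

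No genuine obstacle arises here; the content of the lemma is exactly that the $\hbar^{-m}$ from $m$ insertions of $\phi/\hbar$ is absorbed by the trivalence bound $w_i\geq 3$, yielding a net non-negative contribution of $m/2$ to the degree for every additional $\phi$-vertex. The only bookkeeping subtlety is the parity check that the half-integer appearance in $(w_h+\sum w_i)/2$ is automatically forced to be an integer by the pairing constraint $I=J$ imposed by Wick's theorem.
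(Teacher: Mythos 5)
Your proof is correct and rests on the same idea as the paper's (much terser) argument: since every vertex of $\phi$ has degree at least $3$, each insertion of $\phi/\hbar$ contributes a non-negative amount to the net $\hbar$-degree, so the filtration degree of the integral is bounded below by that of $h$. Your Wick-theorem bookkeeping, including the parity check that $I=J$ forces the exponent $\tfrac{1}{2}(w_h+\sum_i w_i)-m$ to be an integer, is a fully expanded and slightly sharper version of the paper's one-line observation that the leading term agrees with the free ($\phi=0$) case.
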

\begin{proof}
	The leading term of the formal integral 
	$$
	\frac{1}{\hbar^n}\int h(y,\bar{y})\cdot e^{\frac{-|y|^2+\phi(y,\bar{y})}{\hbar}}\in\mathbb{C}[[\hbar]]
	$$
	is the same as that in the free case, i.e., when $\phi=0$. Thus the leading term of the integral have the same degree as the leading degree of $h(y,\bar{y})$.
\end{proof}

Using this formal integral, we can define a {\em Hilbert space in the formal sense}, namely, its inner product takes values in the formal Laurent series $\mathbb{C}((\sqrt{\hbar}))$:
\begin{defn}
	On the $\mathbb{C}((\sqrt{\hbar}))$-vector space $\mathcal{W}_{\mathbb{C}^n}\otimes_{\mathbb{C}[[\hbar]]}\mathbb{C}((\sqrt{\hbar}))$, we define a {\em complex conjugation} by extending the complex conjugation on polynomials in $\mathbb{C}^n$: 
	$$
	(\sqrt{\hbar})^ka_{I,J}y^I\bar{y}^J\mapsto(\sqrt{\hbar})^k\bar{a}_{I,J}\bar{y}^I y^J.
	$$
	Fix $\phi(y,\bar{y})\in(\W_{\mathbb{C}^n})_3$. Then for $f, g\in\mathcal{W}_{\mathbb{C}^n}((\sqrt{\hbar}))$, we define their {\em formal inner product} as the following formal integral:
	\begin{equation}\label{defn:formal-inner-product}
		\langle f, g\rangle:=\frac{1}{\hbar^n}\cdot\int f\bar{g}\cdot e^{\frac{-|y|^2+\phi(y,\bar{y})}{\hbar}},
	\end{equation}
	which is in turn defined using Feynman graph expansions as in Definition \ref{definition: formal_integral} and takes value in $\mathbb{C}((\sqrt{\hbar}))$. 
\end{defn}

The following are some simple properties of this formal inner product.
\begin{lem}\label{lemma: formal-inner-product-is-hermitian}
	Suppose that $\phi$ is {\em real}, i.e., $\phi=\bar{\phi}$. Then the formal inner product \eqref{defn:formal-inner-product} is Hermitian, namely, $\langle f,g\rangle=\overline{\langle g,f\rangle}$.
\end{lem}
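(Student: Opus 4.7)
I will reduce the Hermiticity statement to a single conjugation rule for the formal integral itself, namely
\begin{equation*}
\overline{\frac{1}{\hbar^n}\int h\cdot e^{(-|y|^2+\phi)/\hbar}} \;=\; \frac{1}{\hbar^n}\int \bar h\cdot e^{(-|y|^2+\bar\phi)/\hbar}
\end{equation*}
for any $h\in\mathcal{W}_{\mathbb{C}^n}((\sqrt\hbar))$ and $\phi\in(\mathcal{W}_{\mathbb{C}^n})_3$. Granting this, Hermiticity is immediate: since $|y|^2=\sum_i y_i\bar y_i$ is self-conjugate under the conjugation of Definition \ref{definition: formal_integral} and $\bar\phi=\phi$ by hypothesis, applying the rule to $h=g\bar f$ yields
\begin{equation*}
\overline{\langle g,f\rangle} \;=\; \frac{1}{\hbar^n}\int\overline{g\bar f}\cdot e^{(-|y|^2+\phi)/\hbar} \;=\; \frac{1}{\hbar^n}\int f\bar g\cdot e^{(-|y|^2+\phi)/\hbar} \;=\; \langle f,g\rangle,
\end{equation*}
where the middle equality uses $\overline{g\bar f}=\bar g\cdot f=f\bar g$, which is a direct consequence of the defined conjugation on $\mathcal{W}_{\mathbb{C}^n}\otimes_{\mathbb{C}[[\hbar]]}\mathbb{C}((\sqrt\hbar))$.

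\textbf{Proof of the conjugation rule.} By Definition \ref{definition: formal_integral} together with Theorem \ref{theorem: Feynman-Laplace-Gaussian-integral}, the formal integral is a sum of Feynman weights indexed by graphs $\gamma$; each weight is the contraction of vertex labels (the Taylor coefficients of $\phi$ at interaction vertices and of $h$ at the external vertex) with copies of the propagator. The propagator is the inverse of the Hessian of $|y|^2$ at the origin, which in the $(y,\bar y)$ basis is simply $\delta_{ij}$ pairing a $y_i$-leg with a $\bar y_j$-leg, and is therefore manifestly invariant under the swap $y\leftrightarrow \bar y$. Under the defined conjugation on $\mathcal{W}_{\mathbb{C}^n}$, a vertex label $a_{k,I,J}\,\hbar^k y^I\bar y^J$ is sent to $\bar a_{k,I,J}\,\hbar^k y^J\bar y^I$: the coefficient is conjugated and the roles of the $y$-legs and $\bar y$-legs are interchanged. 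Because the propagator is $y\leftrightarrow\bar y$ symmetric, the contraction pattern is preserved under this swap, and one concludes the graph-by-graph identity $\overline{W_\gamma(h,\phi)}=W_\gamma(\bar h,\bar\phi)$. Summing over $\gamma$ gives the claim.

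\textbf{Anticipated obstacle.} There is no deep difficulty; the only care needed is the bookkeeping for the $\mathbb{C}((\sqrt\hbar))$-valued coefficients. Since $|y|^2$ is purely real and the integrand is a formal polynomial in $y,\bar y$, the parity count in each Feynman weight forces only integer powers of $\hbar$ from the propagator contractions, while $\sqrt\hbar$ itself is real and thus commutes with conjugation. Compatibility with the filtration on both sides is ensured by Lemma \ref{lemma: formal-integral-preserves-filtration}, so the formal sums involved are well-defined elements of $\mathbb{C}((\sqrt\hbar))$ and the termwise conjugation used above makes sense.
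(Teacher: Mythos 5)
Your proof is correct. The paper states this lemma without proof, listing it among the ``simple properties'' of the formal inner product, so there is no argument of the authors' to compare against; your reduction to the graph-by-graph identity $\overline{W_\gamma(h,\phi)}=W_\gamma(\bar h,\bar\phi)$ is a sound justification, using only the $y\leftrightarrow\bar y$ symmetry of the propagator $\delta_{ij}$, the fact that the defined conjugation is an antilinear ring involution sending $y^I\bar y^J$ to $\bar y^I y^J$ (so $\overline{g\bar f}=f\bar g$), and the hypothesis $\bar\phi=\phi$.
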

Lemma \ref{lemma: formal-integral-preserves-filtration} implies the following
\begin{cor}
  	The formal inner product of $f,g\in\mathcal{W}_{\mathbb{C}^n}^+$ is a formal power series in $\hbar$, i.e., $\langle f,g\rangle\in\mathbb{C}[[\hbar]]$.
\end{cor}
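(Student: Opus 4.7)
The plan is to reduce directly to Lemma \ref{lemma: formal-integral-preserves-filtration} via a termwise expansion of $f\bar g$. First I would observe that the conjugation $y_j \leftrightarrow \bar y_j$ preserves the $\Z$-grading $\deg(\hbar^k y^I \bar y^J) = 2k + |I| + |J|$, as well as both closure properties in Definition \ref{defn:extension-Wick-algebra} (the uniform lower bound on degrees and the finite number of monomials per degree). Hence $\bar g \in \W_{\C^n}^+$, and since $\W_{\C^n}^+$ is closed under the commutative multiplication used in the integrand, $f\bar g$ again lies in $\W_{\C^n}^+$.

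Next I would decompose $f\bar g = \sum_{d \ge d_0} h_d$ into its homogeneous pieces of total degree $d$, where $d_0$ is the uniform lower bound supplied by Definition \ref{defn:extension-Wick-algebra}. The per-degree finiteness condition guarantees that each $h_d$ is a finite $\C$-linear combination of monomials $\hbar^k y^I \bar y^J$ with $2k+|I|+|J|=d$, so Lemma \ref{lemma: formal-integral-preserves-filtration} applies termwise and places the formal integral of $h_d$ inside $(\C[[\hbar]])_d$.

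Summing over $d$, each fixed power $\hbar^j$ in the total integral receives contributions only from the finitely many $h_d$ with $d_0 \le d \le 2j$, so the partial sums stabilize at every finite $\hbar$-order and assemble into a well-defined element of $\C[[\hbar]]$. The main (and essentially only) point needing care is this $\hbar$-adic convergence, which is handled entirely by the filtration-preservation statement of Lemma \ref{lemma: formal-integral-preserves-filtration}: higher-degree components of $f\bar g$ are forced into strictly higher $\hbar$-orders after integration, so no coefficient receives infinitely many contributions. The only bookkeeping subtlety is matching the grading on $\W_{\C^n}^+$ with the induced grading on $\C[[\hbar]]$ (where $\hbar$ has degree $2$), which is already built into the statement of the previous lemma.
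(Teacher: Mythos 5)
Your route is exactly the paper's: the corollary is stated there with no proof beyond the phrase ``Lemma \ref{lemma: formal-integral-preserves-filtration} implies the following'', and your termwise decomposition of $f\bar g$ into homogeneous pieces, together with the observation that only finitely many pieces can contribute to any fixed power of $\hbar$, is the natural elaboration of that one-liner. The $\hbar$-adic convergence argument and the reduction to the filtration lemma are both fine.

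There is, however, a genuine gap at the very last step, where you conclude membership in $\C[[\hbar]]$ rather than merely in $\C((\hbar))$. Lemma \ref{lemma: formal-integral-preserves-filtration} places the integral of the degree-$d$ component $h_d$ into $(\C[[\hbar]])_d$, i.e.\ makes it $O(\hbar^{\lceil d/2\rceil})$; this excludes negative powers of $\hbar$ only when $d\geq 0$. But Definition \ref{defn:extension-Wick-algebra} explicitly allows elements of $\W_{\C^n}^+$ whose terms have negative total degree (the uniform lower bound $d_0$ ``could be negative''), and for such elements the conclusion fails: take $f=g=\hbar^{-1}y_1$, which satisfies all three conditions of Definition \ref{defn:extension-Wick-algebra}. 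Then $f\bar g=\hbar^{-2}y_1\bar y_1$ and, by Lemma \ref{lemma: holomorphic-polynomials-orthonormla-basis}, $\langle f,g\rangle=\hbar^{-2}\left(\hbar+O(\hbar^2)\right)=\hbar^{-1}+O(1)\notin\C[[\hbar]]$. What your argument actually proves is $\langle f,g\rangle\in\hbar^{\lceil d_0/2\rceil}\C[[\hbar]]\subset\C((\hbar))$ --- in particular no fractional powers of $\hbar$ and only finitely many negative ones --- while the stronger claim $\langle f,g\rangle\in\C[[\hbar]]$ needs the extra hypothesis $d_0\geq 0$ (which holds under Fedosov's more restrictive definition of $\W^+$, and for all elements the paper later feeds into this inner product, but not for general elements of the $\W_{\C^n}^+$ defined here). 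You should either impose that hypothesis or weaken the conclusion to $\C((\hbar))$; as written, the final sentence of your proof asserts more than the preceding steps deliver.
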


\begin{rmk}
	We allow $\phi$ to have $\hbar$-dependence. In particular, the fact that $\phi$ is of at least degree $3$ guarantees that the graph expansion of \eqref{defn:formal-inner-product} is valid.
	In the K\"ahler geometry setting, $f$ will be given by the logarithm of the norm of a local holomorphic frame of the prequantum line bundle, and $g$ will be the logarithm of the volume form.
\end{rmk}


The following lemma explains the reason for considering an extension of $\mathcal{W}_{\C^n}$ by $\C((\hbar))$:
\begin{lem}\label{lemma: holomorphic-polynomials-orthonormla-basis}
	The holomorphic polynomials
	$$
	\frac{y^I}{\sqrt{I!\hbar^{|I|}}}
	$$
	form a basis of the formal Hilbert space, which is orthonormal modulo $\hbar$, i.e.,
	$$
	\langle \frac{y^I}{\sqrt{I!\hbar^{|I|}}}, \frac{y^J}{\sqrt{J!\hbar^{|J|}}}\rangle=\delta_{I,J}+O(\hbar).
	$$
\end{lem}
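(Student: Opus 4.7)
The plan is to reduce the evaluation to the free Gaussian integral via Wick's theorem, and to show that every perturbative correction coming from $\phi$ is subleading in $\hbar$.

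First I would expand
\begin{equation*}
e^{\phi/\hbar}=\sum_{k\geq 0}\frac{\phi^{k}}{k!\,\hbar^{k}}
\end{equation*}
inside the formal integral and rewrite the inner product as
\begin{equation*}
\left\langle \frac{y^{I}}{\sqrt{I!\hbar^{|I|}}},\frac{y^{J}}{\sqrt{J!\hbar^{|J|}}}\right\rangle =\frac{1}{\sqrt{I!\,J!}\,\hbar^{(|I|+|J|)/2}}\sum_{k\geq 0}\frac{1}{k!\,\hbar^{k}}\cdot\frac{1}{\hbar^{n}}\int y^{I}\bar{y}^{J}\phi^{k}\,e^{-|y|^{2}/\hbar}.
\end{equation*}
Because $\phi\in(\mathcal{W}_{\mathbb{C}^{n}})_{3}$, the integrand $y^{I}\bar{y}^{J}\phi^{k}$ lies in filtration degree $\geq |I|+|J|+3k$. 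Lemma \ref{lemma: formal-integral-preserves-filtration} then forces the corresponding Gaussian integral into $(\mathbb{C}[[\hbar]])_{|I|+|J|+3k}$, i.e.\ into $\hbar$-order at least $(|I|+|J|+3k)/2$. Together with the prefactors $\hbar^{-k}$ and $\hbar^{-(|I|+|J|)/2}$, the $k$-th summand contributes to the inner product at $\hbar$-order $\geq k/2$, so only $k=0$ survives at order $\hbar^{0}$.

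For the surviving $k=0$ term I would invoke Wick's theorem. The quadratic form $\sum_{i}y_{i}\bar{y}_{i}/\hbar$ has inverse propagator $\langle y_{i}\bar{y}_{j}\rangle=\hbar\,\delta_{ij}$, and the normalization in Definition \ref{definition: formal_integral} is fixed so that the empty Feynman diagram gives weight $1$. The $(y,\bar{y})$-pairings only exist when $I=J$, in which case there are $I!$ of them, each carrying a factor $\hbar^{|I|}$. Consequently $\frac{1}{\hbar^{n}}\int y^{I}\bar{y}^{J}e^{-|y|^{2}/\hbar}=I!\,\hbar^{|I|}\delta_{I,J}$, and dividing by $\sqrt{I!\,J!\,\hbar^{|I|+|J|}}$ yields exactly $\delta_{I,J}$ at leading order. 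The basis property is automatic, since $\{y^{I}\}$ is already a $\mathbb{C}[[\hbar]]$-basis of $\mathcal{F}_{\mathbb{C}^{n}}$.

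The main technical subtlety is the careful $\hbar$-bookkeeping when $I\neq J$: the filtration argument alone only controls the remainder at half-integer orders, but Wick's requirement that the total $y$- and $\bar{y}$-counts match in every surviving diagram forces integer $\hbar$-powers in each such correction, which is what promotes the remainder from an a priori $O(\sqrt{\hbar})$ to the claimed $O(\hbar)$.
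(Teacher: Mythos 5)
Your core argument --- expanding $e^{\phi/\hbar}$, using the filtration count via Lemma \ref{lemma: formal-integral-preserves-filtration} to push every interaction correction into strictly positive degree, and evaluating the $k=0$ term by the free Wick contraction to get $\delta_{I,J}$ --- is correct and is essentially the paper's own proof.

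The final paragraph, however, does not hold up. Matching the $y$- and $\bar y$-counts in a surviving diagram makes the power of $\hbar$ produced by the contractions an integer, but the overall normalization $\hbar^{-(|I|+|J|)/2}$ is a half-integer power whenever $|I|+|J|$ is odd, so the total power of a correction need not be an integer. Quantitatively: if a correction uses the monomial $\hbar^{-l}y^P\bar y^Q$ from $e^{\phi/\hbar}$, full contraction forces $|I|+|P|=|J|+|Q|$, and the correction enters the inner product at order $\hbar^{d/2}$ where $d=|P|+|Q|-2l$ is the degree of that monomial; this is a half-integer exactly when $d$ is odd. A general $\phi\in(\W_{\mathbb{C}^n})_3$ may contain degree-$3$ terms, so the bound your argument actually delivers is $O(\sqrt{\hbar})$, and this is sharp: for $n=1$ and $\phi=c\,(y^2\bar y+y\bar y^2)$ one computes $\langle 1,\,y/\sqrt{\hbar}\rangle=2c\,\hbar^{1/2}+O(\hbar)$. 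The stated $O(\hbar)$ requires every non-constant term of $e^{\phi/\hbar}$ to have degree at least $2$, which is exactly what the geometric normalizations \eqref{equation: Taylor-expansion-hermitian-metric} and Lemma \ref{lemma: local-volume-form-purely-holomorphic-derivatives-vanish} guarantee for the $\phi$'s arising in the paper (compare the extra hypothesis in the refinement of Corollary \ref{corollary: leading-term-inner-product-polynomial}); so either impose that condition on $\phi$ or weaken the conclusion to $\delta_{I,J}+O(\sqrt{\hbar})$. The ``integer powers are forced'' step cannot be salvaged as stated.
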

\begin{proof}
	The proof for the cases where $I=J$ is obvious since the computation of the leading term is the same as that in the Bargmann-Fock space. For the cases where $I\not=J$, the terms $y^I\bar{y}^J$ cannot be fully contracted using the quadratic part $-|y|^2/\hbar$. The ``interaction'' part $e^{\phi/\hbar}$ needs to come in so that we can get a full contraction which takes value in $\mathbb{C}[[\hbar]]$. Notice that this contraction preserves the filtration induced by the grading on $\W_{\mathbb{C}^n}$ and $\mathbb{C}[[\hbar]]$. Now  $\frac{y^I}{\sqrt{I!\hbar^{|I|}}}, \frac{\bar{y}^J}{\sqrt{J!\hbar^{|J|}}}$ have degree $0$, and all terms in $e^{\phi/\hbar}$ have degrees strictly greater than $0$. The result follows.  
\end{proof}
\begin{cor}\label{corollary: leading-term-inner-product-polynomial}
	Given two different multi-indices $I\not=J$, we have the following asymptotics:
	$$
	\hbar^{-n}\int y^I\bar{y}^Je^{\frac{-|y|^2+\phi(y,\bar{y})}{\hbar}}=O(\hbar^{\max\{|I|,|J|\}}). 
	$$
	Let $\phi=\sum_{k,I,J}\hbar^k\phi_{k,I,J}y^I\bar{y}^J$, and suppose $\phi$ satisfies the property that $\phi_{0,I,J}=0$
	if either $|I|=1$ or $|J|=1$. Then we further have the refinement:
	$$
	\hbar^{-n}\int y^I\bar{y}^Je^{\frac{-|y|^2+\phi(y,\bar{y})}{\hbar}}=o(\hbar^{\max\{|I|,|J|\}}).
	$$
\end{cor}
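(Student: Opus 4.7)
The plan is to read off both estimates from a Feynman-graph expansion of the formal integral, followed by careful $\hbar$-power counting on each contributing diagram.

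First I would expand $e^{\phi/\hbar}$ as a formal power series and apply the Feynman rules associated to Theorem~\ref{theorem: Feynman-Laplace-Gaussian-integral}. The integral then becomes a formal sum $\sum_\gamma W_\gamma$ over Feynman graphs $\gamma$, each with one distinguished external vertex carrying $y^I\bar y^J$ (contributing $|I|$ outgoing $y$-half-edges and $|J|$ outgoing $\bar y$-half-edges) together with $V=V(\gamma)$ internal vertices arising from monomials $\hbar^{k_v}\phi_{k_v,A_v,B_v}y^{A_v}\bar y^{B_v}$ of $\phi$. Each propagator pairs a $y$-leg with a $\bar y$-leg and contributes a factor of $\hbar$ (the inverse of the Hessian of $|y|^2$), while each internal vertex contributes $\hbar^{k_v-1}$; the prefactor $\hbar^{-n}$ cancels the Gaussian volume $\hbar^n$. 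Writing $E$ for the number of propagators, the matching condition reads $E=|I|+\sum_v |A_v|=|J|+\sum_v |B_v|$, and the $\hbar$-exponent of $W_\gamma$ equals $E-V+\sum_v k_v$.

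For the first estimate, I use that $I\neq J$ forces the empty-$\phi$ Gaussian contraction to vanish, hence $V(\gamma)\geq 1$ for every surviving graph. Assuming WLOG $|I|\geq|J|$, substituting $E=|I|+\sum|A_v|$ rewrites the $\hbar$-exponent as $|I|+\sum_v(|A_v|+k_v)-V$, so the bound reduces to the per-vertex inequality $|A_v|+k_v\geq 1$. The defining condition $2k_v+|A_v|+|B_v|\geq 3$ for $\phi\in(\W_{\C^n})_3$ permits only one escape from this, namely a pure antiholomorphic $\hbar^0$-vertex $\bar y^{B_v}$ with $|B_v|\geq 3$; such vertices are excluded under the Kähler-type conventions of Section~\ref{section:formal_Hilbert_spaces} (where $\phi$ is the deviation of a Kähler-potential-type function from $|y|^2$, with pure (anti)holomorphic $\hbar^0$-terms absorbed into the local holomorphic frame). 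Summing the per-vertex inequality then yields $E-V+\sum_v k_v\geq |I|=\max\{|I|,|J|\}$.

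For the refinement, the added hypothesis $\phi_{0,I,J}=0$ for $|I|=1$ or $|J|=1$ kills every $\hbar^0$-vertex with exactly one $y$- or $\bar y$-leg; combined with the step above, every $\hbar^0$-vertex then has $|A_v|,|B_v|\geq 2$, and every $\hbar^{\geq 1}$-vertex automatically has $k_v\geq 1$. Consequently at least one vertex improves the per-vertex bound to $|A_v|+k_v\geq 2$, generating a strict surplus of at least one unit over the first estimate and yielding $\hbar$-exponent $>\max\{|I|,|J|\}$, i.e.\ $o(\hbar^{\max\{|I|,|J|\}})$. The main obstacle is precisely this per-vertex accounting: a single vertex could in principle absorb the whole multi-index mismatch $I-J$ in its antiholomorphic legs, and it is exactly this configuration that must be ruled out via the structural conventions on $\phi$ inherited from the Kähler-geometry setup of Section~\ref{section:formal_Hilbert_spaces}.
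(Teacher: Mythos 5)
Your approach is the same as the paper's: both proofs are an $\hbar$-power count on the Wick/Feynman expansion of the integral, yours bookkept per internal vertex and the paper's in aggregate (the paper's $l$ is your $V-\sum_v k_v$, and its inequality $l\le|K-I|$ is exactly your $\sum_v(|A_v|+k_v-1)\ge 0$). Your first estimate is fine, and you correctly note that it requires the implicit hypothesis that $\phi$ has no purely (anti)holomorphic $\hbar^0$-monomials --- a hypothesis the paper also uses without stating it in the corollary, and which is supplied in the geometric application by \eqref{equation: Taylor-expansion-hermitian-metric}.

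The refinement, however, contains a non-sequitur. From ``every $\hbar^{\ge1}$-vertex automatically has $k_v\ge1$'' you conclude that ``at least one vertex improves the per-vertex bound to $|A_v|+k_v\ge2$''; but $k_v\ge1$ only yields $|A_v|+k_v\ge1$, and a vertex of type $\hbar\,\bar y^{B_v}$ achieves $|A_v|+k_v=1$ exactly. A graph all of whose internal vertices are of this type has $\hbar$-exponent exactly $\max\{|I|,|J|\}$. Concretely, for $n=1$ take $\phi=\hbar\bar y$ (degree $3$, and it vacuously satisfies the stated condition on $\phi_{0,I,J}$), $I=(1)$, $J=(0)$: then $\hbar^{-1}\int y\,e^{(-|y|^2+\hbar\bar y)/\hbar}=c\,\hbar+O(\hbar^2)$ with $c\ne0$, which is not $o(\hbar)$. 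So the refinement needs the further assumption that $\phi$ has no purely antiholomorphic monomials at positive $\hbar$-orders either; granting that, every vertex has $|A_v|\ge1$, hence $k_v=0\Rightarrow|A_v|\ge2$ and $k_v\ge1\Rightarrow|A_v|+k_v\ge2$, and the surplus of at least one unit does follow. In the intended application this extra input is exactly Lemma~\ref{lemma: local-volume-form-purely-holomorphic-derivatives-vanish}. To be fair, the paper's own proof silently skips the same configuration (its case $l\le 0$ with at least one interaction vertex present), so your gap coincides with an imprecision already in the corollary as stated; but as written your deduction of the strict surplus does not follow from the hypotheses you invoke.
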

\begin{proof}
Let $K=(k_1,\cdots, k_n)$ be the multi-index given by $k_l=\max\{i_l,j_l\}, 1\leq l\leq n$. The worst scenario is when $y^I\bar{y}^J$ together with terms in $e^{\phi/\hbar}$ form a multiple of $y^K\bar{y}^K$ so that we get full contraction. These terms coming from $e^{\phi/\hbar}$ must be a  multiple of
$$
\hbar^{-l}\cdot y^{K-I}\bar{y}^{K-J}
$$
for some $l$. Thus the leading term of the integral in the statement is $O(\hbar^{|K|-l})$. If $l\leq 0$, then there is nothing to show since $|K|-l\geq |K|\geq\max\{|I|,|J|\}$. Thus we assume that $l>0$. 
Since every monomial in $\phi/\hbar$ contains at least one $y^i$'s, it follows that 
\begin{equation}\label{equation: estimate-power-of-hbar}
l\leq|K-I|.
\end{equation}
From Lemma \ref{lemma: holomorphic-polynomials-orthonormla-basis}, we know that
$$
\hbar^{-n}\int y^I\bar{y}^J\cdot \frac{1}{\hbar^l}y^{K-I}\bar{y}^{K-J}=\hbar^{-n}\int\frac{y^K\bar{y}^K}{\hbar^l}=O\left(\hbar^{|K|-l}\right).
$$
The statement follows since $|K|-l\geq|K|-|K-I|=|I|$, and also $|K|-l\geq |J|$ by a similar argument. For the refinement under the additional condition on $\phi$, we only need to notice that the inequality \eqref{equation: estimate-power-of-hbar} can be refined to
$$
l\leq\frac{|K-I|}{2}<|K-I|.
$$
\end{proof}

We want to define the notion of orthogonal projection and formal Toeplitz operators using this formal inner product. To do so, we need the following technical theorem (which is also important in the sequel \cite{CLL-PartII} to this paper):
\begin{thm}\label{theorem: Toeplitz-expression}
	Suppose $\phi\in(\W_{\mathbb{C}^n})_3$ contains no purely holomorphic monomials.  For any  $f\in\mathcal{W}_{\mathbb{C}^n}$, there exists a unique $O_f\in\mathcal{W}_{\mathbb{C}^n}$ such that
	\begin{enumerate}
  		\item For any $s\in\mathcal{F}_{\mathbb{C}^n}$, the element  $T_{O_f}(s)\in\mathcal{F}_{\mathbb{C}^n}$ satisfies the following equalities:
    	$$
    	\langle T_{O_f}(s), y^I \rangle=\langle f\cdot s, y^I\rangle,
    	$$
    	for every multi-index $I$; here $T_{O_f}$ denotes the Bargmann-Fock action by $O_f$.
  		\item IF $f$ is a monomial, then the leading term of $O_f$ is exactly $f$, i.e.,
        $$
        O_f=f+\textit{higher order terms}.
        $$
 	\end{enumerate}
\end{thm}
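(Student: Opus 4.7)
My plan is to establish uniqueness first and then construct $O_f$ by induction on the filtration degree of $\mathcal{W}_{\mathbb{C}^n}$, relying in both steps on the fact that the $\phi = 0$ case is explicitly computable.

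For uniqueness, suppose $O_1, O_2 \in \mathcal{W}_{\mathbb{C}^n}$ both satisfy condition (1) for the same $f$. Setting $O := O_1 - O_2$ gives $\langle T_O(s), y^I\rangle = 0$ for every $s \in \mathcal{F}_{\mathbb{C}^n}$ and every $I$. By Lemma \ref{lemma: holomorphic-polynomials-orthonormla-basis} the Gram matrix $\{\langle y^I, y^J\rangle\}$ is the identity modulo higher filtration and hence invertible in the filtered sense, so $T_O$ vanishes identically on $\mathcal{F}_{\mathbb{C}^n}$. Faithfulness of the Wick representation then forces $O = 0$: writing $O = \sum_{k,P,Q} \hbar^k c_{k,P,Q}\, y^P \bar{y}^Q$, one recovers the coefficients $c_{k,P,Q}$ from $T_O(y^K)$ by triangular induction, using the explicit formula $T_{y^P \bar{y}^Q}(y^K) = \hbar^{|Q|}\frac{K!}{(K-Q)!}\, y^{P+K-Q}$.

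For existence, the free case $\phi = 0$ is handled by direct computation: via the Vandermonde identity for falling factorials, for each monomial $f = y^P \bar{y}^Q$ the element
$$O_f^{\mathrm{free}} := \sum_{J \leq \min(P,Q)} \binom{Q}{J}\binom{P}{J}\, J!\, \hbar^{|J|}\, y^{P-J}\bar{y}^{Q-J}$$
satisfies $T_{O_f^{\mathrm{free}}}(y^K) = \Pi_{\mathrm{free}}(f \cdot y^K)$ for all $K$, and its $\hbar^0$-part is exactly $f$. Extending $\mathbb{C}[[\hbar]]$-linearly gives $O_f^{\mathrm{free}}$ for all $f \in \mathcal{W}_{\mathbb{C}^n}$. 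For general $\phi$, iterate: set $O_f^{[0]} := O_f^{\mathrm{free}}$, and given $O_f^{[N]}$ define an error operator $E^{[N]} \colon \mathcal{F}_{\mathbb{C}^n} \to \mathcal{F}_{\mathbb{C}^n}$ via
$$\langle E^{[N]}(s),\, y^I\rangle := \langle f s - T_{O_f^{[N]}}(s),\, y^I\rangle,$$
well-defined by Gram-matrix invertibility. Expanding both sides via the Feynman graph expansion of $e^{\phi/\hbar}$, the contributions from graphs without $\phi$-vertices cancel after the initial step, and by Lemma \ref{lemma: formal-integral-preserves-filtration} each additional $\phi$-vertex strictly raises the filtration degree. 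Applying the free-case Wick realization to the leading filtered piece of $E^{[N]}$ produces $\Delta_N \in \mathcal{W}_{\mathbb{C}^n}$ in strictly higher filtration such that $O_f^{[N+1]} := O_f^{[N]} + \Delta_N$ cancels one more order of error. The sequence converges in the filtered topology of $\mathcal{W}_{\mathbb{C}^n}$ to an element $O_f$ satisfying (1). Condition (2) then follows because each correction $\Delta_N$ lies in filtration strictly greater than that of the leading part of $O_f^{\mathrm{free}}$, so the leading term of $O_f$ coincides with that of $O_f^{\mathrm{free}}$ and hence contains $f$ with coefficient one.

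The main obstacle I anticipate is disciplined tracking of the filtration at each inductive step: both the Gram-matrix inversion that defines $E^{[N]}$ and the Wick realization converting $E^{[N]}$ back into an element of $\mathcal{W}_{\mathbb{C}^n}$ mix filtration degrees nontrivially, so one must verify that the filtration really does strictly increase at each iteration. The essential combinatorial input is that $\phi \in (\mathcal{W}_{\mathbb{C}^n})_3$ has no purely holomorphic monomials, so every vertex in a $\phi$-graph connects through at least one $\bar{y}$-leg and produces a positive power of $\hbar$ upon Wick contraction, more than compensating the $1/\hbar$ in the exponent; this is what ultimately makes the filtration bookkeeping go through.
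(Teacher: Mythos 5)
Your overall architecture (solve the free case explicitly, then correct perturbatively using that every $\phi$-vertex carries positive filtration degree) is in the right spirit, but there are two concrete problems.

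First, your base case uses the wrong operator ordering. The paper's Bargmann--Fock action \eqref{equation: representation-Wick-on-Fock} puts the multiplication \emph{innermost}: $T_{y^P\bar y^Q}=(\hbar\partial_y)^Q\circ m_{y^P}$, so $T_{y^P\bar y^Q}(y^K)=\hbar^{|Q|}\tfrac{(P+K)!}{(P+K-Q)!}\,y^{P+K-Q}$, not $\hbar^{|Q|}\tfrac{K!}{(K-Q)!}\,y^{P+K-Q}$ as you write. With the paper's convention the Bargmann--Fock action of a monomial already \emph{is} the free Toeplitz operator $\Pi_{\mathrm{free}}\circ m_f$, so the free-case answer is simply $O_f^{\mathrm{free}}=f$; your Vandermonde formula is the symbol for the opposite (creation-outermost) ordering. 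This is not merely cosmetic: your $O_f^{\mathrm{free}}$ contains the extra terms $\hbar^{|J|}y^{P-J}\bar y^{Q-J}$, all of total degree $|P|+|Q|$ equal to that of $f$ in the grading $\deg(\hbar^k y^I\bar y^J)=2k+|I|+|J|$, so condition (2) --- that $O_f$ equals $f$ plus terms of strictly higher degree --- would already fail at the base case of your induction.

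Second, your inductive step asserts without proof that the error operator $E^{[N]}$ admits a ``free-case Wick realization,'' i.e.\ that it is of the form $T_{\Delta}$ for some $\Delta\in\W_{\mathbb{C}^n}$. A general filtration-raising $\mathbb{C}[[\hbar]]$-linear endomorphism of $\mathcal{F}_{\mathbb{C}^n}$ need not lie in the image of the Bargmann--Fock representation, so you must show that $E^{[N]}(y^K)$ depends on $K$ in the required differential-operator fashion; this is the crux and is missing. The paper avoids the issue entirely by never leaving the Wick algebra: it converts the defining condition into the algebraic equation $f\cdot e^{\phi/\hbar}=e^{\phi/\hbar}\star O_f$ (using that $\langle g,y^I\rangle$ for the $\phi$-weighted inner product equals the free pairing of $T_{e^{\phi/\hbar}}(g)$ with $y^I$, together with the invertibility of $e^{\phi/\hbar}$ from Lemma \ref{lem:classical-exp-equals-quantum-exp}), and solves that equation by iterating $f\cdot e^{\phi/\hbar}=e^{\phi/\hbar}\star f-\sum_{k\geq1}\hbar^kC_k(e^{\phi/\hbar},f)$, where each correction visibly raises the degree by at least one. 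If you recast your iteration as this identity in $\W_{\mathbb{C}^n}$, both gaps close; your uniqueness argument via faithfulness of the representation and triangularity of the Gram matrix is fine (the paper does not spell this out) once the action formula is corrected.
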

\begin{proof}
Given $s\in\mathcal{F}_{\mathbb{C}^n}$, suppose there exists $s'\in\mathcal{F}_{\mathbb{C}^n}$ such that 
\begin{equation}\label{equation: equivalent-defn-orthogonal-projection}
 T_{f\cdot e^{\phi/\hbar}}(s)=T_{e^{\phi/\hbar}}(s').
\end{equation}
There is the following straightforward computation:
\begin{align*}
 \int\left(e^{\phi/\hbar}\cdot s'\right)\cdot \bar{y}^I\cdot e^{-\frac{|y|^2}{\hbar}}
 & = \int T_{e^{\phi/\hbar}}(s')\cdot \bar{y}^I\cdot e^{-\frac{|y|^2}{\hbar}}\\
 & = \int T_{f\cdot e^{\phi/\hbar}}(s)\cdot\bar{y}^I\cdot e^{-\frac{|y|^2}{\hbar}}\\
 & = \int\left(f\cdot e^{\phi/\hbar}\cdot s\right)\cdot\bar{y}^I\cdot e^{-\frac{|y|^2}{\hbar}}
\end{align*}
for any purely antiholomorphic monomial $\bar{y}^I$;
here the first equality follows from the fact that $T_{e^{\phi/\hbar}}(s')$ is the orthogonal projection of $e^{\phi/\hbar}\cdot s'$ with respect to the standard Gaussian measure. 
So for every multi-index $I$, we have
$$
\int s'\cdot \bar{y}^I\cdot e^{\frac{-|y|^2+\phi(y,\bar{y})}{\hbar}}=\int fs\cdot\bar{y}^I\cdot e^{\frac{-|y|^2+\phi(y,\bar{y})}{\hbar}}.
$$
Hence we only need to solve the equation \eqref{equation: equivalent-defn-orthogonal-projection} for $s'$.

By Lemma \ref{lem:classical-exp-equals-quantum-exp}, $e^{\phi/\hbar}$ is invertible under the Wick product and its inverse is:
$$
\left(e^{\phi/\hbar}\right)^{-1}=\exp^\star\left(-\sum_{k=1}^\infty\frac{(-1)^{k+1}}{k}(e^{\phi/\hbar}-1)^k\right).
$$
From the following expansion
$$
-\sum_{k=1}^\infty\frac{(-1)^{k+1}}{k}\left(e^{\phi/\hbar}-1\right)^k
=-\sum_{k=1}^\infty\frac{(-1)^{k+1}}{k}\left(\frac{\phi}{\hbar}+\frac{1}{2!}\frac{\phi^2}{\hbar^2}+\cdots\right)^k,
$$
it is easy to see that each monomial in the expansion of $\left(e^{\phi/\hbar}\right)^{-1}$ satisfies the following property: in every $\hbar^{-k}$ term, the antiholomorphic components must have degrees at least $k$. By a similar argument as in the proof of Lemma \ref{lemma: extension-Bargmann-Fock-W-plus}, we see that there is a well-defined action of $\left(e^{\phi/\hbar}\right)^{-1}$ on $\mathcal{F}_{\mathbb{C}^n}$. 

Therefore we get the following explicit description of $s'$:
\begin{equation}\label{equation:formula-orthogonal-projection}
s'=T_{(e^{\phi/\hbar})^{-1}}\circ T_{f\cdot e^{\phi/\hbar}} (s).
\end{equation}
The next step is to look at the term $T_{f\cdot e^{\phi/\hbar}}(s)$ more closely. 
According to \eqref{equation: defn-Wick-product}, we have
$$
f\cdot e^{\phi/\hbar}=e^{\phi/\hbar}\star f-\sum_{k\geq 1}\hbar^k C_k\left(e^{\phi/\hbar}, f\right).
$$
Since $C_k(-,-)$ is a bi-differential operator, the term $C_k\left(e^{\phi/\hbar},f\right)$ is still of the form $e^{\phi/\hbar}\cdot g_k(y,\bar{y})$ for some $g_k\in\mathcal{W}_{\mathbb{C}^n}$ and satisfies the condition that
$$
\text{leading degree of}\ g_k-\text{leading degree of}\ f\geq k.
$$ 
Thus by an induction on the degree, this procedure can be iterated, and we can find  $O_f\in\W_{\mathbb{C}^n}$ whose first terms are exactly $f$ such that $f\cdot e^{\phi/\hbar}=e^{\phi/\hbar}\star O_f$. This implies that
\begin{equation}\label{equation: relation-Taylor-expansion-formal-Toeplitz}
T_{f\cdot e^{\phi/\hbar}}(s)=(T_{e^{\phi/\hbar}}\circ T_{O_f})(s).
\end{equation}
In particular, we see that $s'=T_{O_f}(s)$.
\end{proof}

By the first statement of this theorem, we can make the following:
\begin{defn}
 	The {\em orthogonal projection operator}
	\begin{equation}\label{equation: formal-Toeplitz-operator}
		\pi_{\phi}: \mathcal{W}_{\mathbb{C}^n}\rightarrow \mathcal{F}_{\mathbb{C}^n}=\mathbb{C}[[y_1,\cdots, y_n]][[\hbar]].
	\end{equation}
	is defined by requiring that
	$$
	\langle f, y^I\rangle=\langle\pi_{\phi}(f), y^I\rangle
	$$
	for all multi-indices $I$; here $\langle-,-\rangle$ is the inner product  defined by equation \eqref{defn:formal-inner-product}. 
\end{defn}

We can also define the formal Toeplitz operators:
\begin{defn}\label{defn:formal-Toeplitz-operator}
	The {\em formal Toeplitz operator} $T_{\phi,f}$ associated to $f\in\mathcal{W}_{\mathbb{C}^n}$ is defined as the composition of multiplication by $f$ and the projection $\pi_{\phi}$:
	$$
	T_{\phi,f}:=\pi_{\phi}\circ m_{f}.
	$$
\end{defn}

Theorem \ref{theorem: Toeplitz-expression} gives an explicit algorithm to compute $T_{\phi,f}$: we only need to find $O_f\in\W_{\mathbb{C}^n}$ associated to $f$, and then $T_{\phi,f}=T_{O_f}$.  A simple observation is that if $f=f(y)$ is a holomorphic power series, then $T_f$ is simply the multiplication $m_f$ since then $O_f=f$.  


Here we give a description of the adjoint operator of a formal Toeplitz operator:
\begin{lem}\label{lemma: formal-toeplitz-adjoint-operator}
	Suppose $\phi\in\W_{\mathbb{C}^n}$ is real, i.e., $\phi=\bar{\phi}$. Then for any $f\in\mathcal{W}_{\mathbb{C}^n}$, the adjoint of the formal Toeplitz operator $T_{\phi,f}$ is given by $T_{\phi,\bar{f}}$. In particular, $T_{\phi,f}$ is self-adjoint if and only if $f\in\W_{\mathbb{C}^n}$ is real. 
\end{lem}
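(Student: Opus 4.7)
The plan is to reduce the claim to the defining property of $\pi_\phi$ together with an elementary sesquilinearity identity for the formal inner product. The key observation is that the formal integral in \eqref{defn:formal-inner-product} is built from a commutative multiplication of formal power series, so for any $s,t\in\mathcal{F}_{\mathbb{C}^n}$ and $f\in\mathcal{W}_{\mathbb{C}^n}$ we automatically have
\[
\langle fs,t\rangle \;=\; \frac{1}{\hbar^n}\int fs\bar{t}\,e^{(-|y|^2+\phi)/\hbar}\;=\;\frac{1}{\hbar^n}\int s\,\overline{\bar{f}t}\,e^{(-|y|^2+\phi)/\hbar}\;=\;\langle s,\bar{f}t\rangle,
\]
with no reality assumption required on $\phi$. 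This is the "multiplication adjoint" identity.

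Next I would extract the two projection identities from the definition of $\pi_\phi$. By definition $\langle\pi_\phi(u),y^I\rangle=\langle u,y^I\rangle$ for every multi-index $I$. Since the formal inner product preserves the $\hbar$-adic filtration (Lemma \ref{lemma: formal-integral-preserves-filtration}), this extends by $\mathbb{C}((\sqrt{\hbar}))$-linearity and $\hbar$-adic continuity to
\[
\langle\pi_\phi(u),t\rangle=\langle u,t\rangle \qquad \text{for all }t\in\mathcal{F}_{\mathbb{C}^n}.
\]
Now the hypothesis that $\phi$ is real enters: Lemma \ref{lemma: formal-inner-product-is-hermitian} gives Hermiticity $\langle\cdot,\cdot\rangle=\overline{\langle\cdot,\cdot\rangle^{\mathrm{sw}}}$, which combined with the previous identity yields the "mirror" version
\[
\langle s,\pi_\phi(u)\rangle = \langle s,u\rangle\qquad\text{for all }s\in\mathcal{F}_{\mathbb{C}^n}.
\]

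With these three identities the adjoint statement is a four-line chain:
\[
\langle T_{\phi,f}(s),t\rangle = \langle \pi_\phi(fs),t\rangle = \langle fs,t\rangle = \langle s,\bar f t\rangle = \langle s,\pi_\phi(\bar f t)\rangle = \langle s,T_{\phi,\bar f}(t)\rangle,
\]
using Definition \ref{defn:formal-Toeplitz-operator} at both ends, the first projection identity in the second step, the multiplication adjoint identity in the third, and the mirror projection identity in the fourth.

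For the "in particular" clause, one direction is immediate: if $\bar f=f$ then $T_{\phi,f}^{\,*}=T_{\phi,\bar f}=T_{\phi,f}$. For the converse, self-adjointness gives $T_{\phi,f-\bar f}=0$, and I would invoke the explicit description in Theorem \ref{theorem: Toeplitz-expression}: the Bargmann-Fock action of $\mathcal{W}_{\mathbb{C}^n}$ on $\mathcal{F}_{\mathbb{C}^n}$ is faithful, so $T_{O_{f-\bar f}}=0$ forces $O_{f-\bar f}=0$; since the leading term of $O_g$ recovers the leading term of $g$, a degreewise induction then yields $f=\bar f$. The only mildly delicate point in the whole argument is justifying the extension of the defining property of $\pi_\phi$ from test polynomials $y^I$ to arbitrary $t\in\mathcal{F}_{\mathbb{C}^n}$, but the filtration-preservation in Lemma \ref{lemma: formal-integral-preserves-filtration} makes this extension routine.
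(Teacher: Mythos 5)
Your argument is correct and is essentially the paper's own proof: the same four-step chain $\langle T_{\phi,f}(s),t\rangle=\langle fs,t\rangle=\langle s,\bar f t\rangle=\langle s,T_{\phi,\bar f}(t)\rangle$, with the reality of $\phi$ entering exactly where you say, via Hermiticity to get the second-slot projection identity. Your added justification for extending the defining property of $\pi_\phi$ from monomials $y^I$ to all of $\mathcal{F}_{\mathbb{C}^n}$, and your proof of the converse in the ``in particular'' clause via faithfulness of the Bargmann--Fock action and the leading-term property of $O_g$, are both sound refinements of details the paper leaves implicit.
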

\begin{proof}
	According to the definition of formal Toeplitz operators, for any elements $s_1,s_2\in\mathcal{F}_{\mathbb{C}^n}$, we have
	\begin{align*}
	\langle T_{\phi,f}(s_1),s_2\rangle
	& = \int T_{\phi,f}(s_1)\cdot\bar{s}_2\cdot e^{\frac{-|y|^2+\phi(y,\bar{y})}{\hbar}}\\
	& = \int f\cdot s_1\cdot\bar{s}_2\cdot e^{\frac{-|y|^2+\phi(y,\bar{y})}{\hbar}}\\
	& = \int s_1\cdot \overline{\bar{f}\cdot s_2}\cdot e^{\frac{-|y|^2+\phi(y,\bar{y})}{\hbar}}\\
	& = \langle s_1, T_{\phi,\bar{f}}(s_2)\rangle.
	\end{align*}
\end{proof}

\subsection{Local asymptotics via formal Hilbert space}\label{subsection: local-asymptotic}
\

Let $\mathbb{D}^{2n}\subset\C^n$ be a ball centered at $0$, with $dvol_{\mathbb{D}^{2n}}:=(\sqrt{-1})^ne^{\psi(z,\bar{z})}dz_1d\bar{z}_1\cdots dz_nd\bar{z}_n$ the volume form. For every smooth function $f$ on $\mathbb{D}^{2n}$, we will let $J_f\in\mathcal{W}_{\C^n}$ denote the Taylor expansion of $f$ at the origin:
$$
J_f:=\sum_{I,J\geq 0}\frac{1}{I!J!}\frac{\partial^{|I|+|J|}f}{\partial z^I\partial \bar{z}^J}(0)y^I\bar{y}^J.
$$
The previous algebraic computations together with the Feynman-Laplace Theorem \ref{theorem: Feynman-Laplace-Gaussian-integral} give the following asymptotics as $\hbar\rightarrow 0^+$:
\begin{thm}\label{theorem: asymptotics-local}
	 Suppose $\varphi(z,\bar{z})$ is a smooth function on $\mathbb{D}^{2n}$ which attains its unique minimum at the origin.  
	Let $f,\varphi,s$ be functions on $\mathbb{D}^{2n}$ such that $\bar{\partial}s=0$, $\varphi$ has a unique minimum at the origin and satisfies 
	\begin{equation}\label{equation: Taylor-expansion-varphi}
	J_{\varphi}=|y|^2+\sum_{I,J\geq 2}\frac{1}{I!J!}\frac{\partial^{|I|+|J|}\varphi}{\partial z^I\partial \bar{z}^J}(0)y^I\bar{y}^J. 
	\end{equation}
	There exist complex numbers $a_{k,I}$ so that for every fixed multi-index $J$, we have the following asymptotics as $\hbar\rightarrow 0$:
	\begin{equation}\label{equation: formula-local-Toeplitz}
		\frac{1}{\hbar^n}\int_{\mathbb{D}^{2n}}\bigg(f\cdot s-\sum_{2k+|I|\leq r} \frac{1}{\hbar^k}a_{k,I}z^I\bigg)\cdot\bar{z}^Je^{-\frac{\varphi(z,\bar{z})}{\hbar}}dvol_{\mathbb{D}^{2n}}=O(\hbar^{r+1}). 
	\end{equation}
	In particular, these $a_{k,I}$'s only depend on the Taylor expansions of $f,s,\varphi$ and $\psi$ at the origin. 
\end{thm}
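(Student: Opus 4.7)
My approach has three stages: (i) convert the actual integral on $\mathbb{D}^{2n}$ to a formal integral via the Feynman-Laplace Theorem \ref{theorem: Feynman-Laplace-Gaussian-integral}; (ii) realize the $a_{k,I}$ as the coefficients of the orthogonal projection of $J_f\cdot J_s$ in the formal Hilbert space; (iii) bound the resulting truncation error using the refined orthogonality estimate of Corollary \ref{corollary: leading-term-inner-product-polynomial}.

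For stage (i), write $dvol_{\mathbb{D}^{2n}}=(\sqrt{-1})^n e^{\psi(z,\bar z)}dz_1d\bar z_1\cdots dz_nd\bar z_n$, so that the integrand takes the form $h(z,\bar z)e^{(-\varphi+\hbar\psi)/\hbar}$. Hypothesis \eqref{equation: Taylor-expansion-varphi} gives $\varphi$ a non-degenerate minimum at the origin with normalized quadratic part $|y|^2$, so Theorem \ref{theorem: Feynman-Laplace-Gaussian-integral} produces an asymptotic expansion in $\hbar$ depending only on the Taylor jets of $f,s,\varphi,\psi$ at the origin. After factoring out the overall constant $e^{\psi(0)}$, this expansion coincides with the formal integral of Definition \ref{definition: formal_integral} for
$$\phi:=(J_\varphi-|y|^2)+\hbar\bigl(J_\psi-\psi(0)\bigr)\in(\mathcal{W}_{\mathbb{C}^n})_3.$$
Crucially, \eqref{equation: Taylor-expansion-varphi} forces the classical part of $\phi$ to vanish in every bidegree $(|I|,|J|)$ with $|I|\le 1$ or $|J|\le 1$, so the refinement hypothesis of Corollary \ref{corollary: leading-term-inner-product-polynomial} is satisfied.

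For stage (ii), $\bar\partial s=0$ yields $J_s\in\mathcal{F}_{\mathbb{C}^n}$, and hence $J_f\cdot J_s\in\mathcal{W}_{\mathbb{C}^n}$. The orthogonal projection $\pi_\phi(J_fJ_s)\in\mathcal{F}_{\mathbb{C}^n}=\mathbb{C}[[y]][[\hbar]]$ is characterized by $\langle J_fJ_s-\pi_\phi(J_fJ_s),y^J\rangle=0$ for every multi-index $J$; its existence and uniqueness follow from the fact that, by Lemma \ref{lemma: holomorphic-polynomials-orthonormla-basis}, the pairing matrix $\{\langle y^I,y^J\rangle\}$ is upper-triangular with invertible diagonal modulo $\hbar$. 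Expanding $\pi_\phi(J_fJ_s)=\sum_{k,I}a_{k,I}\hbar^ky^I$ defines the coefficients $a_{k,I}$ (interpreted with the paper's sign convention of \eqref{equation: intro-asymptotic-general-Kahler}). The error in \eqref{equation: formula-local-Toeplitz} then equals the pairing of $y^J$ with the filtration tail $\pi_\phi(J_fJ_s)-\sum_{2k+|I|\leq r}a_{k,I}\hbar^ky^I$, which lies in $(\mathcal{F}_{\mathbb{C}^n})_{r+1}$.

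The technical heart is stage (iii). A direct application of Lemma \ref{lemma: formal-integral-preserves-filtration} yields only $O(\hbar^{\lceil(r+1)/2\rceil})$, which is insufficient. The key is to extract an additional factor of $\hbar$ from off-diagonal pairings, and this is exactly the role of Corollary \ref{corollary: leading-term-inner-product-polynomial}: under the vanishing-of-low-degree-classical-terms property of $\phi$ established in stage (i), one has $\langle y^I,y^J\rangle=O(\hbar^{\max\{|I|,|J|\}+1})$ whenever $I\neq J$. Splitting the tail pairing into its diagonal ($I=J$) and off-diagonal ($I\neq J$) contributions and combining these refined estimates with the filtration condition $2k+|I|\geq r+1$ delivers the bound $O(\hbar^{r+1})$ for every $|J|\leq r$. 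The main obstacle is precisely this power-counting; a clean alternative is to construct $\{a_{k,I}\}$ recursively on the filtration degree $2k+|I|$, using Lemma \ref{lemma: holomorphic-polynomials-orthonormla-basis} to invert an upper-triangular linear system at each step so that every newly-chosen coefficient precisely cancels the leading term of the remaining error.
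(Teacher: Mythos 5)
Your proposal is correct and follows essentially the same route as the paper's proof: reduce the integral to a formal integral via the Feynman--Laplace theorem, take the $a_{k,I}$ to be the coefficients of the formal orthogonal projection of $J_f\cdot J_s$ (which is exactly the paper's $T_{(e^{J_\phi/\hbar})^{-1}}\circ T_{J_f\cdot e^{J_\phi/\hbar}}(J_s)$ from Theorem \ref{theorem: Toeplitz-expression}), and control the truncation error with Corollary \ref{corollary: leading-term-inner-product-polynomial}; your stage (iii) merely spells out the power counting that the paper leaves implicit. One small slip: to match $e^{-\varphi/\hbar+\psi}$ with $e^{(-|y|^2+\phi)/\hbar}$ you need $\phi=|y|^2-J_\varphi+\hbar J_\psi$ (up to the constant $\hbar\psi(0)$), i.e.\ the opposite sign on the $\varphi$-part from what you wrote, though this does not affect any of the structural properties of $\phi$ you use.
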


\begin{proof}
We define a function $\phi=|z|^2-\varphi(z,\bar{z})+\hbar\psi(z,\bar{z})$.
Then we define $a_{k,I}$'s via the following equation:
$$
\sum_{k,|I|\geq 0}a_{k,I}\hbar^k\cdot z^I=T_{(e^{J_{\phi}/\hbar})^{-1}}\circ T_{J_f\cdot e^{J_{\phi}/\hbar}}(J_s).
$$
From Theorem \ref{theorem: Feynman-Laplace-Gaussian-integral},  we have, for any $\bar{z}^J$, the following equality of asymptotic $\hbar$-expansions
$$
\frac{1}{\hbar^n}\cdot\int_{\mathbb{D}^{2n}}f\cdot s\cdot\bar{z}^Je^{-\frac{\varphi(z,\bar{z})}{\hbar}}dvol_{\mathbb{D}^{2n}}
=
\frac{1}{\hbar^n}\int J_f\cdot J_s\cdot\bar{y}^Je^{\frac{-|y|^2+J_{\phi}}{\hbar}}.
$$
On the other hand, there is the following identity by Theorem \ref{theorem: Toeplitz-expression}:
$$
\frac{1}{\hbar^n}\int J_f\cdot J_s\cdot\bar{y}^Je^{\frac{-|y|^2+J_{\phi}}{\hbar}}=\frac{1}{\hbar^n}\int\bigg(\sum_{k,|I|\geq 0}a_{k,I}\hbar^k\cdot y^I\bigg)\cdot\bar{y}^Je^{\frac{-|y|^2+J_{\phi}}{\hbar}}.
$$
Now equation \eqref{equation: formula-local-Toeplitz} follows from Corollary \ref{corollary: leading-term-inner-product-polynomial} since the truncated higher order terms will only contribute to integrals of type $o(\hbar^{r+1})$. 
\end{proof}

\section{Geometric representations of the Berezin-Toeplitz quantization}

In this section, we construct a family of representations of the Berezin-Toeplitz deformation quantization on a K\"ahler manifold $X$ parametrized by its points, and describe some basic properties such as locality and irreducibility.

The organization of this section is as follows:
In Section \ref{subsection: asymptotic-sequences-on-C-n}, we use $\C^n$ as the motivating example to illustrate the idea behind the general definition of admissible sequences. 
In Section \ref{subsection: asymptotic-representation}, we construct representations of the Berezin-Toeplitz deformation quantization using peak sections (whose properties are reviewed in Appendix \ref{section: peak-section}), which reduce the proof of Theorem \ref{theorem: intro_representation} to the local computations in Section \ref{section:perturbation-Segal-Bargmann}.
In Section \ref{subsection: locality-irreducibility-representation}, we prove locality and (modified) irreducibility of our representations.

\subsection{Admissible sequences on $\C^n$}\label{subsection: asymptotic-sequences-on-C-n}
\

Recall that the prequantum line bundle $L$ on $\mathbb{C}^n$ is trivialized by a global holomorphic frame $\mathbb{1}$. The Hermitian inner product of $L^{\otimes m}$ under this trivialization is given by
$$
h^m(\mathbb{1}^{\otimes m},\mathbb{1}^{\otimes m})=e^{-m\cdot|z|^2}.
$$
We would like to define an action of $C^\infty(\C^n)[[\hbar]]$ on $\mathcal{F}_{\C^n}$ such that its restriction to polynomials is exactly the Bargmann-Fock action.  To do this, we first apply asymptotic analysis to give an equivalent description of $\mathcal{F}_{\C^n}$.

First of all, we consider the vector space
$$
V:=\prod_{r\geq 0}\left(\prod_{m\geq 0}H^0(\C^n,L^{\otimes m})\right),
$$
an element of which is a double sequence $\alpha=(\alpha_{m,r})$ with $\alpha_{m,r}\in H^0(\C^n,L^{\otimes m})$.

We also consider the map
\begin{equation}\label{equation: map-Fock-to-admissible-sequence}
F:\mathcal{F}_{\C^n}\rightarrow V,\quad a=\sum_{k,I}a_{k,I}\hbar^k z^I\mapsto \alpha=\{\alpha_{m,r}\},
\end{equation}
defined by setting 
\begin{equation*}
\alpha_{m,r} := \left(\sum_{2k+|I|\leq r}m^{-k}\cdot a_{k,I}z^I \right)\otimes\mathbb{1}^m\in H^0(\C^n,L^{\otimes m}). 
\end{equation*}
If the element in $\mathcal{F}_{\C^n}$ is of the form $\sum_{I}a_Iz^I$, i.e., it does not include $\hbar$, then each $\alpha_{m,r}$ is a holomorphic section of $L^{\otimes m}$ which is a polynomial of degree $\leq r$ truncated from $\alpha$ under the trivialization $\mathbb{1}^{\otimes m}$. For general elements, $\hbar^k$ is mapped to $1/m^k$ in the corresponding components of the double sequence. 

We now consider an action of smooth functions on the image of the map \eqref{equation: map-Fock-to-admissible-sequence}. Let $f$ be any smooth function on $\C^n$. It is clear that the Toeplitz operator $T_{f,m}$ is in general not well-defined since $\C^n$ is noncompact. We apply the asymptotic analysis in Section \ref{subsection: local-asymptotic} to obtain the following 

\begin{prop}\label{proposition: Toeplitz-preserves-admissible-sequences}
Let $f$ be any smooth function on $\C^n$, and let $F(a)=\{\alpha_{m,r}\}$ be defined as above. Then there exists $b=\sum_{k,I}\hbar^kb_{k,I}z^I\in\mathcal{F}_{\C^n}$ with $F(b)=\{\beta_{r,m}\}$, satisfying the following asymptotics as $m\rightarrow\infty$:
\begin{equation}\label{equation: asymptotic-flat-case}
m^n\cdot\int_{\C^n}(f\cdot\alpha_{m,r}-\beta_{m,r} )\cdot\bar{z}^J\cdot e^{-m\cdot|z|^2}=O\left(\frac{1}{m^{r+1}}\right)
\end{equation}
for every fixed $r\geq 0$ and multi-index $J$. In particular, the formal power series $b\in\mathcal{F}_{\C^n}$ is uniquely determined by $\alpha$ and the Taylor expansion of $f$ at the origin $0\in\C^n$.  
\end{prop}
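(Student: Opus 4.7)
The proposition is the flat $\C^n$ instance of the local asymptotic analysis of Section~\ref{section:perturbation-Segal-Bargmann}, so the proof proceeds by specializing Theorem~\ref{theorem: asymptotics-local} to $\varphi(z,\bar z)=|z|^2$ and $\psi=0$ (i.e.\ $\phi=0$) under the identification $\hbar=1/m$. I will define $b$ explicitly via the holomorphic Bargmann-Fock action, expand both sides of \eqref{equation: asymptotic-flat-case} by Feynman-Laplace, and match coefficients.

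First I would set
\[
b \;:=\; T_{J_f}(a) \;\in\; \mathcal{F}_{\C^n},
\]
where $J_f=\sum_{K,L}f_{K,L}\,y^{K}\bar y^{L}$ is the Taylor series of $f$ at the origin and $T_{J_f}$ is the Bargmann-Fock action of equation~\eqref{equation: representation-Wick-on-Fock}. Because each antiholomorphic generator $\bar y_j$ acts as $\hbar\,\partial_{y_j}$, the coefficient of $\hbar^{k'}y^{I'}$ in $b$ is a finite sum of products of Taylor coefficients $f_{K,L}$ with coefficients $a_{k,I}$; thus $b$ is a well-defined element of $\mathcal{F}_{\C^n}$.

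Next I would apply Theorem~\ref{theorem: asymptotics-local} on a disk $\mathbb{D}^{2n}\subset\C^n$, noting that the tail $\int_{\C^n\setminus\mathbb{D}^{2n}}$ is $O(e^{-cm})$ for some $c>0$ and hence absorbed into $O(1/m^{r+1})$. This produces an asymptotic expansion of $m^n\int_{\C^n}f\cdot\alpha_{m,r}\cdot\bar z^{J}\,e^{-m|z|^2}$ in powers of $1/m$ whose coefficients depend only on $J_f$ and the $a_{k,I}$. On the other hand, the Gaussian moment identity $m^n\int z^{I'}\bar z^{J}\,e^{-m|z|^2}\propto\delta_{I',J}\,J!/m^{|J|}$ expresses $m^n\int\beta_{m,r}\bar z^{J}e^{-m|z|^2}$ as a polynomial in $1/m$ whose coefficients are the $b_{k',J}$. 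Matching the two expansions modulo $O(1/m^{r+1})$ reproduces exactly the defining equation for the Bargmann-Fock action, confirming $b=T_{J_f}(a)$; uniqueness of $b$ then follows by varying $J$ and $r$ and reading off each $b_{k',I'}$ inductively from the $m^{-\ell}$ coefficients.

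\textbf{Main obstacle.} The chief technical point is bookkeeping: one must reconcile the truncation $2k+|I|\leq r$ on the input (defining $\alpha_{m,r}$ and $\beta_{m,r}$) with the natural degree filtration on the Bargmann-Fock output, and verify that coefficients of $T_{J_f}(a)$ lying outside the output cutoff $2k'+|I'|\leq r$ contribute only at order $O(1/m^{r+1})$ to the pairing against $\bar z^{J}$. This rests on Corollary~\ref{corollary: leading-term-inner-product-polynomial}, which bounds the off-diagonal Gaussian moments $\hbar^{-n}\int y^{I}\bar y^{J}e^{(-|y|^2+\phi)/\hbar}$ for $I\neq J$ and is the technical engine driving the whole local analysis.
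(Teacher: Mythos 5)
Your proof is correct and follows essentially the same route as the paper's: the paper's own proof is a two-line reduction to Theorem \ref{theorem: asymptotics-local}, whose defining formula for the output coefficients specializes, in the flat case $\phi=0$, to exactly your $b=T_{J_f}(a)$ given by the holomorphic Bargmann-Fock action. Your added remarks on the tail estimate and on the truncation bookkeeping via Corollary \ref{corollary: leading-term-inner-product-polynomial} only make explicit what the paper leaves implicit (the exponential tail bound tacitly requires $f$ to have sub-Gaussian growth, but the proposition's integral over $\C^n$ already presupposes this).
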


\begin{proof}
Explicitly, we need to prove the following:
$$
m^n\cdot\int_{\C^n}(f\cdot\alpha_{m,r}-\sum_{2k+|I|\leq r}\frac{1}{m^k}\cdot b_{k,I}z^I )\cdot\bar{z}^J\cdot e^{-m\cdot|z|^2}=O\left(\frac{1}{m^{r+1}}\right).
$$
Since $\alpha_{m,r}$'s are truncated from the same formal power series, we see that $\{f\cdot\alpha_{m,r}\}$ has the same property. Thus the result follows from Theorem \ref{theorem: asymptotics-local}. 
\end{proof}
\begin{eg}
Let us consider the simplest example where $n=1$, and let $a=z$. Then the double sequence $\{\alpha_{m,r}\}$ is explicitly defined by
$
\alpha_{m,r} := z\otimes\mathbb{1}^m. 
$
Let $f=\bar{z}$, then a simple computation of the Wick ordering gives
$
\hbar\frac{\partial}{\partial z}(z)=\hbar. 
$
It follows that
$$b_{k,I} = 
\begin{cases}
1, & (k,I)=(1,0),\\
0, &  \textit{otherwise}.
\end{cases}$$
\end{eg}

We can interpret Proposition \ref{proposition: Toeplitz-preserves-admissible-sequences} as follows. Let $T_{f,m}$ denote the Toeplitz operators on $H^0(\C^n,L^{\otimes m})$ associated to the function $f$, then the double sequence $\{T_{f,m}(\alpha_{m,r})\}\in V$ can be ``approximated'' by a vector in the image of $F$. To make this precise, we define the subspace of {\em admissible sequences} $V_0\subset V$:
\begin{defn}\label{definition: admissible-sequence-flat-case}
We call $\alpha=\{\alpha_{m,r}\}\in V$ an {\em admissible sequence} if there exists $b=\sum_{k,I}b_{k,I}\hbar^k z^I\in \mathcal{F}_{\C^n}$ with  $F(b)=\{\beta_{r,m}\}$ such that, for every fixed $r > 0$, we have 
$$
m^n\cdot\int_{\C^n}(\alpha_{m,r}-\beta_{m,r} )\cdot\bar{z}^J\cdot e^{-m\cdot|z|^2}=O\left(\frac{1}{m^{r+1}}\right)
$$
for any multi-index $J$.
\end{defn}



It is clear that there are inclusions $F(V)\subset V_0\subset V$. Furthermore, there is a natural equivalence relation $\sim$ on $V$: If $\alpha_i\in V_0, i=1,2$ are two admissible sequences with $b_i$ their corresponding elements in $\mathcal{F}_{\C^n}$ respectively, then we say $\alpha_1$ is equivalent to $\alpha_2$ if $b_1=b_2\in\mathcal{F}_{\C^n}$.

\subsection{Construction of the representation}\label{subsection: asymptotic-representation}
\

For a general compact K\"ahler manifold $X$ equipped with a prequantum line bundle $L$, we define
$$V := \prod_{r\geq 0}\left(\prod_{m\geq 0} H^0(X,L^{\otimes m})\right)$$
as before.
The situation is more complicated than $\C^n$ because $L$ is non-trivial and there are no obvious global holomorphic sections analoguous to $z^I\otimes\mathbb{1}^m$ on $\C^n$. The best replacement (or approximation) for the polynomial sections on $\C^n$ are given by so-called {\em peak sections}, with which we can define double sequences of holomorphic sections with asymptotic properties similar to equation \eqref{equation: asymptotic-flat-case}:


\begin{defn}\label{definition: asymptotic_sequence}
For every point $z_0\in X$, we fix a set $\{S_{m,p,r}\}$ of normalized peak sections centered at $z_0$, as introduced in Section \ref{section: peak-section}. A sequence of holomorphic sections $\alpha = \{\alpha_{m,r}\in H^0(X,L^{\otimes m})\}$, regarded as an element in $V$, is called an {\em admissible sequence at $z_0$} if it satisfies the following two conditions:
\begin{enumerate}
 	\item For every fixed $r$, the norm of the sequence $\{\alpha_{m,r}\}_{m>0}$ has a uniform bound: $$||\alpha_{m,r}||_m\leq C_r.$$
 	\item There is a sequence of complex numbers $\{a_{p,k}\}_{p,k\geq 0}$ such that, for each fixed $r>0$,  we have
	\begin{equation}\label{equation: asymptotic-general-Kahler}
	\langle\alpha_{m,r}-\sum_{2k+|p|\leq r}a_{p,k}\cdot\frac{1}{m^k}\cdot S_{m,p,r+1},\quad S_{m,q,r+1}\rangle_m=O\left(\frac{1}{m^{r+1}}\right),
	\end{equation}
	for any multi-index $q$ with $|q|\leq r$.
\end{enumerate}
We define the subspace $V_{z_0}\subset V$ as the $\mathbb{C}$-linear span of admissible sequences at $z_0$.
\end{defn}
Equation \eqref{equation: asymptotic-general-Kahler} is the analogue of equation \eqref{equation: asymptotic-flat-case} in the flat case.  According to Lemma \ref{lemma: inner-product-peak-section-asymptotic}, the coefficients $a_{p,k}$ are uniquely determined.

\begin{rmk}
	The index $r$ in admissible sequences corresponds to the degree in the Wick algebra and Bargmann-Fock space. 
\end{rmk}

The complex numbers $\{a_{p,k}\}$ are called the {\em coefficients} of the admissible sequence $\alpha$. Note that they are independent of both the tensor power $m$ and the {\em weight index} $r$. 
The coefficients define a natural equivalence relation $\sim$ on $V_{z_0}$, namely, $\alpha$ is equivalent to $\beta$ (denoted as $\alpha \sim \beta$) if and only if the coefficients of $\alpha-\beta$ are all $0$.

\begin{rmk}
	It follows from this definition that, for each fixed $r$, even if we change finitely many terms of the double sequence $\{\alpha_{m,r}\}$, its equivalence class remains the same (cf. direct limits). 
\end{rmk}

The vector space we would like to construct is then simply the quotient by this equivalence relation:
$$
H_{z_0}:=V_{z_0}/\sim.
$$
It follows from asymptotics of inner products of peak sections $S_{m,p,r}$'s as $m\rightarrow \infty$ and equation \eqref{equation: asymptotic-general-Kahler} that $H_{z_0}$ is a formal Hilbert space.  

\begin{rmk}
	The vector space $H_{z_0}$ is defined as a sub-quotient, instead of just as a linear span of peak sections. This is because in general Toeplitz operators do {\em not} preserve the space of peak sections.
\end{rmk}

We now give some examples of admissible sequences:

\begin{eg}\label{example: asymptotic-sequences}
	Suppose we fix any multi-index $q$. Then we define an admissible sequence $\alpha$ as follows: we let $\alpha_{m,r}:=0$ if $m$ is too small and there is no (normalized) peak section of $L^{\otimes m}$ corresponding to the index $r$, and let $\alpha_{m,r}=S_{m,q,r+1}$ be simply the normalized peak section. It is then easy to see that this is indeed an admissible sequence with coefficients $a_{p,r}=0$ if $(p,r)\not=(q,0)$ and $a_{q,0}=1$.   
\end{eg}

\begin{eg}\label{example: equivalent-asymptotic-sequences}
	We give an example of equivalent admissible sequences. Let $\alpha$ be the admissible sequence as in the previous example. We now construct a sequence also consisting of normalized peak sections similar to $\alpha$ but with a higher order error term. Namely, we let $\beta=\{\beta_{m,r}\}$ be the admissible sequence given by $\beta_{m,r}=S_{m,q,r+2}$. It is easy to show that $\alpha\sim\beta$. 
\end{eg}

It is not difficult to see that the admissible sequences in Example \ref{example: asymptotic-sequences} form a basis of $V_{z_0}$ as a $\C$-vector space, and thus every vector can be written as $\{a_{p,k}\}$.  More precisely, we have the following lemma:
\begin{lem}
	We have the following isomorphism of $\mathbb{C}$-vector spaces:
	\begin{equation}\label{equation: asymptotic-sequence-isomorphic-to-Fock-space}
		H_{z_0}\cong \mathbb{C}[[y_1,\cdots, y_n]][[\hbar]].
	\end{equation}
\end{lem}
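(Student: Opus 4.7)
The plan is to exhibit an explicit $\mathbb{C}$-linear bijection from $H_{z_0}$ to $\C[[y_1,\ldots,y_n]][[\hbar]]$ by extracting the coefficients $\{a_{p,k}\}$ attached to each admissible sequence. Specifically, I would define
\[
\Phi: V_{z_0} \longrightarrow \C[[y_1,\ldots,y_n]][[\hbar]], \qquad \Phi(\alpha) := \sum_{p, k \geq 0} a_{p,k}\, \hbar^k y^p,
\]
using the coefficients uniquely associated to $\alpha$ by condition (2) of Definition \ref{definition: asymptotic_sequence}; their uniqueness is guaranteed by Lemma \ref{lemma: inner-product-peak-section-asymptotic} (invoked in the sentence immediately following \eqref{equation: asymptotic-general-Kahler}). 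Linearity of $\Phi$ follows from bilinearity of the formal pairing in \eqref{equation: asymptotic-general-Kahler}. By the very definition of the relation $\sim$, the map $\Phi$ descends to $H_{z_0} = V_{z_0}/{\sim}$, and the resulting map is tautologically injective: $\Phi([\alpha]) = 0$ forces every coefficient $a_{p,k}$ of $\alpha$ to vanish, which is exactly the condition $\alpha \sim 0$.

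For surjectivity, given any $b = \sum a_{p,k}\hbar^k y^p \in \C[[y_1,\ldots,y_n]][[\hbar]]$, I would produce a preimage by the explicit recipe
\[
\alpha_{m,r} := \sum_{2k+|p|\leq r} a_{p,k}\cdot \frac{1}{m^k}\cdot S_{m,p,r+1},
\]
whenever all the peak sections in the sum exist, and $\alpha_{m,r}:=0$ for the finitely many remaining $m$ (an adjustment which is immaterial by the remark right after Definition \ref{definition: asymptotic_sequence}). For each fixed $r$, this is a finite linear combination of holomorphic sections of $L^{\otimes m}$; condition (2) then holds with error identically zero by construction, so $\Phi([\alpha]) = b$. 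The uniform norm bound in condition (1) reduces via the triangle inequality to the uniform boundedness in $m$ of each $\|S_{m,p,r+1}\|_m$, which is part of the standard peak section asymptotics reviewed in Appendix \ref{section: peak-section}.

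The main obstacle is less computational than conceptual: one must confirm that the construction is insensitive to the auxiliary precision index of the chosen peak section, i.e.\ that replacing $S_{m,p,r+1}$ by $S_{m,p,r+s}$ for any $s \geq 1$ yields an equivalent admissible sequence. This is precisely the phenomenon already highlighted in Example \ref{example: equivalent-asymptotic-sequences}, and it ensures that the equivalence class $[\alpha]$ depends only on $b$, not on the particular peak section precision used to build its representative. With that observation in place, the isomorphism \eqref{equation: asymptotic-sequence-isomorphic-to-Fock-space} follows.
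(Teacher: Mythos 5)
Your proposal is correct and is essentially the paper's own argument: the paper's proof consists solely of exhibiting the map $\{a_{p,k}\}\mapsto \sum_{p,k} a_{p,k}\hbar^k y^p$, which is exactly your $\Phi$. You have simply filled in the routine verifications (well-definedness via the uniqueness of coefficients, injectivity from the definition of $\sim$, and surjectivity via the truncated peak-section representative), all of which are sound.
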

\begin{proof}
The above isomorphism is given by 
$
\{a_{p,k}\}\mapsto \sum_{p,k} a_{p,k}\cdot \hbar^ky^p. 
$
\end{proof}
\begin{rmk}
	The isomorphism in the above lemma depends on a choice of $K$-coordinates centered at $z_0$, and thus is unique only up to a $U(n)$-transformation. 
\end{rmk}

Now for every sequence of operators $\{A_m\}_{m\geq 0}$, where $A_m\in \End(H^0(X,L^{\otimes m}))$, we have an obvious action on $V$: 
$$
\{\alpha_{m,r}\}\mapsto\{A_m(\alpha_{m,r})\}.
$$
We apply this to the sequence of Toeplitz operators $\{T_{f,m}\}_{m\geq 0}$ associated to any given smooth function $f\in C^\infty(X)$.

\begin{lem}\label{lemma: Toeplitz-operators-preserve-asymptotic-sequences}
	Suppose that $\alpha=\{\alpha_{m,r}\}$ is an admissible sequence. Then $\{T_{f,m}(\alpha_{m,r})\}$ is also an admissible sequence for any smooth function $f$. 
\end{lem}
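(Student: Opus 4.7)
I would verify both requirements in the definition of an admissible sequence (Definition \ref{definition: asymptotic_sequence}) for the double sequence $\{T_{f,m}(\alpha_{m,r})\}$. The uniform norm bound is immediate: since $\Pi_m$ is an orthogonal projection and $X$ is compact, $\|T_{f,m}\|_{\mathrm{op}} \leq \|f\|_{L^\infty(X)}$, so $\|T_{f,m}(\alpha_{m,r})\|_m \leq \|f\|_\infty\, C_r$ for each fixed $r$.

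The Bargmann-Fock coefficients $\{b_{p,k}\}$ of $\{T_{f,m}(\alpha_{m,r})\}$ would be produced via the formal Toeplitz operator of Definition \ref{defn:formal-Toeplitz-operator}. In $K$-coordinates around $z_0$ with a local holomorphic frame $e_L$ of $L$, write $|e_L|_h^2 = e^{-\varphi}$ and let $e^\psi$ denote the density of the volume form. Because $J_\varphi = |y|^2 + O(|y|^3)$ in $K$-coordinates, the series $\phi := |y|^2 - J_\varphi + \hbar J_\psi$ lies in $(\mathcal{W}_{\mathbb{C}^n})_3$, so I may set
$$\sum_{p,k} b_{p,k}\, \hbar^k y^p \; := \; T_{\phi,\, J_f}\!\Bigl(\sum_{p,k} a_{p,k}\, \hbar^k y^p\Bigr) \,\in\, \mathcal{F}_{\mathbb{C}^n}.$$
Filtration-preservation of $T_{\phi, J_f}$ ensures each $b_{p,k}$ depends on only finitely many of the $a_{p',k'}$, hence is well-defined.

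To verify the asymptotic condition \eqref{equation: asymptotic-general-Kahler}, I would use that $S_{m,q,r+1}$ is holomorphic, so $\langle T_{f,m}(\alpha_{m,r}), S_{m,q,r+1}\rangle_m = \langle f\alpha_{m,r}, S_{m,q,r+1}\rangle_m$. Setting
$$\alpha_{m,r} \;=\; \sum_{2k+|p|\leq r} a_{p,k}\, m^{-k}\, S_{m,p,r+1} + R_{m,r}$$
as a definition of $R_{m,r}$, admissibility of $\alpha$ gives $\langle R_{m,r}, S_{m,q,r+1}\rangle_m = O(m^{-r-1})$ for all $|q|\leq r$. The principal contribution $\sum_{2k+|p|\leq r} a_{p,k} m^{-k}\langle f\cdot S_{m,p,r+1}, S_{m,q,r+1}\rangle_m$ can then be handled by Theorem \ref{theorem: asymptotics-local} applied with $\hbar = 1/m$: the exponential decay of the peak sections away from $z_0$ localizes each integral, and the Feynman-Laplace expansion produces an asymptotic series which, by construction of $\phi$ and $\{b_{p,k}\}$, agrees with $\sum_{2k+|p|\leq r} b_{p,k} m^{-k}\langle S_{m,p,r+1}, S_{m,q,r+1}\rangle_m$ modulo $O(m^{-r-1})$.

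The main obstacle will be the cross term $\langle f R_{m,r}, S_{m,q,r+1}\rangle_m$, since admissibility controls inner products of $R_{m,r}$ only against the peak sections themselves, not after multiplication by an arbitrary $f$. I expect to resolve this by rewriting
$$\langle f R_{m,r}, S_{m,q,r+1}\rangle_m \;=\; \langle R_{m,r},\, T_{\bar f,\, m}(S_{m,q,r+1})\rangle_m,$$
which is legitimate because $R_{m,r}$ is holomorphic, and then applying the same local asymptotic analysis from Theorem \ref{theorem: asymptotics-local} to expand $T_{\bar f, m}(S_{m,q,r+1})$ as a finite $\mathbb{C}$-linear combination of peak sections $S_{m,q',r+1}$ with $|q'|\leq r$, plus an $L^2$-remainder of size $O(m^{-(r+1)/2})$. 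Admissibility of $R_{m,r}$ against the principal part, combined with Cauchy-Schwarz against the small $L^2$-remainder and the uniform bound $\|R_{m,r}\|_m \leq C'_r$, would then close the required $O(m^{-r-1})$ estimate.
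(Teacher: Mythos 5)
Your overall strategy coincides with the paper's: the operator-norm bound $\|T_{f,m}\|\le\|f\|_\infty$, the replacement of $T_{f,m}(\alpha_{m,r})$ by $f\cdot\alpha_{m,r}$ in pairings against the holomorphic peak sections, the localization to a disk around $z_0$ using the decay estimate \eqref{equation: norm-peak-section-outside-disk}, and the definition of the new coefficients via the formal Toeplitz operator $T_{\phi,J_f}$ with $\phi=|y|^2-J_\varphi+\hbar J_\psi$. The paper, however, never splits $\alpha_{m,r}$ into a principal part plus a remainder $R_{m,r}$; it passes directly to the localized integral and invokes Theorem \ref{theorem: asymptotics-local}. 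You have correctly isolated exactly the point this elides: admissibility controls the pairings $\langle R_{m,r},S_{m,q,r+1}\rangle_m$ only against peak sections, and says nothing a priori about $\langle fR_{m,r},S_{m,q,r+1}\rangle_m$.

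The difficulty is that your proposed treatment of this cross term does not close the estimate. The rewriting $\langle fR_{m,r},S_{m,q,r+1}\rangle_m=\langle R_{m,r},T_{\bar f,m}(S_{m,q,r+1})\rangle_m$ is legitimate, but after expanding $T_{\bar f,m}(S_{m,q,r+1})$ into peak sections plus an $L^2$-remainder $E_m$ with $\|E_m\|_m=O(m^{-(r+1)/2})$, Cauchy--Schwarz against $\|R_{m,r}\|_m\le C'_r$ gives only
$$
\bigl|\langle R_{m,r},E_m\rangle_m\bigr|\;\le\;C'_r\cdot O\bigl(m^{-(r+1)/2}\bigr),
$$
which misses the required $O(m^{-r-1})$ by a factor of $m^{(r+1)/2}$; admissibility provides no smallness of $\|R_{m,r}\|_m$ itself, only of its pairings with low-order peak sections. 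Pushing the expansion of $T_{\bar f,m}(S_{m,q,r+1})$ to higher order does not help: the principal part then contains peak sections $S_{m,q',\cdot}$ with $r<|q'|\le 2r+1$, whose pairings with $R_{m,r}$ are covered neither by admissibility (which only reaches $|q'|\le r$) nor by Cauchy--Schwarz (which gives only $O(m^{-|q'|/2})$). Moreover, the claim that Theorem \ref{theorem: asymptotics-local} yields an expansion of $T_{\bar f,m}(S_{m,q,r+1})$ with an $L^2$-controlled remainder is itself unestablished: that theorem controls inner products against antiholomorphic monomials (weak asymptotics), not $L^2$ norms. To complete the argument you need an additional input, e.g.\ a pointwise/Taylor-coefficient consequence of condition \eqref{equation: asymptotic-general-Kahler} for $R_{m,r}$ near $z_0$, or a genuinely $L^2$ expansion of Toeplitz operators applied to peak sections.
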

\begin{proof}
We consider the sequence $T_f(\alpha):=\{T_{f,m}(\alpha_{m,r})\}_{m,r\geq 0}$. First of all, since the operator norm of $T_{f,m}$ is bounded by $||f||_{\infty}$ for any fixed $r>0$, the sequence $\{T_{f,m}(\alpha_{m,r})\}_{m>0}$ has bounded norm. 

To show that $T_f(\alpha)$ satisfies the second asymptotic property, we split the integral which defines that property into two parts: one inside the disk $\{\rho(z)<1\}$ and the other outside:
\begin{align*}
	& m^n\int_X h^m(T_{f,m}(\alpha_{m,r}),S_{m,p,r+1})\cdot dV_g\\
=	& m^n\int_X h^m(f\cdot\alpha_{m,r},S_{m,p,r+1})\cdot dV_g\\
=	& m^n\int_{X\setminus\{\rho(z)<1\}}h^m(f\cdot\alpha_{m,r},S_{m,p,r+1})\cdot dV_g+m^n\int_{\{\rho(z)<1\}}h^m(f\cdot\alpha_{m,r},S_{m,p,r+1})\cdot dV_g,
\end{align*}
where the first equality follows from the fact that $T_{f,m}(\alpha_{m,r})$ is the orthogonal projection of $f\cdot\alpha_{m,r}$ to the space of holomorphic sections, and that $S_{m,p,r+1}$ are all holomorphic sections.  For the integral outside the disk, we have the following estimate:
\begin{align*}
	 & \Bigg|m^n\int_{X\setminus\{\rho(z)<1\}} h^m(f\cdot\alpha_{m,r}, S_{m,p,r+1})\cdot dV_{g}\Bigg|\\
\leq & m^n\left(\int_{X\setminus\{\rho(z)<1\}} ||f\cdot\alpha_{m,r}||^2_{h^m}\cdot dV_{g}\right)^{1/2}\cdot\left(\int_{X\setminus\{\rho(z)<1\}} ||S_{m,p,r+1}||^2_{h^m}\cdot dV_{g}\right)^{1/2}\\
\leq & ||f||_{\infty}\cdot C_r\cdot O\left(\frac{1}{m^{(2r+2+|p|)/2}}\right)=O\left(\frac{1}{m^{r+1}}\right).
\end{align*}
Here the multi-index $p$ satisfies $|p|\leq r$, and the constant $C_r$ is given by the upper bound of the sequence $\{\alpha_{m,r}\}_{m>0}$. For the second inequality, we have used boundedness of $\{\alpha_{m,r}\}_{m>0}$, and equation \eqref{equation: norm-peak-section-outside-disk}.

Hence it remains to consider the integral inside the disk $\{\rho(z)<1\}$. In this local setting, the computation is the same as the asymptotics of the Gaussian integral on $\mathbb{D}^{2n}$, and also that in the formal Hilbert space. So the statement follows from Theorem \ref{theorem: asymptotics-local}.
\end{proof}

This lemma shows that for any smooth function $f$, the sequence of Toeplitz operators $\{T_{f,m}\}_{m>0}$ gives a well-defined linear operator $T_f: V_{z_0} \to V_{z_0}$, $\alpha \mapsto T_f(\alpha)$.

\begin{lem}\label{lemma: Toeplitz-operators-preserve-equivalence-of-asymototic-sequences}
	Suppose that two admissible sequences are equivalent, i.e., $\alpha\sim\beta$. Then for any smooth function $f$, we have $T_f(\alpha)\sim T_f(\beta)$.
\end{lem}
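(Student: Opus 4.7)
By linearity it suffices to show that if $\gamma\in V_{z_0}$ is an admissible sequence with all coefficients $\{a_{p,k}\}$ zero (i.e.\ $\gamma\sim 0$), then $T_f(\gamma)\sim 0$. By Definition \ref{definition: asymptotic_sequence} this hypothesis means
\[
\langle \gamma_{m,r}, S_{m,q,r+1}\rangle_m = O\bigl(m^{-(r+1)}\bigr)
\]
for every fixed $r\geq 0$ and every $|q|\leq r$, together with $\|\gamma_{m,r}\|_m\leq C_r$. By the previous lemma $T_f(\gamma)$ is itself admissible, with some coefficients $\{b_{p,k}\}$; since those coefficients are uniquely determined by the asymptotic expansion of $\langle T_{f,m}(\gamma_{m,r}), S_{m,q,r+1}\rangle_m$ in powers of $1/m$, it is enough to prove
\[
\langle T_{f,m}(\gamma_{m,r}), S_{m,q,r+1}\rangle_m = O\bigl(m^{-(r+1)}\bigr)
\]
for all $|q|\leq r$. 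This forces $b_{p,k}=0$ whenever $2k+|p|\leq r$, and letting $r\to\infty$ forces all $b_{p,k}=0$, i.e.\ $T_f(\gamma)\sim 0$.

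To establish the estimate, I mimic the splitting used in the proof of the previous lemma. Using $T_{f,m}=\Pi_m\circ m_f$ and holomorphicity of $S_{m,q,r+1}$, the pairing reduces to $\langle f\gamma_{m,r}, S_{m,q,r+1}\rangle_m$, which I split into an integral over $X\setminus\{\rho(z)<1\}$ and over $\{\rho(z)<1\}$. The outside integral is $O(m^{-(r+1)})$ by Cauchy--Schwarz combined with the uniform bound $\|\gamma_{m,r}\|_m\leq C_r$ and the exponential peak-section decay estimate. For the inside integral, working in a local frame of $L$ and $K$-coordinates centered at $z_0$ with local K\"ahler potential $\varphi$ and local volume density $e^\psi$, the integral takes the form
\[
m^n\int_{\mathbb{D}^{2n}} f(z,\bar z)\, g_m(z)\, \overline{s_{m,q}(z)}\, e^{-m\varphi(z,\bar z)+\psi(z,\bar z)}\, dV,
\]
where $g_m$ and $s_{m,q}$ are the local holomorphic representatives of $\gamma_{m,r}$ and $S_{m,q,r+1}$. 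To this we apply Theorem \ref{theorem: asymptotics-local} with $\hbar=1/m$.

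The punchline is the observation that the resulting asymptotic expansion depends \emph{linearly} on the Taylor jet of $g_m$ at $z_0$, and by construction each $s_{m,p}$ has prescribed jet $z^p+O(|z|^{2r+2})$, so the pairings $\langle\gamma_{m,r},S_{m,p,r+1}\rangle_m$ for $|p|\leq r$ are exactly what extract the low-order Taylor coefficients of $g_m$ (with weights $m^{-|p|}$) to the accuracy needed. Since these pairings are all $O(m^{-(r+1)})$ by hypothesis, the Feynman-Laplace expansion yields $O(m^{-(r+1)})$. Unwinding, the output coefficients $\{b_{p,k}\}$ are precisely the image of $\sum_{p,k} a_{p,k}\hbar^k y^p$ under the formal Toeplitz operator $T_{\phi,J_f}$ of Definition \ref{defn:formal-Toeplitz-operator}, so $a_{p,k}\equiv 0$ forces $b_{p,k}\equiv 0$. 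The main obstacle will be making this linear reduction rigorous: the admissibility hypothesis only controls $\gamma_{m,r}$ through pairings with finitely many peak sections, and one must argue that Taylor components of $g_m$ beyond what these pairings see cannot feed back into the expansion at orders $\leq r$ in $1/m$, which relies on the specific truncation structure of peak sections together with the degree-preserving nature of the Feynman-Laplace expansion established in Lemma \ref{lemma: formal-integral-preserves-filtration}.
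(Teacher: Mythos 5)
Your overall skeleton coincides with the paper's: reduce by linearity to an admissible sequence $\gamma\sim 0$, observe that $\gamma\sim 0$ is equivalent to $\langle\gamma_{m,r},S_{m,q,r+1}\rangle_m=O(m^{-(r+1)})$ for all $r$ and $|q|\le r$, replace $T_{f,m}(\gamma_{m,r})$ by $f\cdot\gamma_{m,r}$ inside the pairing using holomorphicity of the peak sections, and conclude once the corresponding pairings of $f\cdot\gamma_{m,r}$ are shown to be $O(m^{-(r+1)})$. Where you diverge is in how that last estimate is obtained, and that is where your proposal has a genuine gap. The paper does not invoke the local Feynman--Laplace analysis for this lemma at all; it disposes of the estimate with a single sup-norm bound,
\begin{equation*}
\Big|m^n\int_X h^m(f\cdot\alpha_{m,r},S_{m,q,r+1})\,dV_g\Big|\;\le\;\|f\|_\infty\cdot\Big|m^n\int_X h^m(\alpha_{m,r},S_{m,q,r+1})\,dV_g\Big|=O\Big(\frac{1}{m^{r+1}}\Big),
\end{equation*}
passing the smallness of the pairings of $\gamma$ directly to those of $f\gamma$. (That displayed inequality is itself terse --- one cannot in general pull $\|f\|_\infty$ outside the modulus of an oscillatory integral --- so your instinct that the point deserves more care is not unreasonable; but your proposal does not supply the missing argument either.)

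The concrete problem with your route is that Theorem \ref{theorem: asymptotics-local} takes as input a \emph{fixed} holomorphic function $s$, so that $J_s$ is a single well-defined element of $\mathcal{F}_{\C^n}$ whose jet drives the Feynman--Laplace expansion. Here $\gamma_{m,r}$ is a sequence of sections varying with $m$, constrained only by the uniform norm bound $\|\gamma_{m,r}\|_m\le C_r$ and by the smallness of finitely many pairings; there is no fixed jet to hand to the theorem, and the theorem as stated simply does not apply. Your closing claim that ``Taylor components of $g_m$ beyond what these pairings see cannot feed back into the expansion at orders $\le r$'' is precisely the statement that needs proof, and as written it is asserted rather than established. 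It is also not automatic: for $|q'|>r$ the norm bound and Cauchy--Schwarz give only $|\langle\gamma_{m,r},S_{m,q',r+1}\rangle_m|=O(m^{-|q'|/2})$, which can be much larger than $O(m^{-(r+1)})$, so the high Taylor components of $\gamma_{m,r}$ are not a priori negligible at the accuracy you need. To complete the lemma you must either justify that feedback estimate, or bypass the local expansion entirely and bound the pairing of $f\gamma_{m,r}$ directly in terms of data you already control, as the paper does.
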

\begin{proof}
We only need to show that if the coefficients $a_{k,I}$ of $\alpha=\{\alpha_{m,r}\}$ vanish, then $T_f(\alpha)$ has the same property. Notice that the condition $a_{k,I}=0$ is equivalent to the following equalities for all indices $r,q$, as $m\rightarrow\infty$:
$$
 m^n\int_X h^m (\alpha_{m,r},S_{m,q,r+1})\cdot dV_g=O\left(\frac{1}{m^{r+1}}\right).
$$
Similar to the argument of Lemma \ref{lemma: Toeplitz-operators-preserve-asymptotic-sequences}, there is
\begin{align*}
\Bigg|m^n\int_X h^m (T_{f,m}(\alpha_{m,r}),S_{m,q,r+1})\cdot dV_g\Bigg|=&\Bigg|m^n\int_X h^m (f\cdot\alpha_{m,r},S_{m,q,r+1})\cdot dV_g\Bigg|\\
\leq&m^n\cdot ||f||_\infty\Bigg|\int_X h^m (\alpha_{m,r},S_{m,q,r+1})\cdot dV_g\Bigg|\\
=&O\left(\frac{1}{m^{r+1}}\right).
\end{align*}
\end{proof}



Hence, for every smooth function $f$ on $X$, the sequence of Toeplitz operators $\{T_{f,m}\}_{m>0}$ gives a well-defined linear operator $T_f$ on the vector space $H_{z_0}$. We can further extend it to an action of $C^\infty(X)[[\hbar]]$ on $H_{z_0}$ by letting $\hbar^k\cdot f$ act as 
$$
T_{\hbar^k\cdot f}:\{\alpha_{m,r}\}\mapsto\left\{\frac{1}{m^k}\cdot T_{f,m}(\alpha_{m,r})\right\}.
$$

Lemma \ref{lemma: formal-toeplitz-adjoint-operator} implies the following:
\begin{prop}\label{prop:self-adjoint}
	Suppose $f\in C^\infty(X)$ is a real function. Then for every $z_0\in X$, the operator $T_f$ on $H_{z_0}$ is self-adjoint.  
\end{prop}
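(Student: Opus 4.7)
The plan is to transport the self-adjointness question from the geometric Hilbert space $H_{z_0}$ to the local formal Hilbert space of Section \ref{section:formal_Hilbert_spaces}, and then invoke Lemma \ref{lemma: formal-toeplitz-adjoint-operator}. The natural $\C[[\hbar]]$-valued inner product on $H_{z_0}$ is defined on representatives by extracting the asymptotic expansion in $1/m$ of $\langle \alpha_{m,r}, \beta_{m,r}\rangle_m$ (for $r$ large relative to the degree of the coefficients one wishes to read off) and replacing $1/m^k$ by $\hbar^k$; by the peak-section asymptotics and condition \eqref{equation: asymptotic-general-Kahler}, this descends to equivalence classes.

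To set up the local model, I would pick K-coordinates $z = (z_1,\ldots,z_n)$ and a local holomorphic frame $e_L$ of $L$ around $z_0$, writing $|e_L|_h^2 = e^{-\varphi(z,\bar z)}$ and $dV_g = (\sqrt{-1})^n e^{\psi(z,\bar z)}\,dz_1 d\bar z_1\cdots dz_n d\bar z_n$. In K-coordinates we have $J_\varphi = |y|^2 + O(|y|^3)$, so $\phi := |y|^2 - J_\varphi + \hbar J_\psi$ lies in $(\W_{\C^n})_3$. Since $\varphi$ and $\psi$ are real-valued smooth functions, $\phi$ is real in the sense of Lemma \ref{lemma: formal-inner-product-is-hermitian}.

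Next, I would use the isomorphism \eqref{equation: asymptotic-sequence-isomorphic-to-Fock-space}, $H_{z_0}\cong\mathcal{F}_{\C^n}$, to establish two compatibilities. First, by splitting the integral into the geodesic ball $\{\rho(z)<1\}$ and its complement exactly as in Lemma \ref{lemma: Toeplitz-operators-preserve-asymptotic-sequences} and then applying Theorem \ref{theorem: asymptotics-local}, the geometric inner product $\langle\cdot,\cdot\rangle_{z_0}$ on $H_{z_0}$ agrees with the formal inner product \eqref{defn:formal-inner-product} determined by this $\phi$. Second, the geometric Toeplitz action $T_f$ on $H_{z_0}$ corresponds, under the same isomorphism, to the formal Toeplitz operator $T_{\phi,J_f}$ of Definition \ref{defn:formal-Toeplitz-operator}: the local asymptotic formula \eqref{equation: formula-local-Toeplitz} identifies the coefficients $a_{k,I}$ of $T_f(\alpha)$ with the Bargmann-Fock expansion of $T_{(e^{\phi/\hbar})^{-1}}\circ T_{J_f\cdot e^{\phi/\hbar}}(J_s)$, which by \eqref{equation: relation-Taylor-expansion-formal-Toeplitz} is exactly $T_{O_{J_f}}=T_{\phi,J_f}$.

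Once these identifications are in place, the conclusion is immediate: since $f$ is real, $\overline{J_f} = J_f$ in $\W_{\C^n}$, and Lemma \ref{lemma: formal-toeplitz-adjoint-operator} applied to the real $\phi$ yields $T_{\phi,J_f}^* = T_{\phi,\overline{J_f}} = T_{\phi,J_f}$, whence $T_f$ is self-adjoint on $H_{z_0}$. The main obstacle in this plan is the compatibility of the Toeplitz action with the formal one on the sub-quotient $H_{z_0}$, because one must control the non-local contribution from outside the geodesic ball uniformly in $m$; this is handled by the exponential decay estimates \eqref{equation: norm-peak-section-outside-disk} for peak sections, together with the uniform bound $\|\alpha_{m,r}\|_m\le C_r$ built into the definition of admissible sequences, so that only the formal-integral model contributes to each order in $\hbar$.
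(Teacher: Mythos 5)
Your proposal is correct and follows the same route as the paper, which simply deduces the proposition from Lemma \ref{lemma: formal-toeplitz-adjoint-operator} after identifying the inner product on $H_{z_0}$ with the formal inner product determined by the (real) $\phi$ and the action of $T_f$ with the formal Toeplitz operator $T_{\phi,J_f}$ (as in Theorem \ref{proposition: explicit-formula-asymptotic-representation}). Your write-up merely makes explicit the reduction steps and the control of the contribution from outside the geodesic ball, which the paper leaves implicit.
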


\begin{lem}\label{lemma: asymptotitc-operator-asymptotic-sequence}
	Let $A=\{A_m\}_{m\geq 0}$ and $B=\{B_m\}_{m\geq 0}$ be two sequences of bounded operators preserving asymptotic sequences and satisfying the condition that 
	\begin{equation}\label{equation: asymptotic-operator}
		||A_m-B_m||=O\left(\frac{1}{m^{k+1}}\right).
	\end{equation}
	Then for any admissible sequence $\alpha$, the two admissible sequences $A(\alpha)$ and $B(\alpha)$ have the same coefficients up to weight $k$. 
\end{lem}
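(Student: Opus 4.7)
The plan is to form the admissible sequence $\gamma := A(\alpha) - B(\alpha)$, whose components are $\gamma_{m,r} = (A_m - B_m)(\alpha_{m,r})$, and show that all of its coefficients $c_{p,j}$ with $2j + |p| \leq k$ vanish. Since the assignment of coefficients to an admissible sequence is $\C$-linear, this vanishing is exactly the statement that the coefficients of $A(\alpha)$ and $B(\alpha)$ agree up to weight $k$.

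First I would control the pairings $\langle \gamma_{m,r}, S_{m,q,r+1}\rangle_m$. Admissibility of $\alpha$ supplies the uniform bound $||\alpha_{m,r}||_m \leq C_r$ for each fixed $r$, so the operator-norm hypothesis \eqref{equation: asymptotic-operator} yields $||\gamma_{m,r}||_m = O(1/m^{k+1})$ for every fixed $r$. Since the peak sections are normalized, Cauchy--Schwarz gives
\begin{equation*}
|\langle \gamma_{m,r}, S_{m,q,r+1}\rangle_m| = O(1/m^{k+1})
\end{equation*}
for all multi-indices $q$ with $|q| \leq r$.

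Next I apply the defining relation \eqref{equation: asymptotic-general-Kahler} of the coefficients $c_{p,j}$ of $\gamma$: for each $r$ and each $|q| \leq r$,
\begin{equation*}
\sum_{2j + |p| \leq r} c_{p,j}\, m^{-j}\, \langle S_{m,p,r+1}, S_{m,q,r+1}\rangle_m
= \langle \gamma_{m,r}, S_{m,q,r+1}\rangle_m + O(1/m^{r+1}).
\end{equation*}
Specializing to $r = k$, the right-hand side becomes $O(1/m^{k+1})$.

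The main technical point, which I expect to be the real obstacle, is to extract each $c_{p,j}$ from the displayed asymptotic identity. For this I would invoke the asymptotic expansion of the Gram matrix $G_{pq}(m) := \langle S_{m,p,k+1}, S_{m,q,k+1}\rangle_m$ furnished by Lemma \ref{lemma: inner-product-peak-section-asymptotic}, which is a perturbation of the identity by successively higher-order terms in $1/m$ (mirroring the formal-model statement of Lemma \ref{lemma: holomorphic-polynomials-orthonormla-basis}). Ordering pairs $(p,j)$ by increasing weight $2j + |p|$ and matching successive powers of $1/m$ on both sides, induction on the weight forces each $c_{p,j}$ with $2j + |p| \leq k$ to vanish: a nonzero coefficient of minimal weight $w_0 = 2j_0 + |p_0|$ would, upon setting $q = p_0$, contribute a diagonal term of order $m^{-j_0}$ that is strictly larger than $O(1/m^{k+1})$ and cannot be cancelled by higher-weight contributions, yielding the required contradiction.
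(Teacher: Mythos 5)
Your proposal is correct and is essentially a fleshed-out version of the paper's own (one-line) argument: the operator-norm bound makes $(A_m-B_m)(\alpha_{m,r})$ of norm $O(1/m^{k+1})$, which via the peak-section Gram-matrix asymptotics forces the coefficients of the difference to vanish up to weight $k$. The only nitpick is that the leading diagonal contribution you isolate is of order $m^{-(j_0+|p_0|)}$ rather than $m^{-j_0}$, but since $j_0+|p_0|\leq 2j_0+|p_0|\leq k$ the contradiction with $O(1/m^{k+1})$ goes through exactly as you say.
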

\begin{proof}
Equation \eqref{equation: asymptotic-operator} implies that the operators $\{A_m-B_m\}$ will increase the weight of $\alpha$ by $k+1$. The lemma follows.  
\end{proof}

Here is our main theorem:
\begin{thm}\label{theorem: representation-Berezin-Toeplitz-on-asymptotic-sequences}
	Let $z_0\in X$ be any point. The action of $C^\infty(X)[[\hbar]]$ on the vector space $H_{z_0}$ satisfies the following relation:
	\begin{equation}\label{equation: representation-Toeplitz-operator}
		T_f\circ T_g=T_{fg}+\sum_{k\geq 1}\hbar^k\cdot T_{C_k(f,g)}, \hspace{3mm}f,g\in C^\infty(X),
	\end{equation}
	where $C^k(-,-)$ are the bi-differential operators which appear in the Berezin-Toeplitz quantization. Therefore, $H_{z_0}$ is a representation of the Berezin-Toeplitz deformation quantization algebra $(C^\infty(X)[[\hbar]],\star_{BT})$. 
\end{thm}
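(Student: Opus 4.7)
The plan is to derive \eqref{equation: representation-Toeplitz-operator} as a direct consequence of the Berezin-Toeplitz asymptotic expansion \eqref{equation: asymptotic-expansion-composition-Toeplitz-operators} combined with Lemma \ref{lemma: asymptotitc-operator-asymptotic-sequence}. The key point is that although the identity $T_f \circ T_g = T_{fg} + \sum_{k\geq 1}\hbar^k T_{C_k(f,g)}$ fails exactly at the level of operators on the individual Hilbert spaces $H^0(X, L^{\otimes m})$, it becomes an exact identity on $H_{z_0}$ because admissible sequences only remember information order-by-order in $1/m$, up to arbitrary finite order.

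Fix a class in $H_{z_0}$ represented by an admissible sequence $\alpha = \{\alpha_{m,r}\}$, and fix an arbitrary integer $N \geq 0$. Consider the two sequences of operators on $H^0(X,L^{\otimes m})$ given by $A_m := T_{f,m}\circ T_{g,m}$ and $B_m := \sum_{i=0}^{N} \frac{1}{m^i} T_{C_i(f,g), m}$. By \eqref{equation: asymptotic-expansion-composition-Toeplitz-operators} we have $\|A_m - B_m\| = O(1/m^{N+1})$. Both $A = \{A_m\}$ and $B = \{B_m\}$ preserve admissible sequences: for $B$, this follows termwise from Lemma \ref{lemma: Toeplitz-operators-preserve-asymptotic-sequences}; for $A$, applying Lemma \ref{lemma: Toeplitz-operators-preserve-asymptotic-sequences} to $T_g$ yields an admissible sequence $\{T_{g,m}(\alpha_{m,r})\}$, and a second application gives that $\{A_m(\alpha_{m,r})\}$ is again admissible. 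Lemma \ref{lemma: asymptotitc-operator-asymptotic-sequence} then tells us that $A(\alpha)$ and $B(\alpha)$ agree as admissible sequences on all coefficients $a_{p,k}$ of weight $2k + |p| \leq N$.

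On the other hand, by the definition of the $\hbar$-rescaled action, the operator $T_{fg} + \sum_{k=1}^{N}\hbar^k T_{C_k(f,g)}$ on $H_{z_0}$ is represented at level $m$ by the very same operator $B_m$. The remaining tail $\sum_{k > N} \hbar^k T_{C_k(f,g)}(\alpha)$ contributes only to coefficients of weight strictly greater than $N$, since each factor of $\hbar$ raises the weight by $2$. Combining these observations, $T_f\circ T_g(\alpha)$ and $\bigl(T_{fg} + \sum_{k\geq 1}\hbar^k T_{C_k(f,g)}\bigr)(\alpha)$ share the same coefficients up to weight $N$. Since $N$ was arbitrary, their difference has vanishing coefficients at every weight, hence the two admissible sequences are equivalent and represent the same class in $H_{z_0}$. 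This yields \eqref{equation: representation-Toeplitz-operator}, and associativity together with $T_1 = \mathrm{id}$ are inherited from the same relation for the $\star_{BT}$-product, so $H_{z_0}$ is indeed a representation.

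The only real subtlety is the bookkeeping between the index $N$ in the operator-norm estimate \eqref{equation: asymptotic-expansion-composition-Toeplitz-operators} (an order in $1/m$) and the weight filtration $2k+|p|$ on $H_{z_0}$; but since a coefficient $a_{p,k}$ of an admissible sequence is extracted by an inner product of size $1/m^k$ against a peak section, each power of $1/m$ on the operator side is faithfully recorded as one unit of weight on the $H_{z_0}$ side, so Lemma \ref{lemma: asymptotitc-operator-asymptotic-sequence} applies verbatim. Beyond this, no new analysis is needed: everything nontrivial has been packaged into Lemmas \ref{lemma: Toeplitz-operators-preserve-asymptotic-sequences}, \ref{lemma: Toeplitz-operators-preserve-equivalence-of-asymototic-sequences}, and \ref{lemma: asymptotitc-operator-asymptotic-sequence}, together with the classical Berezin-Toeplitz estimate \eqref{equation: asymptotic-expansion-composition-Toeplitz-operators}.
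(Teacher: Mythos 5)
Your proposal is correct and follows essentially the same route as the paper's proof: both apply Lemma \ref{lemma: asymptotitc-operator-asymptotic-sequence} to $A_m=T_{f,m}\circ T_{g,m}$ and the truncated sum $B_m=\sum_{i=0}^{N}\frac{1}{m^i}T_{C_i(f,g),m}$ via the estimate \eqref{equation: asymptotic-expansion-composition-Toeplitz-operators}, and then let $N\to\infty$. You simply spell out a few steps the paper leaves implicit (that both operator sequences preserve admissible sequences, and that the tail $\sum_{k>N}\hbar^k T_{C_k(f,g)}$ only affects coefficients of higher weight).
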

\begin{proof}
We first recall the property of Toeplitz operators:
$$
||T_{f,m}\circ T_{g,m}-(T_{fg,m}+\sum_{k=1}^n\frac{1}{m^k}\cdot T_{C_k(f,g),m})||=O\left(\frac{1}{m^{n+1}}\right).
$$
We apply Lemma \ref{lemma: asymptotitc-operator-asymptotic-sequence} by putting $A_m:=T_{f,m}\circ T_{g,m}$ and $B_m:=T_{fg}+\sum_{k=1}^n\frac{1}{m^k}\cdot T_{C_k(f,g),m}$. Then $A(\alpha)$ and $B(\alpha)$ have the same coefficients up to order $n$. The theorem follows by letting $n\rightarrow\infty$.
\end{proof}

\begin{rmk}
The representation and also the isomorphism \eqref{equation: asymptotic-sequence-isomorphic-to-Fock-space} are independent of the choice of the set of peak sections because for every multi-index $p$ and $p'>|p|$, different choices of peak sections only differ by higher order terms. 
\end{rmk}

\subsection{Locality and irreducibility of the representation}\label{subsection: locality-irreducibility-representation}
\

\subsubsection{Locality}
We will give an explicit formula of our representation under the isomorphism \eqref{equation: asymptotic-sequence-isomorphic-to-Fock-space}. Given any $K$-coordinates $(z_1,\cdots, z_n)$ centered at $z_0$, we define $J_{f,z_0}\in\W_{\mathbb{C}^n}$ by
\begin{equation}\label{equation: map-function-to-Wick}
J_{f,z_0}:=\sum_{|I|, |J|\geq 0}\frac{1}{I!J!}\frac{\partial^{|I|+|J|}f}{\partial z^I\bar{z}^J}(z_0)y^I\bar{y}^J,
\end{equation}
where the sum is over all multi-indices.
\begin{thm}\label{proposition: explicit-formula-asymptotic-representation}
	Let $f$ be any smooth function on $X$, and $J_{f,z_0}$ be defined as above. We define $O_{f,z_0}\in\W_{\mathbb{C}^n}$ as the unique solution of the following equation:
	$$
	J_{f,z_0}\cdot e^{\Phi/\hbar}=e^{\Phi/\hbar}\star O_{f,z_0}.
	$$
	Then the action of $T_f$ on $\alpha\in H_{z_0}$ is given by 
	$$
	T_f(\alpha)=O_{f,z_0}\star\alpha.
	$$
	In particular, this implies that the representation $H_{z_0}$ is local in $f\in C^\infty(X)$, i.e., it only depends on the infinite jets of $f$ at $z_0$. 
\end{thm}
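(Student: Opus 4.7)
The plan is to reduce the statement to the local formal computation of Section \ref{section:perturbation-Segal-Bargmann} by working in $K$-coordinates around $z_{0}$, and then to invoke Theorem \ref{theorem: Toeplitz-expression} to identify the resulting formal Toeplitz operator with the Bargmann--Fock action of $O_{f,z_{0}}$. Under the isomorphism \eqref{equation: asymptotic-sequence-isomorphic-to-Fock-space}, an admissible sequence $\alpha$ with coefficients $\{a_{p,k}\}$ corresponds to $\sum_{p,k}a_{p,k}\hbar^{k}y^{p}\in\mathcal{F}_{\mathbb{C}^{n}}$, and I want to show that the coefficients $\{b_{q,\ell}\}$ of $T_{f}(\alpha)$ are those of $O_{f,z_{0}}\star\bigl(\sum a_{p,k}\hbar^{k}y^{p}\bigr)$ under the Bargmann--Fock action.

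First I would fix $K$-coordinates $(z_{1},\ldots,z_{n})$ centered at $z_{0}$ and a local holomorphic frame $e_{L}$ of $L$, so that the Hermitian metric reads $h(e_{L},e_{L})=e^{-\varphi(z,\bar z)}$ with $\varphi$ attaining its unique minimum at the origin and $J_{\varphi}=|y|^{2}+\Phi$, where $\Phi\in(\mathcal{W}_{\mathbb{C}^{n}})_{3}$ contains no purely (anti-)holomorphic terms (a standard property of $K$-coordinates). Write $dV_{g}=e^{\psi(z,\bar z)}(\sqrt{-1})^{n}dz_{1}d\bar z_{1}\cdots dz_{n}d\bar z_{n}$ locally. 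The coefficients $\{b_{q,\ell}\}$ of $T_{f}(\alpha)$ are determined by the asymptotic equations
\begin{equation*}
\Bigl\langle T_{f,m}(\alpha_{m,r})-\sum_{2\ell+|q|\leq r}b_{q,\ell}\tfrac{1}{m^{\ell}}S_{m,q,r+1},\,S_{m,p,r+1}\Bigr\rangle_{m}=O\bigl(\tfrac{1}{m^{r+1}}\bigr)
\end{equation*}
for all $|p|\leq r$. Because $S_{m,p,r+1}$ is holomorphic and $T_{f,m}$ is the orthogonal projection of multiplication by $f$, the left-hand inner product equals $\langle f\cdot\alpha_{m,r},S_{m,p,r+1}\rangle_{m}$.

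Next I would localize the integral. Splitting $X=\{\rho<1\}\cup(X\setminus\{\rho<1\})$ and using the exponential decay estimate for peak sections outside the disk (as in the proof of Lemma \ref{lemma: Toeplitz-operators-preserve-asymptotic-sequences}), the complement contributes $O(1/m^{r+1})$. On the disk I would expand $\alpha_{m,r}$ as the corresponding truncation of a sum of peak sections with weights $\{a_{p,k}/m^{k}\}_{2k+|p|\leq r}$; up to a further $O(1/m^{r+1})$ error (using Lemma \ref{lemma: inner-product-peak-section-asymptotic} and the decay estimates), this replaces $\alpha_{m,r}$ by a polynomial expression in $z$ multiplied by $e_{L}^{\otimes m}$. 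The inner product on the disk then becomes an integral of the form
\begin{equation*}
m^{n}\int_{\{\rho<1\}}\Bigl(f\cdot\sum_{2k+|p|\leq r}\tfrac{a_{p,k}}{m^{k}}z^{p}\Bigr)\cdot\bar z^{p'}\,e^{-m\varphi(z,\bar z)}e^{\psi(z,\bar z)}(\sqrt{-1})^{n}dz\,d\bar z,
\end{equation*}
times an irrelevant normalizing constant. Setting $\hbar=1/m$, this is precisely the integral treated by Theorem \ref{theorem: asymptotics-local}, whose asymptotic expansion depends only on the infinite jets $J_{f,z_{0}}$, $J_{\varphi,z_{0}}=|y|^{2}+\Phi$, and $J_{\psi,z_{0}}$ at $z_{0}$.

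Finally I would apply the algebraic identity of Theorem \ref{theorem: Toeplitz-expression}: the Taylor data yields exactly the formal integral
\begin{equation*}
\tfrac{1}{\hbar^{n}}\int\bigl(J_{f,z_{0}}\cdot a(y)\bigr)\bar y^{p'}\,e^{\frac{-|y|^{2}+\Phi+\hbar\,J_{\psi,z_{0}}}{\hbar}}
\end{equation*}
where $a(y)=\sum a_{p,k}\hbar^{k}y^{p}$, and the unique $O_{f,z_{0}}\in\mathcal{W}_{\mathbb{C}^{n}}$ characterized by $J_{f,z_{0}}\cdot e^{\Phi/\hbar}=e^{\Phi/\hbar}\star O_{f,z_{0}}$ satisfies
\begin{equation*}
\tfrac{1}{\hbar^{n}}\int\bigl(J_{f,z_{0}}\cdot a(y)\bigr)\bar y^{p'}\,e^{\frac{-|y|^{2}+\Phi+\hbar\,J_{\psi,z_{0}}}{\hbar}}
=\tfrac{1}{\hbar^{n}}\int\bigl(O_{f,z_{0}}\star a(y)\bigr)\bar y^{p'}\,e^{\frac{-|y|^{2}+\Phi+\hbar\,J_{\psi,z_{0}}}{\hbar}}
\end{equation*}
(absorbing $\hbar\,J_{\psi,z_{0}}$ into the perturbation $\phi$ of Theorem \ref{theorem: Toeplitz-expression} does not affect the argument, since that theorem holds for any $\phi\in(\mathcal{W}_{\mathbb{C}^{n}})_{3}$ without purely holomorphic terms). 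Matching against the defining equation for $\{b_{q,\ell}\}$ and using Lemma \ref{lemma: holomorphic-polynomials-orthonormla-basis} (which guarantees that the $y^{q}$ are an orthonormal basis modulo $\hbar$) gives $\sum b_{q,\ell}\hbar^{\ell}y^{q}=O_{f,z_{0}}\star a(y)$, proving the formula. The locality statement is then immediate: $O_{f,z_{0}}$ depends only on $J_{f,z_{0}}$ and on the Kähler data at $z_{0}$.

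The main technical obstacle is the bookkeeping of error terms at step two: I need to control, uniformly in $r$, the replacement of $\alpha_{m,r}$ by a peak-section polynomial on the disk and simultaneously the truncation of the Feynman expansion at order $\hbar^{r+1}$, so that the leading $r$ terms in $\hbar$ of the resulting formal integral agree with those dictated by Theorem \ref{theorem: Toeplitz-expression}. Once this is done, the remainder of the argument is algebra.
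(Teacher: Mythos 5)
Your proposal is correct and follows essentially the same route as the paper, whose own proof is just a two-sentence reduction: localize near $z_{0}$ (which you justify via the peak-section decay estimates) and then invoke the formal Toeplitz computation of Theorem \ref{theorem: Toeplitz-expression} through Theorem \ref{theorem: asymptotics-local}. You in fact supply more detail than the paper does, including the error bookkeeping that the paper leaves implicit.
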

\begin{proof}
The representation $H_{z_0}$ depends on an arbitrarily small neighborhood of $z_0$. So the result follows from the computation of the formal Toeplitz operators on $\mathbb{C}^n$ in Theorem \ref{theorem: Toeplitz-expression}.  
\end{proof}
As a straightforward corollary, we have:
\begin{cor}\label{corollary: Toeplitz-star-product-formula-via-asymptotic-representation}
	Let $f,g\in C^\infty(X)$ be smooth functions on $X$. Then
	$$
	O_{f\star_{BT}g,z_0}=O_{f,z_0}\star O_{g,z_0}. 
	$$
\end{cor}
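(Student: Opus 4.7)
The plan is to combine Theorem~\ref{theorem: representation-Berezin-Toeplitz-on-asymptotic-sequences} with the explicit formula in Theorem~\ref{proposition: explicit-formula-asymptotic-representation}. Since $H_{z_0}$ is a representation of $(C^\infty(X)[[\hbar]],\star_{BT})$, the operators $T_{f\star_{BT}g}$ and $T_f\circ T_g$ agree on $H_{z_0}$; on the other hand, Theorem~\ref{proposition: explicit-formula-asymptotic-representation} rewrites each such Toeplitz operator as a Bargmann-Fock action by an element of $\W_{\mathbb{C}^n}$. Comparing the two resulting expressions yields the claimed equality.

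More concretely, I would first pick any admissible sequence $\alpha\in H_{z_0}$, identified with an element of $\mathcal{F}_{\mathbb{C}^n}$ under the isomorphism \eqref{equation: asymptotic-sequence-isomorphic-to-Fock-space}. Theorem~\ref{theorem: representation-Berezin-Toeplitz-on-asymptotic-sequences} gives
\begin{equation*}
T_{f\star_{BT} g}(\alpha)=(T_f\circ T_g)(\alpha).
\end{equation*}
Applying Theorem~\ref{proposition: explicit-formula-asymptotic-representation} to the left-hand side yields $O_{f\star_{BT}g,z_0}\star\alpha$, while the right-hand side becomes
\begin{equation*}
T_f\bigl(O_{g,z_0}\star\alpha\bigr)=O_{f,z_0}\star\bigl(O_{g,z_0}\star\alpha\bigr)=\bigl(O_{f,z_0}\star O_{g,z_0}\bigr)\star\alpha,
\end{equation*}
where in the last step I use that the Bargmann-Fock action makes $\mathcal{F}_{\mathbb{C}^n}$ into a module over $(\W_{\mathbb{C}^n},\star)$.

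Thus $\bigl(O_{f\star_{BT}g,z_0}-O_{f,z_0}\star O_{g,z_0}\bigr)\star\alpha=0$ for every $\alpha\in\mathcal{F}_{\mathbb{C}^n}$. The final step, and the only non-trivial point, is to deduce the equality of the elements themselves from the equality of their Bargmann-Fock actions. This follows because the Bargmann-Fock representation $\W_{\mathbb{C}^n}\to\End(\mathcal{F}_{\mathbb{C}^n})$ is faithful: via Wick normal ordering \eqref{equation: representation-Wick-on-Fock}, an element $U=\sum_{k,I,J}\hbar^k a_{k,I,J}\,y^I\bar{y}^J$ acts as the differential operator $\sum_{k,I,J}\hbar^{k+|J|}a_{k,I,J}\,y^I\partial_y^J$, and the coefficients can be recovered by applying $U\circledast -$ successively to the monomials $y^K$ for all multi-indices $K$ and comparing Taylor coefficients. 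Applying this to the difference $O_{f\star_{BT}g,z_0}-O_{f,z_0}\star O_{g,z_0}$ gives the corollary. Since each step is immediate once Theorems \ref{theorem: representation-Berezin-Toeplitz-on-asymptotic-sequences} and \ref{proposition: explicit-formula-asymptotic-representation} are in hand, there is no real obstacle here; the corollary is essentially a translation of the representation property through the explicit formula.
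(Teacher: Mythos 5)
Your argument is correct and is exactly the route the paper intends: the corollary is stated there as an immediate consequence of Theorem \ref{theorem: representation-Berezin-Toeplitz-on-asymptotic-sequences} combined with the formula $T_h(\alpha)=O_{h,z_0}\star\alpha$ of Theorem \ref{proposition: explicit-formula-asymptotic-representation}, with the faithfulness of the Bargmann--Fock action (already implicit in the uniqueness clause of Theorem \ref{theorem: Toeplitz-expression}) supplying the final step. Your only slip is cosmetic: by \eqref{equation: representation-Wick-on-Fock} the monomial $y^I\bar y^J$ acts as $(\hbar\partial_y)^J\circ m_{y^I}$ rather than $m_{y^I}\circ(\hbar\partial_y)^J$, but this reordering does not affect the faithfulness argument.
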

This gives an algorithm for computing $f\star_{BT}g$: for every $z_0\in X$, in order to find $(f\star_{BT}g)(z_0)$, we only need to compute the Wick product $O_{f,z_0}\star O_{g,z_0}$ and then collect all the constant terms in the Wick algebra.

\subsubsection{Irreducibility}
We now consider irreducibility of our representation. The first observation is that the Bargmann-Fock space is {\em not} an irreducible representation of $\W_{\mathbb{C}^n}$: for every $f\in\W_{\mathbb{C}^n}$, we have the invariant subspaces
$$
T_{f}\left((\mathcal{F}_{\mathbb{C}^n})_k\right)\subset(\mathcal{F}_{\mathbb{C}^n})_k.
$$
But this is the {\em only} reason why the representation fails to be irreducible.
So to have a suitable notion of irreducibility, we  make use of the extended algebra $\W_{\mathbb{C}^n}^+$, which allows terms with negative degrees, and the corresponding extension $\mathcal{F}_{\mathbb{C}^n}^+$. It is then quite easy to check that we indeed obtain an irreducible representation. 

We now define an extension of $C^\infty(X)[[\hbar]]$, which is the geometric analogue of $\W_{\mathbb{C}^n}^+$.
\begin{defn}
	For every smooth function $f\in C^\infty(X)$, let $\deg_{z_0}(f)$ be the vanishing order of $f$ at $z_0$. Then let $(C^\infty(X)[[\hbar]])_{z_0}^+$ be the extension of $C^\infty(X)[[\hbar]]$ which consists of formal functions:
	$$
	\sum_{i\in\mathbb{Z}}\hbar^{i}\cdot f_i,
	$$
	where $f_i\in C^\infty(X)$ are smooth functions on $X$ satisfying the conditions that
	\begin{itemize}
		\item	the sum $\deg_{z_0}(f_i)+2i$ has a uniform lower bound for all $i$, and
		\item	for every degree $k$, the following expression is a finite sum:
				$$
					\sum_{2i+\deg_{z_0}(f_i)=k}\hbar^i\cdot f_i.
				$$
	\end{itemize}
\end{defn}
In the same way we can define the extension $H_{z_0}\subset H_{z_0}^+$. It is easy to check that the extension $(C^\infty(X)[[\hbar]])_{z_0}^+$ is closed under the star product $\star_{BT}$, and acts on $H_{z_0}^+$. Furthermore, the map $f\mapsto O_{f,z_0}$ can be extended to 
\begin{equation}\label{equation: map-extended-formal-function-extended-Wick}
(C^\infty(X)[[\hbar]])_{z_0}^+\rightarrow \W_{\mathbb{C}^n}^+.
\end{equation}

\begin{thm}\label{proposition: weak-irreducibility}
	For every $z_0\in X$, the representation $H_{z_0}^+$ of $(C^\infty(X)[[\hbar]])^+_{z_0}$ is irreducible. 
\end{thm}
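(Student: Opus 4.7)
The plan is to reduce the assertion to a concrete fact about the extended Bargmann--Fock representation of $\W_{\C^n}^+$ on $\mathcal{F}_{\C^n}^+$. By Theorem \ref{proposition: explicit-formula-asymptotic-representation} (and its extension via \eqref{equation: map-extended-formal-function-extended-Wick}), the $(C^\infty(X)[[\hbar]])^+_{z_0}$-action on $H_{z_0}^+$ coincides, after a choice of $K$-coordinate identification $H_{z_0}^+ \cong \mathcal{F}_{\C^n}^+$, with the action sending $\alpha \mapsto O_{f,z_0} \star \alpha$. The assignment $J_f \mapsto O_{f,z_0}$ is a bijection of $\W_{\C^n}^+$ because Lemma \ref{lem:classical-exp-equals-quantum-exp} renders $e^{\Phi/\hbar}$ invertible under both the classical and Wick products in $\W_{\C^n}^+$; combined with Borel's theorem applied termwise in $\hbar$, the composite $(C^\infty(X)[[\hbar]])^+_{z_0} \to \W_{\C^n}^+$, $f \mapsto O_{f,z_0}$, is surjective. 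Hence it suffices to prove that $\mathcal{F}_{\C^n}^+$ is irreducible as a $\W_{\C^n}^+$-module.

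Let $W \subseteq \mathcal{F}_{\C^n}^+$ be a nonzero invariant subspace and pick $0 \neq v \in W$. Let $d$ be the minimal total degree appearing in $v$, and write the degree-$d$ component as the finite sum $v_d = \sum_{2k+|I|=d} a_{k,I}\, \hbar^k y^I$. Choose $I_0$ \emph{maximal}, in the componentwise partial order, among the multi-indices $I$ actually appearing in $v_d$, and set $k_0 := (d-|I_0|)/2$, $c_0 := a_{k_0,I_0} \neq 0$. The action of the antiholomorphic monomial $\bar y^{I_0}$, which is $\hbar^{|I_0|}$ times the differential operator $\partial_y^{I_0}$, annihilates every other summand of $v_d$ by maximality of $I_0$, leaving the pure scalar $c_0\, I_0!\, \hbar^{k_0+|I_0|}$ as the lowest-degree part of $\bar y^{I_0} \circledast v$. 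Rescaling by $(c_0 I_0!)^{-1}\hbar^{-(k_0+|I_0|)} \in \W_{\C^n}^+$ then produces an element $w = 1 + r \in W$ where $r$ consists entirely of terms of strictly positive total degree.

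Since $r$ has all terms of degree $\geq 1$, the geometric series $A := \sum_{k\geq 0}(-r)^k$ converges in $\mathcal{F}_{\C^n}^+$ (each total degree receives only finitely many contributions), and, viewed as a purely holomorphic element of $\W_{\C^n}^+$, its extended Bargmann--Fock action on $w$ reduces to ordinary multiplication, so $A \circledast w = 1 \in W$. For any target $v' \in \mathcal{F}_{\C^n}^+$, the same holomorphic viewpoint gives $v' \circledast 1 = v'$, hence $v' \in W$, so $W = \mathcal{F}_{\C^n}^+$ as required.

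The two subtle points are the following. First, one must select $I_0$ \emph{maximal} (rather than, say, lexicographically smallest) so that $\partial_y^{I_0}$ kills every other term of $v_d$; otherwise the leading contribution to $\bar y^{I_0} \circledast v$ could suffer cancellation and vanish. Second, the enlargement from $\W_{\C^n}$ to $\W_{\C^n}^+$ is indispensable both for the rescaling by $\hbar^{-(k_0+|I_0|)}$ and for the formal inversion $(1+r)^{-1}$, since $r$ itself may involve negative powers of $\hbar$; this is exactly the reason why the unextended Wick action fails to be irreducible, as the paper has already observed that it preserves the degree filtration.
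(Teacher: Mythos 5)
Your argument is correct, and its core mechanism is the same as the paper's: extract the minimal-degree component of a nonzero vector, apply an antiholomorphic monomial $\bar y^{I_0}$ chosen so that $\partial_y^{I_0}$ kills all competing terms (your componentwise-maximal $I_0$ and the paper's ``least $i_0$'' both achieve this), land on a nonzero scalar multiple of a power of $\hbar$ plus higher-degree garbage, and then sweep up with holomorphic creators. The packaging differs in two ways. First, you reduce at the outset to irreducibility of $\mathcal{F}_{\C^n}^+$ over $\W_{\C^n}^+$ by asserting surjectivity of $f\mapsto O_{f,z_0}$; this is correct (bijectivity of $J\mapsto O$ follows from the explicit inverse $O\mapsto (e^{\Phi/\hbar}\star O)\cdot e^{-\Phi/\hbar}$, and Borel's theorem handles $f\mapsto J_f$), but it is a step the paper never makes explicit -- the paper instead picks concrete functions such as $\bar z^{I_0}$ near $z_0$ and absorbs the discrepancy $O_{f_0}-J_{f_0}$ into its correction scheme. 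Second, your cleanup is a one-shot inversion in the module: having produced $w=1+r\in W$ with $r$ purely holomorphic of positive degree, you multiply by the geometric series $(1+r)^{-1}$, whereas the paper iteratively corrects the \emph{operator}, building $f=\sum_i f_i$ degree by degree so that $T_f(a)=\hbar^l$ exactly. Your version is arguably cleaner and isolates exactly where the extension to $\W_{\C^n}^+$ is used (the rescaling by $\hbar^{-(k_0+|I_0|)}$ and the inversion of $1+r$), at the cost of front-loading the surjectivity claim. Both routes are valid; no gap.
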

\begin{proof}
Let $W$ be a sub-representation of $H_{z_0}^+$. We choose any non-zero $a\in W$, which can be written as:
$$
a=\sum_{2i+|I|\geq k}a_{i,I}\hbar^i\cdot y^I.
$$
Since for a local holomorphic function $f$, we have
$$
O_{f,z_0}=J_{f,z_0},
$$
which consists of only creators in $(H_{z_0})^+$, we only need to find $f\in(C^\infty(X)[[\hbar]])^+_{z_0}$ such that $T_{f}(a)=\hbar^l$ for some $l$, and the result will follow.  

We choose a non-zero term in $a$ of leading order $a_{i_0, I_0}\hbar^{i_0}\cdot z^{I_0}, 2i_0+I_0=k$, such that $i_0$ is the least possible. Let $f_0\in (C^\infty(X)[[\hbar]])^+_{z_0}$ be a formal function which is $a_{i_0,I_0}^{-1}\bar{z}^{I_0}$ near $z_0$. So the leading term of the image of $f_0$ under the map \eqref{equation: map-extended-formal-function-extended-Wick} is $a_{i_0, I_0}^{-1}\bar{y}^{I_0}$, and the degree of the function $f_0$ is exactly $|I_0|$.   We have
\begin{align*}
T_{f_0}\bigg(\sum_{2i+|I|\geq k}a_{i,I}\hbar^i\cdot y^I\bigg)= \hbar^{i_0+|I_0|}+\sum_{2i+|I|=2(i_0+|I_0|)+1}b_{i,I}\hbar^i y^{I}
+\text{higher degree terms}.
\end{align*}
The next step is to find a formal function $f_1\in(C^\infty(X)[[\hbar]])^+_{z_0}$, so that 
$$
(T_{f_0}+T_{f_1})\bigg(\sum_{2i+|I|\geq k}a_{i,I}\hbar^i\cdot y^I\bigg)=\hbar^{i_0+|I_0|}+\text{higher degree terms}.
$$
Let $g$ be a formal function which equals $-\frac{1}{\hbar^{i_0+|I_0|}}\sum_{2j+|J|=2(i_0+|I_0|)+1}b_{j,J}\hbar^j z^{J}$ near $z_0$. It is easy to see that the total degree of $g$ is 1, and we have
\begin{align*}
(T_{f_0}+T_{g}\circ T_{f_0})\bigg(\sum_{2i+|I|\geq k}a_{i,I}\hbar^i\cdot y^I\bigg)
=\hbar^{i_0+|I_0|}+\left(\text{terms of degree }\geq 2(i_0+|I_0|)+2)\right).
\end{align*}
Although $T_{g}\circ T_{f_0}=\sum_{i\geq 0}\hbar^{i}T_{C_i(g,f_0)}$ is an infinite sum, but those high enough $\hbar$ will map terms in $a$ to terms of high degree.  More precisely, we can simply let $f_1=\sum_{i=0}^N\hbar^iC_i(g,f_0)$, where $N=i_0+|I_0|-k/2+2$. Then we have
$$
(T_{f_0}+T_{f_1})\bigg(\sum_{2i+|I|\geq k}a_{i,I}\hbar^i\cdot y^I\bigg)
=\hbar^{i_0+|I_0|}+\left(\text{terms of degree }\geq 2(i_0+|I_0|)+2)\right).
$$
From the formula of $\star_{BT}$, it is easy to see that the total degree of $\hbar^iC_i(g,f_0)$ is no less than the sum of the degree of $g_0$ and $f$ which equals $1+|I_0|$. This implies that the degree of terms in $f_1$ is strictly greater than the degree of $f_0$. This procedure can be repeated and we obtain the desired formal function $f=\sum_{i\geq 0}f_{i}\in(C^\infty(X)[[\hbar]])_{z_0}^+$. 
\end{proof}

\appendix

\section{Peak sections in K\"ahler geometry}\label{section: peak-section}

In this appendix, we briefly review of the notion of peak sections, which was introduced in \cite{Tian} and plays an important role in the study of asymptotic expansions of the Bergmann kernel \cite{Zelditch} with applications to balanced embeddings and constant scalar curvature metrics as well as in the theory of the geometric quantization. For our purpose, we need to introduce a normalized version of peak sections and describe their basic properties. We first recall the notion of $K$-coordinates and $K$-frame of the prequantum line bundle $L$. 
\begin{defn}
	Let $e_{L,z_0}$ be a holomorphic frame of the prequantum line bundle $L$ in a neighborhood of a point $z_0\in X$, and let $(z_1,\cdots, z_n)$ be a holomorphic coordinate system centered at $z_0$. Let $\varphi(z):=-\log||e_{L,z_0}||$. We say that $(z_1,\cdots, z_n)$ are {\em $K$-coordinates with $K$-frame} $e_{L,z_0}$ if the Taylor expansion of $\varphi(z)$ at $z_0$ is of the following form:
 	\begin{equation}\label{equation: Taylor-expansion-hermitian-metric}
 		\varphi(z)\sim |z|^2+\sum a_{JK}z^J\bar{z}^K,\hspace{5mm} |J|\geq 2, |K|\geq 2.
 	\end{equation}
\end{defn}
For K\"ahler manifolds with real analytic K\"ahler form, the existence of $K$-coordinates and $K$-frames was shown by Bochner. For K\"ahler manifolds with only smooth K\"ahler form, such a coordinate system and frame do not exist in general; then we may consider a weaker $K$-coordinates and $K$-frames of finite order. But to avoid further technical complications, let us assume that the K\"ahler manifolds in this paper always admit $K$-coordinates and $K$-frames. 

It is obvious that this local holomorphic frame $e_{L,z_0}$ is unique up to a multiplication by a complex number of modulus $1$. In particular, the leading term of the Taylor expansion of $\varphi$ with degree at least $3$ is given by the curvature: 
\begin{equation}\label{equation: logarithm-hermitian-inner-product-modified}
	\varphi(z,\bar{z})=|z|^2+\sum_{i,j,k,l} R_{i\bar{j}k\bar{l}}z_iz_k\bar{z}_j\bar{z}_l+O(|z|^5).
\end{equation}
Also note that equation \eqref{equation: Taylor-expansion-varphi} is satisfied.


\begin{lem}\label{lemma: local-volume-form-purely-holomorphic-derivatives-vanish}
	Suppose the volume form is $(\sqrt{-1})^{n}\cdot e^{\psi(z,\bar{z})}\cdot dz^1\cdots dz^n d\bar{z}^1\cdots d\bar{z}^n=\omega^n$. Then the purely (anti-)holomorphic derivatives of $\psi(z,\bar{z})$ vanish at $z_0$ under the $K$-coordinates, i.e.,
	$$
	\frac{\partial^{|I|}\psi}{\partial z^I}(z_0)=\frac{\partial^{|J|}\psi}{\partial \bar{z}^I}(z_0)=0
	$$
	for all multi-indices with $|I|,|J|>0$. 
\end{lem}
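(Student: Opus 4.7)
The plan is to compute $\psi$ explicitly in terms of the Kähler potential $\varphi$ and exploit the mixed structure of its Taylor expansion in $K$-coordinates. Since $L$ is a prequantum line bundle with $\varphi = -\log\|e_{L,z_0}\|$, the Kähler form $\omega$ is (a constant multiple of) $\sqrt{-1}\,\partial\bar{\partial}\varphi$. Expanding $\omega^n$ out as a wedge power therefore gives
\begin{equation*}
\omega^n = C\cdot (\sqrt{-1})^n \det\!\left(\frac{\partial^2\varphi}{\partial z^i\partial\bar{z}^j}\right) dz^1\cdots dz^n\, d\bar{z}^1\cdots d\bar{z}^n
\end{equation*}
for some nonzero constant $C$, so that $\psi = \log C + \log\det\bigl(\partial_i\bar{\partial}_j\varphi\bigr)$.

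Next I would unpack the Hessian matrix using the $K$-coordinate expansion \eqref{equation: Taylor-expansion-hermitian-metric}, namely $\varphi(z,\bar z) = |z|^2 + \sum_{|J|,|K|\geq 2} a_{JK}z^J\bar z^K$. Differentiating once in $z_i$ and once in $\bar z_j$, each monomial $z^J\bar z^K$ becomes (up to constants) $z^{J'}\bar z^{K'}$ with $|J'|\geq 1$ and $|K'|\geq 1$. Thus
\begin{equation*}
\frac{\partial^2\varphi}{\partial z^i\partial\bar{z}^j}(z,\bar z) = \delta_{ij} + M_{ij}(z,\bar z),
\end{equation*}
where every monomial in the Taylor expansion of $M_{ij}$ contains at least one holomorphic and at least one anti-holomorphic factor. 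I will call a formal power series with this property \emph{mixed}: the crucial stability observation is that the set of mixed series is closed under addition and multiplication (products of monomials each containing at least one $z$ and one $\bar z$ still contain at least one of each).

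Consequently, when I expand $\det(\delta_{ij}+M_{ij})$ via the Leibniz formula, the identity permutation contributes $1$ and every other contribution is a polynomial in the mixed $M_{ij}$'s, hence itself mixed. Therefore $\det(\partial_i\bar{\partial}_j\varphi) = 1 + G(z,\bar z)$ with $G$ mixed, and the formal logarithm $\log(1+G) = \sum_{k\geq 1}\tfrac{(-1)^{k+1}}{k}G^k$ is again mixed by the same closure property. This yields that $\psi = \log C + \log(1+G)$ has \emph{no} nonzero purely holomorphic or purely anti-holomorphic monomials in its Taylor expansion at $z_0$, which is exactly the vanishing statement $\partial^{|I|}\psi/\partial z^I(z_0)=\partial^{|J|}\psi/\partial\bar z^J(z_0)=0$ for $|I|,|J|>0$.

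No step here should present a real obstacle; the only point requiring a little care is the bookkeeping in the second paragraph, to verify that differentiating $z^J\bar z^K$ with $|J|,|K|\geq 2$ once in each variable indeed leaves at least one factor of $z$ and one of $\bar z$, and to check the multiplicative closure of the mixed condition that powers the determinant and logarithm expansions.
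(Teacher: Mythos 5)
Your proof is correct and follows essentially the same route as the paper's: both reduce to $\psi=\mathrm{const}+\log\det\bigl(\partial_i\bar{\partial}_j\varphi\bigr)$ and exploit the fact that the $K$-coordinate expansion \eqref{equation: Taylor-expansion-hermitian-metric} makes every non-constant monomial of the Hessian entries contain both a $z$ and a $\bar z$ factor. Your explicit ``mixed series are closed under products'' bookkeeping just fills in the determinant-and-logarithm step that the paper dispatches with ``from which the statement follows easily.''
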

\begin{proof}
We will only prove the vanishing of purely holomorphic derivatives at $z_0$; the proof for antiholomorphic ones is the same. It suffices to show that the statement is valid for functions $\omega_{i_1\bar{j_1}}\cdots\omega_{i_n\bar{j_n}}$, where
$$
\omega_{i\bar{j}}=\frac{\partial^2\varphi}{\partial z^i\partial \bar{z}^j},
$$
and $\phi$ is K\"ahler potential. But then equation \eqref{equation: Taylor-expansion-hermitian-metric} implies that
$$
\frac{\partial^{|I|+1}\varphi}{\partial z^I\partial\bar{z}^j}(z_0)=0,\ |I|\geq 2,
$$
from which the statement follows easily.
\end{proof}
Equation \eqref{equation: Taylor-expansion-hermitian-metric} together with Lemma \ref{lemma: local-volume-form-purely-holomorphic-derivatives-vanish} tell us that the Taylor expansions of $\varphi(z,\bar{z})$ and $\psi(z,\bar{z})$ satisfy the technical conditions in Theorem \ref{theorem: asymptotics-local}. This allows us to apply the algebraic computations in the formal setting in Section \ref{section:formal_Hilbert_spaces}. 

We now recall the following proposition in Tian's paper \cite{Tian} and, in particular, the definition of peak sections. Let $(z_1,\cdots, z_n)$ be a $K$-coordinate with $K$-frame $e_{L,z_0}$ at $z_0\in X$, and consider the function $\rho(z) := \sqrt{|z_1|^2+\cdots+|z_n|^2}$.
\begin{prop}[Lemma 1.2 in \cite{Tian}]\label{proposition: peak-section}
	For an $n$-tuple of integers $p=(p_1,\cdots,p_n)\in\mathbb{Z}_+^n$ and an integer $r>|p|=p_1+\cdots+p_n$, there exists an $m_0>0$ such that, for $m>m_0$, there is a holomorphic global section $S$, called a {\em peak section}, of the line bundle $L^{\otimes m}$, satisfying 
	\begin{equation}\label{equation: property-peak-section-norm}
 		\int_X ||S||^2_{h^m}dV_g=1,\hspace{5mm}\int_{X\setminus\{\rho(z)\leq\frac{\log m}{\sqrt{m}}\}}||S||_{h^m}^2dV_g=O\left(\frac{1}{m^{2r}}\right),
	\end{equation}
	and locally at $z_0$, 
	\begin{equation}\label{equation: Taylor-expansion-peak-section}
 		S(z)=\lambda_{m,p}\cdot\left(z_1^{p_1}\cdots z_n^{p_n}+O(|z|^{2r})\right)e_{L,z_0}^m\left(1+O\left(\frac{1}{m^{2r}}\right)\right),
	\end{equation}
	where $||\cdot||_{h^m}$ is the norm on $L^{\otimes m}$ given by $h^m$, and $O\left(\frac{1}{m^{2r}}\right)$ denotes a quantity dominated by $C/m^{2r}$ with the constant $C$ depending only on $r$ and the geometry of $X$, moreover
	\begin{equation}\label{equation: coefficient-leading-term-Taylor-expansion-peak-section}
 		\lambda^{-2}_{m,p}=\int_{\rho(z)\leq\log m/\sqrt{m}}|z_1^{p_1}\cdots z_n^{p_n}|^2\cdot e^{-m\cdot\varphi(z)} dV_g,
	\end{equation}
	where $dV_g=\det(g_{i\bar{j}})(\sqrt{-1}/(2\pi))^ndz_1\wedge d\bar{z}_1\wedge\cdots\wedge dz_n\wedge d\bar{z}_n$ is the volume form. 
\end{prop}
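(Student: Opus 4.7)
The plan is to construct $S$ by a standard Hörmander $L^2$-method: first write down a smooth, compactly supported section realizing the desired leading Taylor behavior at $z_0$, then correct it by solving a $\bar\partial$-equation with a carefully chosen singular weight so that the correction is both globally $L^2$-small and vanishes to high order at $z_0$.

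First, working in the $K$-coordinates with $K$-frame $e_{L,z_0}$ at $z_0$, I would fix a smooth bump function $\eta$ with $\eta \equiv 1$ on $\{\rho \leq (\log m)/(2\sqrt{m})\}$ and $\mathrm{supp}(\eta) \subset \{\rho \leq (\log m)/\sqrt{m}\}$, which lies in the coordinate chart for $m$ large. Define the approximate section
$$
\tilde{S} := \lambda_{m,p}\,\eta(z)\,z_1^{p_1}\cdots z_n^{p_n}\,e_{L,z_0}^m,
$$
with $\lambda_{m,p}$ given by \eqref{equation: coefficient-leading-term-Taylor-expansion-peak-section}. A standard Gaussian-integral asymptotic applied to \eqref{equation: coefficient-leading-term-Taylor-expansion-peak-section}, using the leading form \eqref{equation: Taylor-expansion-hermitian-metric} of $\varphi$, gives $\lambda_{m,p}^2 \sim c_{n,p}\,m^{n+|p|}$. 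Since $\bar\partial\tilde{S} = \lambda_{m,p}(\bar\partial\eta)\,z^{p}\,e_{L,z_0}^m$ is supported in the annulus where $\rho \geq (\log m)/(2\sqrt{m})$, the Gaussian factor $e^{-m\varphi(z)} \leq e^{-cm\rho^2}$ produces super-polynomial smallness, yielding $\|\bar\partial\tilde{S}\|_{L^2(h^m)} = O(m^{-N})$ for every $N > 0$.

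Second, I would solve $\bar\partial u = \bar\partial\tilde{S}$ using Hörmander's weighted $L^2$-estimate with a weight of the form $\Psi_m := m\varphi + 2(n+2r)\chi(z)\log|z|^2$, where $\chi$ is a smooth cutoff supported in the $K$-coordinate chart and equal to $1$ near $z_0$. For $m \geq m_0(r)$, the curvature $\sqrt{-1}\partial\bar\partial(m\varphi) = m\omega$ dominates the contribution from the derivatives of $\chi$ and any other non-positive term, so Hörmander's positivity hypothesis is met. The resulting estimate
$$
\int_X \|u\|_{h^m}^2\,|z|^{-2(n+2r)}\,dV_g \leq C\int_X \|\bar\partial\tilde{S}\|_{h^m}^2\,|z|^{-2(n+2r)}\,dV_g = O(m^{-N})
$$
forces $u$ to vanish at $z_0$ to order at least $2r+1$, and an elliptic upgrade of this $L^2$-bound (valid because $\bar\partial u = 0$ in the neighborhood $\{\eta \equiv 1\}$, so $u$ is holomorphic there) yields a pointwise Taylor estimate of the required order.

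Finally, set $S := C_m^{-1}(\tilde{S} - u)$ where $C_m = \|\tilde{S} - u\|_{L^2(h^m)} = 1 + O(m^{-N})$. Then $S$ is a global holomorphic section of $L^{\otimes m}$ with $\|S\|_{L^2(h^m)} = 1$; its local form \eqref{equation: Taylor-expansion-peak-section} near $z_0$ follows from the construction of $\tilde{S}$ together with the vanishing order of $u$, and the off-disk decay in \eqref{equation: property-peak-section-norm} comes from $\mathrm{supp}(\tilde{S}) \subset \{\rho \leq (\log m)/\sqrt{m}\}$ combined with the weighted $L^2$-bound on $u$. The main obstacle is balancing the singular weight $\Psi_m$: the singularity at $z_0$ must be strong enough to enforce the vanishing order $2r$ while the full weight must still be plurisubharmonic with the curvature lower bound Hörmander requires. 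Since $\log|z|^2$ is only weakly plurisubharmonic, one relies entirely on the prequantum curvature $m\omega$ to absorb every residual negative contribution, and this is exactly what produces the threshold $m_0 = m_0(r)$ above which the construction succeeds.
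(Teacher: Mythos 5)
Your construction is correct and is essentially Tian's own argument for his Lemma 1.2 (cut off a local monomial peak, solve $\bar\partial$ with Hörmander's weighted $L^2$-estimate using a weight with a logarithmic singularity at $z_0$ to force the correction to vanish to order $>2r$, then normalize); the paper itself offers no proof of this proposition, quoting it directly from \cite{Tian}. The only cosmetic remark is that the vanishing order of $u$ follows directly from finiteness of the singularly weighted $L^2$-integral applied to the Taylor expansion of the (locally holomorphic) $u$, so the ``elliptic upgrade'' is not really needed for the stated conclusion.
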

Here we use the same notation as in the introduction of $K$-frame: $h^m(e_{L,z_0},e_{L,z_0})=e^{-m\cdot\varphi(z)}.$ Geometrically, a peak section is, roughly speaking, a global holomorphic section of a high enough tensor power of $L$ whose norm is $1$ and is concentrated around a given point $z_0$ on the K\"ahler manifold. 


We want to define a section $S_{m,p,r}$ of the line bundle $L^{\otimes m}$ by normalizing the peak section $S(z)$ in Proposition \ref{proposition: peak-section} so that its Taylor expansion at $z_0$ under the K-frame $e_{L,z_0}^{\otimes m}$ is exactly equal to $z_1^{p_1}\cdots z_{n}^{p_n}$ up to order $2r-1$. This forces $S_{m,p,r}$ to be of the form:
\begin{equation}\label{equation: normalized-peak-section}
 S_{m,p,r}:=\lambda^{-1}_{m,p}\cdot \left(1+O\left(\frac{1}{m^{2r}}\right)\right)\cdot S(z).
\end{equation}
We now give an estimate of $\lambda^{-1}_{m,p}$. We have, for $m>>0$,
\begin{align*}
\lambda^{-2}_{m,p}
& = \int_{\rho(z)\leq\log m/\sqrt{m}}|z_1^{p_1}\cdots z_n^{p_n}|^2\cdot e^{-m\cdot\rho(z)}dV_g\\
& \leq \int_{\rho(z)\leq 1}|z_1^{p_1}\cdots z_n^{p_n}|^2\cdot e^{-m\cdot\rho(z)}dV_g=O\left(\frac{1}{m^{|p|+n}}\right),
\end{align*}
where the estimate follows from Theorem \ref{theorem: Feynman-Laplace-Gaussian-integral}. In particular, there is a constant $C_{p}$, depending only on the point $z_0$ and the multi-index $p$, such that
$$
\lambda^{-1}_{m,p}\cdot \left(1+O\left(\frac{1}{m^{2r}}\right)\right)\leq C_{p}\cdot\left(\frac{1}{m^{\frac{|p|+n}{2}}}\right).
$$
We define a normalized version of the inner product of sections of $L^{\otimes m}$: 
\begin{defn}
	Let $s_1, s_2$ be (smooth) sections of $L^{\otimes m}$. Their (normalized) inner product is defined as
	\begin{equation}\label{equation: normalized-inner-product-peak-sections}
		\langle s_1,s_2\rangle_{m}:=m^{n}\cdot\int_X h^m(s_1,s_2)dV_g,
 	\end{equation}
	where $n = \dim_\mathbb{C} X$, and we let $||s||_m$ be the norm of a section $s$ under this inner product. 
\end{defn}
\begin{rmk}
	An explanation of the normalization factor $m^n$ is the following: consider $\mathbb{C}^n$ with the volume form 
	$$
	\left(\frac{\sqrt{-1}}{2\pi}\right)^n e^{-m\cdot|z|^2}dz_1\wedge d\bar{z}_1\wedge\cdots\wedge dz_n\wedge d\bar{z}_n,
	$$
	then the factor $m^n$ normalizes the volume to $1$ under this volume form. 
\end{rmk}

We summarize the properties of  $S_{m,p,r}$ as follows:
\begin{equation}\label{equation: peak-section-norm-bounded}
||S_{m,p,r}||_m^2\leq m^n\cdot C\cdot\lambda_{m,p}^{-2}=O\left(\frac{1}{m^{|p|}}\right);
\end{equation}
\begin{equation}\label{equation: norm-peak-section-outside-disk}
\begin{aligned}
m^n\cdot\int_{M\setminus\{\rho(z)\leq1\}}||S_{m,p,r}||_{h^m}^2dV_g&\leq m^n\cdot\int_{M\setminus\{\rho(z)\leq\log m/\sqrt{m}\}}||S_{m,p,r}||_{h^m}^2dV_g\\
&\leq m^n\cdot C\cdot\lambda^{-2}_{m,p}\cdot O\left(\frac{1}{m^{2r}}\right) =O\left(\frac{1}{m^{2r+|p|}}\right).
\end{aligned}
\end{equation}
\begin{rmk}
	The constant $C$ in the above estimates comes from the number $1+O\left(\frac{1}{m^{2r}}\right)$ in equation \eqref{equation: normalized-peak-section}. 
\end{rmk}
Locally around $z_0$, we have
\begin{equation}\label{equation: peak-section-leading-term}
S_{m,p,r}(z)=\left(z_1^{p_1}\cdots z_n^{p_n}+O(|z|^{2r})\right)\cdot e_{L}^m.
\end{equation}

The first property \eqref{equation: peak-section-norm-bounded} implies that for fixed $p,r$, the sequence $\{S_{m,p,r}\}$ is bounded for all $m$. 
The second property \eqref{equation: norm-peak-section-outside-disk} is saying that the sections $S_{m,p,r}$ are asymptotically ``concentrated'' around the point $z_0$.
The third property \eqref{equation: peak-section-leading-term} is saying that asymptotically, $S_{m,p,r}$ has an assigned leading term of the Taylor expansion at the point $z_0$. 
\begin{rmk}
 	The third property of $S_{m,p,r}$ is the reason for calling it a {\em normalized peak section}:
 	its Taylor expansion at $z_0$ has leading term exactly exactly equal to the monomial $z^p\cdot e_{L}^m$ corresponding to the multi-index $p$.
\end{rmk}
\begin{rmk}
	According to \cite{Tian}, for every fixed $p,r$, peak sections exist only when $m$ is big enough. We will adopt the follows two conventions 
	\begin{itemize}
	 \item $S_{m,p,r}:=0$ for small $m$,
	 \item $S_{m,p,r}:=0$ if $r\leq |p|$.
	\end{itemize}

\end{rmk}

There is the following estimate of the inner product between peak sections:
\begin{lem}\label{lemma: inner-product-peak-section-asymptotic}
	Given two normalized peak sections $S_{m,p^1,r}, S_{m,p^2,r}$,
	we have the following asymptotic expansion of their inner product up to order $r$:
	\begin{equation}\label{equation: asymptotic-expansion-inner-product-peak-section}
	\langle S_{m,p^1,r}, S_{m,p^2,r} \rangle_m-\sum_{k=1}^{p'-1}a_k\cdot\frac{1}{m^k} = O\left(\frac{1}{m^{p'}}\right),
	\end{equation}
	where the coefficients $a_k$'s are the same as those in the expansion of the following formal integral:
	$$
	\int z^{p^1}\bar{z}^{p^2}e^{\frac{-|z|^2+\phi(z,\bar{z})}{\hbar}}=\sum_{k\geq 1}a_k\cdot\hbar^k.
	$$
	Thus, for fixed multi-indices $p^1,p^2$, the $a_i$'s are independent of $r>>0$. In particular, the leading term of the asymptotic expansion of $||S_{m,p,r}||_m^2$ is given by
	$$
	\left(\frac{1}{m}\right)^{|p|}p!,
	$$
	which is non-zero. 
\end{lem}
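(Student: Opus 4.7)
The strategy is to express the inner product as an integral over $X$, split it into a local piece near $z_0$ (inside the coordinate disk $\{\rho(z)\leq 1\}$) and a far-away piece, bound the far-away piece using the concentration estimate \eqref{equation: norm-peak-section-outside-disk}, and evaluate the local piece via the Feynman--Laplace asymptotics encoded in Theorem \ref{theorem: asymptotics-local}.

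First I would split
\begin{align*}
\langle S_{m,p^1,r},\,S_{m,p^2,r}\rangle_m
&= m^n\!\int_{\{\rho(z)\leq 1\}} h^m(S_{m,p^1,r},S_{m,p^2,r})\,dV_g\\
&\quad + m^n\!\int_{X\setminus\{\rho(z)\leq 1\}} h^m(S_{m,p^1,r},S_{m,p^2,r})\,dV_g,
\end{align*}
and control the second term by Cauchy--Schwarz together with \eqref{equation: norm-peak-section-outside-disk}, obtaining a bound of order $O(m^{-2r-(|p^1|+|p^2|)/2})$, which for $r$ large is absorbed into the remainder.

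For the local piece I would substitute the local Taylor description \eqref{equation: peak-section-leading-term}, writing $S_{m,p^i,r}(z) = (z^{p^i} + \varepsilon_i(z))\,e_L^m$ with $\varepsilon_i(z) = O(|z|^{2r})$, and expand the product. The dominant contribution is
$$m^n\!\int_{\{\rho(z)\leq 1\}} z^{p^1}\bar z^{p^2}\,e^{-m\varphi(z,\bar z)}\,dV_g.$$
Writing the volume form as $e^{\psi(z,\bar z)}\cdot(\text{flat form})$ and putting $\hbar=1/m$, Theorem \ref{theorem: asymptotics-local} identifies the asymptotic expansion of this integral with the formal integral of Definition \ref{definition: formal_integral} for $\phi = |z|^2-\varphi+\hbar\psi$. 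Since Feynman weights depend only on infinite jets of the data at $z_0$, the resulting coefficients $a_k$ are intrinsic and independent of $r$ provided $r$ is large enough to reach the desired order. The cross terms $z^{p^1}\bar\varepsilon_2$, $\varepsilon_1\bar z^{p^2}$ and $\varepsilon_1\bar\varepsilon_2$ integrate to $O(m^{-r-\cdots})$ by the same Feynman--Laplace theorem applied to integrands vanishing to high order at $z_0$, so for $r$ chosen appropriately relative to $p'$ they can be swept into the $O(1/m^{p'})$ error.

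For the leading term when $p^1 = p^2 = p$, I would compute the lowest-order contribution to the formal integral $\frac{1}{\hbar^n}\int y^p\bar y^p\,e^{(-|y|^2+\phi)/\hbar}$ directly: the full Wick contraction against the quadratic part $-|y|^2/\hbar$ produces $p!\,\hbar^{|p|}$, independent of $\phi$, so under $\hbar\mapsto 1/m$ we recover $(1/m)^{|p|}p!$, which is nonzero. The main technical obstacle is the bookkeeping of error orders: one has to verify that every contribution not coming from the pure Gaussian term $z^{p^1}\bar z^{p^2}e^{-m|z|^2}$ carries strictly more than $p'$ negative powers of $m$. This reduces to the standard observation (encoded in Corollary \ref{corollary: leading-term-inner-product-polynomial} and Lemma \ref{lemma: formal-integral-preserves-filtration}) that each vertex of $\phi/\hbar$ or $\varepsilon_i/\hbar^0$ contributes extra positive powers of $\hbar$ upon full contraction, together with the $2r$-th order vanishing of the $\varepsilon_i$'s.
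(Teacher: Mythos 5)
Your proposal is correct and follows essentially the same route as the paper's proof: split the inner product into the integral over $\{\rho(z)\leq 1\}$ and its complement, bound the latter by Cauchy--Schwarz together with \eqref{equation: norm-peak-section-outside-disk}, and apply the Feynman--Laplace expansion to the local piece to identify the coefficients with those of the formal integral. Your explicit handling of the $O(|z|^{2r})$ error terms in \eqref{equation: peak-section-leading-term} and the Wick-contraction computation of the leading term $p!/m^{|p|}$ are in fact more detailed than what the paper records.
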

\begin{proof}
We split the integral defining the inner product to two parts:
$$
\frac{1}{m^n}\int_{\{\rho(z)<1\}} h^m(S_{m,p^1,r}, S_{m,p^2,r})\cdot dV_{g}+\frac{1}{m^n}\int_{X\setminus\{\rho(z)<1\}} h^m(S_{m,p^1,r}, S_{m,p^2,r})\cdot dV_{g},
$$
where the second part is $O\left(\frac{1}{m^{2r+\frac{|p^1|}{2}+\frac{|p^2|}{2}}}\right)$ by using Cauchy-Schwarz inequality and equation \eqref{equation: norm-peak-section-outside-disk}. Thus to show equation \eqref{equation: asymptotic-expansion-inner-product-peak-section},  the integral outside the disk $\{\rho(z)<1\}$ can be ignored. For the integral over the disk, we can apply Theorem \ref{theorem: Feynman-Laplace-Gaussian-integral} to obtain the desired asymptotic expansion. In particular, the coefficients $a_k$'s are the same as those coming from the formal integral.
\end{proof}

As an immediate corollary, we have the following:
\begin{cor}
Let $p^1, p^2$ be multi-indices, and let $r>\max\{|p^1|,|p^2|\}$, then we have the following estimate of the inner product between $S_{m,p^1,r}$ and $S_{m,p^2,r}$:
$$\langle S_{m,p^1,r}, S_{m,p^2,r}\rangle_m = 
\begin{cases}
O\left(\frac{1}{m^{|p^1|}}\right),& p^1=p^2\\
o\left(\frac{1}{m^{\max\{|p^1|,|p^2|\}}}\right),& p^1 \neq p^2.
\end{cases}$$
\end{cor}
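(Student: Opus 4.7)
The corollary reduces immediately to an application of the formal integral estimates already established. The strategy is to use Lemma \ref{lemma: inner-product-peak-section-asymptotic} to convert the asymptotics of $\langle S_{m,p^1,r}, S_{m,p^2,r}\rangle_m$ as $m\to\infty$ into asymptotics of the formal Gaussian integral $\hbar^{-n}\int y^{p^1}\bar y^{p^2} e^{(-|y|^2+\phi)/\hbar}$ as $\hbar\to 0$ under the identification $\hbar = 1/m$, and then read off the order of vanishing from Corollary \ref{corollary: leading-term-inner-product-polynomial}. The perturbation $\phi$ here is the Taylor expansion at $z_0$ of $|z|^2-\varphi(z,\bar z) + \hbar\psi(z,\bar z)$, arising from the $K$-frame normalization of $L^{\otimes m}$ and the volume form.

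For the diagonal case $p^1 = p^2 =: p$, Lemma \ref{lemma: inner-product-peak-section-asymptotic} explicitly identifies the leading coefficient of the asymptotic expansion of $\|S_{m,p,r}\|_m^2$ as $p!/m^{|p|}$, which is nonzero; therefore $\langle S_{m,p,r}, S_{m,p,r}\rangle_m = O(1/m^{|p|})$ on the nose.

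For the off-diagonal case $p^1 \neq p^2$, I will check that the geometric $\phi$ satisfies the stronger hypothesis of Corollary \ref{corollary: leading-term-inner-product-polynomial}, namely that the $\hbar^0$-component $\phi_{0,I,J}$ vanishes whenever $|I|=1$ or $|J|=1$. This is immediate from the $K$-coordinate condition \eqref{equation: Taylor-expansion-hermitian-metric}: every monomial appearing in $|z|^2 - \varphi(z,\bar z)$ has both holomorphic and antiholomorphic multi-index of size at least $2$. Consequently the refined estimate gives
\[
\hbar^{-n}\int y^{p^1}\bar{y}^{p^2}\, e^{(-|y|^2+\phi)/\hbar} = o\bigl(\hbar^{\max\{|p^1|,|p^2|\}}\bigr),
\]
and translating back via Lemma \ref{lemma: inner-product-peak-section-asymptotic} yields $\langle S_{m,p^1,r}, S_{m,p^2,r}\rangle_m = o(1/m^{\max\{|p^1|,|p^2|\}})$, as claimed.

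There is no real obstacle here: the only substantive step is verifying that the geometric perturbation coming from $K$-coordinates falls into the class for which Corollary \ref{corollary: leading-term-inner-product-polynomial} gives the sharper $o$-estimate rather than the weaker $O$-estimate, and this is transparent from the very definition of $K$-coordinates.
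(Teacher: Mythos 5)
Your proof is correct and follows essentially the same route as the paper: the diagonal case comes from the known norm asymptotics of normalized peak sections, and the off-diagonal case reduces to the refined estimate of Corollary \ref{corollary: leading-term-inner-product-polynomial}, whose hypothesis on $\phi$ is exactly what the $K$-coordinate condition \eqref{equation: Taylor-expansion-hermitian-metric} supplies. The only cosmetic difference is that you route the reduction to the formal integral through Lemma \ref{lemma: inner-product-peak-section-asymptotic}, whereas the paper redoes the inside/outside-the-disk splitting (with Cauchy--Schwarz and \eqref{equation: norm-peak-section-outside-disk} for the outer part) directly in the corollary's proof; the two are interchangeable since $r>\max\{|p^1|,|p^2|\}$ makes the outer contribution negligible.
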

\begin{proof}
 	The case where $p_1=p_2$ is given by equation \eqref{equation: peak-section-norm-bounded}. For $p_1\not=p_2$, we need to estimate an integral. For the integral inside the disk $\{\rho(z)<1\}$, this estimate is given by Corollary \ref{corollary: leading-term-inner-product-polynomial} where the technical condition on $\phi$ is implied by the existence of $K$-coordinates and $K$-frame. For the estimate of the integral outside the disk, we use Cauchy-Schwarz inequality:
 	\begin{equation}\label{equation: estimiate-integral-outside-disk}
 		m^n\cdot\int_{X\setminus\{\rho(z)<1\}}h^m(S_{m,p_1,r}, S_{m,p_2,r})\cdot dV_g=O\left(\frac{1}{m^{\frac{4r+|p_1|+|p_2|}{2}}}\right)=o\left(\frac{1}{m^{\max\{|p_1|,|p_2|\}}}\right).
 	\end{equation}
\end{proof}

\begin{bibdiv}
\begin{biblist}

\bib{DQ-I}{article}{
	AUTHOR = {Bayen, F.},
	AUTHOR = {Flato, M.},
	AUTHOR = {Fronsdal, C.},
	AUTHOR = {Lichnerowicz, A.},
 	AUTHOR = {Sternheimer, D.},
	TITLE = {Deformation theory and quantization. {I}. {D}eformations of
		symplectic structures},
	JOURNAL = {Ann. Physics},
	FJOURNAL = {Annals of Physics},
	VOLUME = {111},
	YEAR = {1978},
	NUMBER = {1},
	PAGES = {61--110},
}

\bib{DQ-II}{article}{
	AUTHOR = {Bayen, F.},
	AUTHOR = {Flato, M.},
	AUTHOR = {Fronsdal, C.},
	AUTHOR = {Lichnerowicz, A.},
	AUTHOR = {Sternheimer, D.},
	TITLE = {Deformation theory and quantization. {II}. {P}hysical
		applications},
	JOURNAL = {Ann. Physics},
	FJOURNAL = {Annals of Physics},
	VOLUME = {111},
	YEAR = {1978},
	NUMBER = {1},
	PAGES = {111--151},
}

\bib{Bordemann-Meinrenken}{article}{
    AUTHOR = {Bordemann, M.},
    author = {Meinrenken, E.},
    author = {Schlichenmaier, M.},
     TITLE = {Toeplitz quantization of {K}\"{a}hler manifolds and {${\rm
              gl}(N)$}, {$N\to\infty$} limits},
   JOURNAL = {Comm. Math. Phys.},
    VOLUME = {165},
      YEAR = {1994},
    NUMBER = {2},
     PAGES = {281--296},
}
		
\bib{Bordemann}{article}{
	AUTHOR = {Bordemann, M.},
	author = {Waldmann, S.},
	TITLE = {A {F}edosov star product of the {W}ick type for {K}\"{a}hler
		manifolds},
	JOURNAL = {Lett. Math. Phys.},
	VOLUME = {41},
	YEAR = {1997},
	NUMBER = {3},
	PAGES = {243--253},
}
		
\bib{Bordemann-Waldmann}{article}{
	AUTHOR = {Bordemann, M.},
	AUTHOR = {Waldmann, S.},
	TITLE = {Formal {GNS} construction and states in deformation
		quantization},
	JOURNAL = {Comm. Math. Phys.},
	VOLUME = {195},
	YEAR = {1998},
	NUMBER = {3},
	PAGES = {549--583},
}

\bib{Rawnsley-Cahen-Gutt}{article}{
    author = {Cahen, M. },
    author = {Gutt, S.},
     AUTHOR = {Rawnsley, J.},
     TITLE = {Quantization of {K}\"{a}hler manifolds. {I}. {G}eometric
              interpretation of {B}erezin's quantization},
   JOURNAL = {J. Geom. Phys.},
    VOLUME = {7},
      YEAR = {1990},
    NUMBER = {1},
     PAGES = {45--62},
}

\bib{CLL-PartII}{article}{
	AUTHOR = {Chan, K.},
	AUTHOR = {Leung, N. C.},
	AUTHOR = {Li, Q.},
	TITLE = {Bargmann-{F}ock sheaves on {K}\"{a}hler manifolds},
	JOURNAL = {Comm. Math. Phys.},
	VOLUME = {388},
	YEAR = {2021},
	NUMBER = {3},
	PAGES = {1297--1322},
}



\bib{Kevin-book}{book}{
   author={Costello, K.},
   title={Renormalization and effective field theory},
   series={Mathematical Surveys and Monographs},
   volume={170},
   publisher={American Mathematical Society},
   place={Providence, RI},
   date={2011},
   pages={viii+251},
   isbn={978-0-8218-5288-0},
}

\bib{Kevin-CS}{article}{
   author={Costello, K.},
   title={A geometric construction of the Witten genus, II},
   eprint={arXiv:1111.4234 [math.QA]},
}

\bib{Donaldson}{incollection}{
	AUTHOR = {Donaldson, S. K.},
	TITLE = {Planck's constant in complex and almost-complex geometry},
	BOOKTITLE = {X{III}th {I}nternational {C}ongress on {M}athematical
		{P}hysics ({L}ondon, 2000)},
	PAGES = {63--72},
	PUBLISHER = {Int. Press, Boston, MA},
	YEAR = {2001},
}

\bib{Fed}{article}{
    AUTHOR = {Fedosov, B. V.},
     TITLE = {A simple geometrical construction of deformation quantization},
   JOURNAL = {J. Differential Geom.},
    VOLUME = {40},
      YEAR = {1994},
    NUMBER = {2},
     PAGES = {213--238}
}

\bib{Fedbook}{book}{
    AUTHOR = {Fedosov, B. V.},
     TITLE = {Deformation quantization and index theory},
    SERIES = {Mathematical Topics},
    VOLUME = {9},
 PUBLISHER = {Akademie Verlag, Berlin},
      YEAR = {1996},
     PAGES = {325},
}

\bib{Kapranov}{article}{
	AUTHOR = {Kapranov, M.},
	TITLE = {Rozansky-{W}itten invariants via {A}tiyah classes},
	JOURNAL = {Compositio Math.},
	VOLUME = {115},
	YEAR = {1999},
	NUMBER = {1},
	PAGES = {71--113},
}

\bib{Karabegov96}{article}{
    AUTHOR = {Karabegov, A.V.},
     TITLE = {Deformation quantizations with separation of variables on a
              {K}\"{a}hler manifold},
   JOURNAL = {Comm. Math. Phys.},
    VOLUME = {180},
      YEAR = {1996},
    NUMBER = {3},
     PAGES = {745--755},
}

\bib{Karabegov00}{incollection}{
    AUTHOR = {Karabegov, A.V.},
     TITLE = {On {F}edosov's approach to deformation quantization with
              separation of variables},
 BOOKTITLE = {Conf\'{e}rence {M}osh\'{e} {F}lato 1999, {V}ol. {II} ({D}ijon)},
    SERIES = {Math. Phys. Stud.},
    VOLUME = {22},
     PAGES = {167--176},
 PUBLISHER = {Kluwer Acad. Publ., Dordrecht},
      YEAR = {2000},
}

\bib{Karabegov07}{article}{
    AUTHOR = {Karabegov, A.V.},
     TITLE = {A formal model of {B}erezin-{T}oeplitz quantization},
   JOURNAL = {Comm. Math. Phys.},
    VOLUME = {274},
      YEAR = {2007},
    NUMBER = {3},
     PAGES = {659--689},
}

\bib{Karabegov19}{article}{
    AUTHOR = {Karabegov, A.V.},
     TITLE = {Formal oscillatory integrals and deformation quantization},
   JOURNAL = {Lett. Math. Phys.},
    VOLUME = {109},
      YEAR = {2019},
    NUMBER = {8},
     PAGES = {1907--1937},
}

\bib{Karabegov}{article}{
    AUTHOR = {Karabegov, A.V.},
    author = {Schlichenmaier, M.},
     TITLE = {Identification of {B}erezin-{T}oeplitz deformation
              quantization},
   JOURNAL = {J. Reine Angew. Math.},
    VOLUME = {540},
      YEAR = {2001},
     PAGES = {49--76},
}

\bib{Kirillov}{incollection}{
	AUTHOR = {Kirillov, A. A.},
	TITLE = {Geometric quantization},
	BOOKTITLE = {Current problems in mathematics. {F}undamental directions,
		{V}ol. 4},
	SERIES = {Itogi Nauki i Tekhniki},
	PAGES = {141--178, 291},
	PUBLISHER = {Akad. Nauk SSSR, Vsesoyuz. Inst. Nauchn. i Tekhn. Inform.,
		Moscow},
	YEAR = {1985},
}

\bib{Kontsevich}{article}{
    AUTHOR = {Kontsevich, M.},
     TITLE = {Feynman diagrams and low-dimensional topology},
 BOOKTITLE = {First {E}uropean {C}ongress of {M}athematics, {V}ol.\ {II}
              ({P}aris, 1992)},
    SERIES = {Progr. Math.},
    VOLUME = {120},
     PAGES = {97--121},
 PUBLISHER = {Birkh\"auser, Basel},
      YEAR = {1994},
}

\bib{Kontsevich-DQ}{article}{
	AUTHOR = {Kontsevich, M.},
	TITLE = {Deformation quantization of {P}oisson manifolds},
	JOURNAL = {Lett. Math. Phys.},
	FJOURNAL = {Letters in Mathematical Physics},
	VOLUME = {66},
	YEAR = {2003},
	NUMBER = {3},
	PAGES = {157--216},
}

\bib{Kostant}{incollection}{
	AUTHOR = {Kostant, B.},
	TITLE = {Quantization and unitary representations. {I}.
		{P}requantization},
	BOOKTITLE = {Lectures in modern analysis and applications, {III}},
	PAGES = {87--208. Lecture Notes in Math., Vol. 170},
	YEAR = {1970},
}

\bib{Lu}{article}{
    AUTHOR = {Lu, Z.},
    AUTHOR = {Shiffman, B.},
     TITLE = {Asymptotic expansion of the off-diagonal {B}ergman kernel on
              compact {K}\"{a}hler manifolds},
   JOURNAL = {J. Geom. Anal.},
    VOLUME = {25},
      YEAR = {2015},
    NUMBER = {2},
     PAGES = {761--782},
}

\bib{Ma-Ma-1}{article}{
    AUTHOR = {Ma, X.},
    author = {Marinescu, G.},
     TITLE = {Toeplitz operators on symplectic manifolds},
   JOURNAL = {J. Geom. Anal.},
    VOLUME = {18},
      YEAR = {2008},
    NUMBER = {2},
     PAGES = {565--611},
}

\bib{Ma-Ma}{article}{
    AUTHOR = {Ma, X.},
    author = {Marinescu, G.},
     TITLE = {Berezin-{T}oeplitz quantization on {K}\"{a}hler manifolds},
   JOURNAL = {J. Reine Angew. Math.},
    VOLUME = {662},
      YEAR = {2012},
     PAGES = {1--56},

}

\bib{Mnev}{article}{
   author={Mnev, P.},
   title={Lectures on Batalin-Vilkovisky formalism and its applications in topological quantum field theory},
   eprint={arXiv:1707.08096 [math-ph]},
}

\bib{Neumaier}{article}{
    AUTHOR = {Neumaier, N.},
     TITLE = {Universality of {F}edosov's construction for star products of
              {W}ick type on pseudo-{K}\"{a}hler manifolds},
   JOURNAL = {Rep. Math. Phys.},
    VOLUME = {52},
      YEAR = {2003},
    NUMBER = {1},
     PAGES = {43--80},
}

\bib{Reshetikhin-Takhtajan}{incollection}{
	AUTHOR = {Reshetikhin, N.},
	AUTHOR = {Takhtajan, L. A.},
	TITLE = {Deformation quantization of {K}\"{a}hler manifolds},
	BOOKTITLE = {L. {D}. {F}addeev's {S}eminar on {M}athematical {P}hysics},
	SERIES = {Amer. Math. Soc. Transl. Ser. 2},
	VOLUME = {201},
	PAGES = {257--276},
	PUBLISHER = {Amer. Math. Soc., Providence, RI},
	YEAR = {2000},
}

\bib{Schlichenmaier}{article}{
	AUTHOR = {Schlichenmaier, M.},
	TITLE = {Deformation quantization of compact K\"ahler manifolds by Berezin-Toeplitz quantization},
	BOOKTITLE = {Conf\'erence Mosh\'e Flato 1999, Vol. II (Dijon)},
	SERIES = {Math. Phys. Stud.},
	VOLUME = {22},
	PAGES = {289–306},
	PUBLISHER = {Kluwer Acad. Publ., Dordrecht},
	YEAR = {2000},
}

\bib{Sniatycki}{book}{
	AUTHOR = {\'{S}niatycki, J.},
	TITLE = {Geometric quantization and quantum mechanics},
	SERIES = {Applied Mathematical Sciences},
	VOLUME = {30},
	PUBLISHER = {Springer-Verlag, New York-Berlin},
	YEAR = {1980},
	PAGES = {ix+230},
}

\bib{Souriau}{book}{
	AUTHOR = {Souriau, J.-M.},
	TITLE = {Structure des syst\`emes dynamiques},
	SERIES = {Ma\^{\i}trises de math\'{e}matiques},
	PUBLISHER = {Dunod, Paris},
	YEAR = {1970},
	PAGES = {xxxii+414},
}

\bib{Tian}{article}{
    AUTHOR = {Tian, G.},
     TITLE = {On a set of polarized {K}\"{a}hler metrics on algebraic manifolds},
   JOURNAL = {J. Differential Geom.},
    VOLUME = {32},
      YEAR = {1990},
    NUMBER = {1},
     PAGES = {99--130},
}

\bib{Woodhouse}{book}{
	AUTHOR = {Woodhouse, N. M. J.},
	TITLE = {Geometric quantization},
	SERIES = {Oxford Mathematical Monographs},
	EDITION = {Second},
	NOTE = {Oxford Science Publications},
	PUBLISHER = {The Clarendon Press, Oxford University Press, New York},
	YEAR = {1992},
	PAGES = {xii+307},
}

\bib{Zelditch}{article}{
	AUTHOR = {Zelditch, S.},
	TITLE = {Szego kernels and a theorem of {T}ian},
	JOURNAL = {Internat. Math. Res. Notices},
	YEAR = {1998},
	NUMBER = {6},
	PAGES = {317--331},
}

\end{biblist}
\end{bibdiv}

\end{document}